\documentclass{article}

\usepackage{microtype}
\usepackage{graphicx}
\usepackage{subfigure}
\usepackage{booktabs} 

\usepackage{hyperref}

\usepackage[preprint]{icml2026}

\usepackage{amsmath}
\usepackage{amssymb}
\usepackage{mathtools}
\usepackage{amsthm}

\usepackage[capitalize,noabbrev]{cleveref}

\theoremstyle{plain}
\newtheorem{theorem}{Theorem}[section]
\newtheorem{proposition}[theorem]{Proposition}
\newtheorem{lemma}[theorem]{Lemma}

\theoremstyle{definition}

\theoremstyle{remark}

\usepackage[textsize=tiny]{todonotes}

\icmltitlerunning{A Spectral Boundary for Independence Learnability}

\begin{document}

\twocolumn[
\icmltitle{A Spectral Boundary for Independence Learnability}



\icmlsetsymbol{equal}{*}

\begin{icmlauthorlist}
\icmlauthor{Senhan Yao}{amss,ucas}
\icmlauthor{Yuxin Zhou}{amss,ucas}
\end{icmlauthorlist}

\icmlaffiliation{amss}{State Key Laboratory of Mathematical Sciences, Academy of Mathematics and Systems Science, Chinese
Academy of Sciences, Beijing, China}
\icmlaffiliation{ucas}{School of Mathematical Sciences, University of Chinese Academy of Sciences, Beijing, China}

\icmlcorrespondingauthor{Senhan Yao}{yaosenhan@amss.ac.cn}

\icmlkeywords{Machine Learning, ICML}

\vskip 0.3in
]



\begin{NoHyper}
\printAffiliationsAndNotice{}
\end{NoHyper}

\begin{abstract}
Learning independence is fundamental to statistical inference and machine learning, yet learning independence between entire time series by a test remains challenging when only a single observation record is available. Existing guarantees often rely on structural or mixing assumptions, while stationarity and ergodicity alone do not permit consistent testing. In this work, we identify a class-level boundary governed by shared nontrivial Koopman eigenvalues, which represent common persistent oscillatory frequencies of the two processes. When no such eigenvalues are shared, we extend shift-based rank calibration with finite-window dependence witnesses and a data-adaptive threshold to construct a deterministic, weakly consistent test with finite-sample false-positive control, without quantitative mixing assumptions. Conversely, we show that no possibly randomized test can achieve vanishing false-positive probabilities under every independent jointly totally ergodic law while consistently detecting every dependent jointly totally ergodic law with a shared nontrivial eigenvalue. This impossibility already holds for binary observations and persists within any prescribed nonempty open eigenvalue window, identifying a spectral obstruction to universal learning of full-process independence.
\end{abstract}

\section{Introduction}
\label{sec:introduction}

Independence is a central concept in probability, statistics, and machine learning, underpinning many approaches to sampling, identification, and inference \citep{kac1985enigmas,hinton2002training,wasserman2004all,imbens2004nonparametric, bishop2006pattern,durrett2019probability}. Independence is also a practical objective in engineering \citep{benner2005synthetic,goodfellow2016deep,sastry2019escherichia}. Yet independence is often assumed rather than empirically established, even though unrecognized dependence can invalidate conclusions drawn from these approaches \citep{kruskal1988miracles,berk2003statistical,bengio2004unbiased}. Determining when observations can justify independence is therefore a fundamental problem. This raises several questions: How broadly can independence be learned consistently? Under which assumptions is such learning possible? Under which is it impossible? Where does the boundary between the two lie?

For time-series observations, the property of interest in practice is independence of the two entire processes \citep{bach2004learning,tank2015bayesian}. To learn whether they are independent, we need a test whose chance of giving the wrong answer approaches zero as more observations become available \citep{gretton2010consistent}. However, existing theoretical analyses often address more tractable surrogate conditions, such as zero cross-correlations or pairwise independence at finitely many time lags, or even at all lags \citep{hong1996testing,shao2009generalized,besserve2013statistical,chwialkowski2014kernel}. These conditions are generally much weaker than full-process independence. Consequently, we can learn its surrogate target while still failing to detect process-level dependence \citep{shao2009generalized,besserve2013statistical,chwialkowski2014kernel,bagnato2018testing}.

A natural way to bridge this gap is to impose model assumptions under which tractable surrogate conditions characterize full-process independence. Examples include jointly Gaussian models, specified graphical dependence structures, and parametric models \citep{bach2004learning,zhang2008kernel,shao2009generalized,besserve2013statistical}. Any conclusion of independence drawn within such a model, however, is limited to the assumed class and may fail if the true generating mechanism falls outside it.

An alternative approach is to restrict dependence across observation times, for example through i.i.d. sampling or mixing assumptions \citep{gretton2008kernel,gretton2010consistent,chwialkowski2014kernel,chwialkowski2014wild,rubenstein2016kernel,davis2018applications,pfister2018kernel,chu2023distance}. In engineering applications, however, signals are often recorded during system operation, while independently repeating the experiment under the same conditions may be impractical \citep{ljung1987system,tank2015bayesian,simchowitz2018learning,fattahi2019learning}. This motivates studying one increasingly long recording of the two signals. The observation scheme alone does not justify assuming independence across time or a particular form or rate of decay of temporal dependence. Moreover, guarantees under specific temporal assumptions do not by themselves identify the general boundary deciding learnability.

We therefore seek general statistical conditions governing this boundary, without specifying the generating mechanisms or probability distributions of the signals. We adopt the broad nonparametric framework of jointly stationary ergodic processes. Joint stationarity means that the joint law is invariant under time shifts, while ergodicity ensures that time averages of integrable observables converge almost surely to their expectations under that law. These assumptions accommodate a broad range of signal-generating mechanisms without prescribing a parametric model or a quantitative rate of temporal decorrelation.

Nevertheless, stationarity and ergodicity alone do not suffice for consistent testing of independence. \citet{ryabko2017independence} showed that no test can keep the false-positive probability below a prescribed level at every sample size while almost surely detecting every dependent law eventually. Even the weaker requirement that both error probabilities tend to zero is impossible in general, including when both processes are binary-valued \citep{yao2026independence}.

Two possible responses are to impose additional assumptions on the processes or to relax the performance required of the test. \citet{fernandes2010entropy} pursue the former direction through mixing and regularity conditions. \citet{harris2020shift} and \citet{yuan2024rigorous} provide finite-sample false-positive control under stationarity of one coordinate, without establishing a general guarantee of consistent detection. These two directions motivate our question: which additional assumptions permit a weakly consistent test of full-process independence?

We formulate our assumption through the shared nontrivial \emph{Koopman eigenvalues}, which represent common persistent oscillatory frequencies of the coordinate processes. Within the jointly stationary ergodic class, the absence of such shared eigenvalues is necessary for independence, although it does not itself imply independence. This concept enables us to give a class-level boundary of the independence testability:
\begin{itemize}
    \item The \emph{absence of shared nontrivial Koopman eigenvalues} suffices for weakly consistent testing of full-process independence, with finite-sample false-positive control and no quantitative mixing assumptions. This regime includes cases in which either coordinate is weakly mixing, while also allowing both coordinates to be nonmixing. We construct an \emph{explicit deterministic testing algorithm} for this regime by extending the shift-test framework of \citet{harris2020shift} and \citet{yuan2024rigorous}, combining their finite-sample rank calibration with a growing family of finite-window dependence witnesses and a data-adaptive threshold.

    \item No possibly randomized test can have vanishing false-positive probability under every independent jointly totally ergodic law while consistently detecting every dependent jointly totally ergodic law with \emph{a shared nontrivial Koopman eigenvalue}. Indeed, for a broad class of independent laws, any such test falsely declares dependence with probability arbitrarily close to one, even at arbitrarily large sample sizes. The obstruction persists within any prescribed nonempty open frequency window, \emph{however narrow}.
\end{itemize}

The remainder of this paper is organized as follows. Section~\ref{sec:preliminaries} introduces needed concepts. Section~\ref{sec:main_results} presents the main results. Section~\ref{sec:positive_regime} proves Theorem~\ref{thm:knr-testability}. Section~5 proves Theorem~\ref{thm:te-impossibility}. Finally, concluding remarks and discussions will be given in Section~\ref{sec:discussion}.

\section{Preliminaries}
\label{sec:preliminaries}

\paragraph{Statistical testing.} For integers \(a\leq b\), we call \([a,b]\) a window of length \(b-a+1\) and \(Z_{a:b}\) the corresponding observed block. The aim of a statistical testing problem is to use an observed record \(Z_{1:n}\) to decide whether to reject the null hypothesis \(\mathcal{H}_0\) in favor of the alternative hypothesis \(\mathcal{H}_1\). A deterministic test \(\varphi_n\) is a measurable function of \(Z_{1:n}\) taking values in \(\{0,1\}\), where one means rejection and zero means non-rejection. A randomized test may take values in \([0,1]\): after observing \(Z_{1:n}\), it rejects with probability \(\varphi_n(Z_{1:n})\), using additional randomness independent of the observations. Under a law \(P\), the unconditional rejection probability is \(\mathbb E_P[\varphi_n]\). Rejecting a true null is a Type~I error; failing to reject under an alternative is a Type~II error. A test has finite-sample level \(\alpha_0\) if its Type~I error is at most \(\alpha_0\) for every null law. The power of a test is the rejection probability under an alternative. A test sequence \((\varphi_n)\) is pointwise weakly consistent if \(\mathbb{E}_P[\varphi_n]\to0, P\in\mathcal H_0\) and \(\mathbb{E}_P[\varphi_n]\to1, P\in\mathcal H_1.\)

\paragraph{Ergodic theory.} We consider processes indexed by \(\mathbb Z\), taking values in standard Borel spaces. Their path spaces carry the product \(\sigma\)-algebra. Write \(P\) for a process law and \(T\) for the left shift \((Tx)_t=x_{t+1}\). The law \(P\) is called stationary if \(P\circ T^{-1}=P\). A stationary process law together with its shift defines a probability-preserving system: applying \(T\) preserves the probability measure \(P\). A stationary law is called ergodic if every measurable event \(A\) satisfying \(P(T^{-1}A\triangle A)=0\) has probability zero or one, where \(\triangle\) denotes symmetric difference. More generally, a measurable observable \(X_0\) on a probability-preserving system generates a stationary process through \(X_t=X_0\circ T^t\). Throughout, transformations are invertible with measurable inverses, as are two-sided shifts. Let \(L^2(P)\) denote the Hilbert space with inner product \(\langle f,g\rangle=\mathbb E_P[f\overline g]\). The Koopman operator is defined by \(Uf=f\circ T\). It is unitary for an invertible probability-preserving transformation, so its eigenvalue satisfies \(|\lambda|=1\) \citep[Proposition~7.18(b)]{eisner2015operator}. The point spectrum is the set of eigenvalues \citep[Appendix~C.9, p.~519]{eisner2015operator}. Ergodicity is equivalent to the \(1\)-eigenspace consisting only of constants \citep[Proposition~7.15]{eisner2015operator}. The Kronecker factor \(\mathcal K\) is the \(P\)-completed sub-\(\sigma\)-field generated by all Koopman eigenfunctions, with associated restricted measure and transformation.

\paragraph{Hierarchy under stationarity.} A stationary law is weakly mixing if its Cartesian square is ergodic under \(P\otimes P\) \citep[Theorem~9.23]{eisner2015operator}. Equivalently, its Koopman operator has no nonconstant eigenfunctions \citep[Theorem~9.25]{eisner2015operator}, or its Kronecker factor is trivial \citep[Proposition~17.16]{eisner2015operator}. It is totally ergodic (TE) if \(T^m\) is ergodic for every integer \(m\geq1\). For an ergodic system, total ergodicity is equivalent to the absence of eigenvalues other than \(1\) that are roots of unity \citep[Chapter~18, Exercise~17, p.~402]{eisner2015operator}. Thus, weak mixing \(\Rightarrow\) TE \(\Rightarrow\) ergodicity. Under stationarity, weak mixing is strictly weaker than commonly used assumptions such as \(\alpha\)-, \(\beta\)- and \(\phi\)-mixing \citep{Chacon1969,bradley2005mixing}. Denote \(\mathcal E_{\mathrm{TE}}\) as the class of jointly totally ergodic laws.

\paragraph{Joint law and product law.} For a joint process law \(P\), write \(P_X,P_Y\) for its marginal process laws, \(T_X,T_Y\) for their shifts, and \(U_X,U_Y\) for their Koopman operators. Let \(\mathcal S\) denote the class of joint laws invariant under the simultaneous shift \(T=T_X\times T_Y\), and let \(\mathcal E\subseteq\mathcal S\) denote its ergodic subclass. Their point spectra are denoted by \(\Sigma_X(P)\) and \(\Sigma_Y(P)\). The product measure \(Q=P_X\otimes P_Y\) defines the independent product system, whose Koopman operator is \(U_X\otimes U_Y\). Note that joint ergodicity implies marginal ergodicity \citep[Proposition, p.~233]{eisner2015operator}. Under marginal ergodicity, the product system is ergodic exactly when \(\Sigma_X(P)\cap\Sigma_Y(P)=\{1\}\), so shared nontrivial eigenvalues obstruct product ergodicity \citep[Theorem~12(2)]{Quas2012}. 

\paragraph{Geometry and topology.} Write \(\mathbb S^1 \cong \mathbb{R}/\mathbb{Z}\) for the unit circle and \(\mathbb T= \mathbb S^1 \times \mathbb S^1\) for the torus, carrying their usual topology and Borel \(\sigma\)-algebra. We equip the space of stationary laws with the weak topology. A subset of a topological space is nowhere dense if its closure has empty interior, and meagre if it is a countable union of nowhere dense sets. A set is residual if its complement is meagre. In a Baire space, every residual set is dense \citep[Lemma~48.1, p.~296]{Munkres2000}. 

\section{Main Results}
\label{sec:main_results}

For simplicity of exposition, we consider binary-valued processes. The positive result extends directly to standard Borel observation spaces, while the impossibility of weakly consistent testing already arises in the binary case.

Suppose we observe a single record \(Z_{1:n}\) from a binary pair process \(Z_t=(X_t,Y_t)\in\{0,1\}^2\), \(t\in\mathbb Z\). Its joint law \(P\), assumed stationary and ergodic, is defined on \(\Omega=(\{0,1\}^2)^{\mathbb Z}\), equipped with the product \(\sigma\)-algebra. Let \(\mathcal E\) denote the class of all such laws. For a subclass \(\mathcal C\subseteq\mathcal E\), specified in each result, we consider
\begin{equation}
\mathcal H_0:=\{P\in\mathcal C:P=P_X\otimes P_Y\},
\qquad
\mathcal H_1:=\mathcal C\setminus\mathcal H_0.
\end{equation}
The null requires independence of the two entire coordinate processes. Our objective is to determine which admissible classes \(\mathcal C\) permit consistent testability of process-level independence, meaning the existence of a pointwise weakly consistent test sequence.

To describe the class-level boundary, we introduce the Kronecker-nonresonant (KNR) and Kronecker-resonant (KR) classes:
\begin{equation}
\begin{aligned}
\mathcal C_{\mathrm{KNR}}
&:=\{P\in\mathcal E:\Sigma_X(P)\cap\Sigma_Y(P)=\{1\}\},\\
\mathcal C_{\mathrm{KR}}
&:=\{P\in\mathcal E:\Sigma_X(P)\cap\Sigma_Y(P)\neq\{1\}\}.
\end{aligned}
\end{equation}

Our first result shows that excluding shared nontrivial eigenvalues suffices for consistent testing, without imposing quantitative mixing assumptions.

\begin{theorem}[Consistent testability under KNR]
\label{thm:knr-testability}
For every \(\alpha_0\in(0,1)\), there exists a deterministic test sequence \((\varphi_n)\) that is pointwise weakly consistent on \(\mathcal C_{\mathrm{KNR}}\). Moreover, every test in this sequence has finite-sample level \(\alpha_0\).
\end{theorem}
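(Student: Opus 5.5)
The construction is a shift test in the style of \citet{harris2020shift} and \citet{yuan2024rigorous}, made consistent by letting both the dependence statistic and the rejection threshold grow with \(n\).

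\textbf{Level control.} Split the record into \(X_{1:n}\) and \(Y_{1:n}\). Choose \(m=m_n\to\infty\) slowly, together with a slowly growing set of shifts \(s_1,\dots,s_{K_n}\) spaced at least \(m_n\) apart. For each shift \(s\), pair \(X\) with the cyclically (or truncated) shifted \(Y\). Compute a statistic \(D_n(s)\) from these pairs, and compare \(D_n(0)\) with the others through its rank. Under \(\mathcal H_0\), \(X\) is independent of \(Y\) and \(Y\) is stationary, so the shifted copies of \(Y\) have the same law jointly with \(X\) as the unshifted one. The rank argument of \citet{yuan2024rigorous} (valid with only stationarity of one coordinate) then gives finite-sample level \(\alpha_0\) for the rule ``reject iff the rank of \(D_n(0)\) is among the top \(\lfloor \alpha_0 (K_n+1)\rfloor\)''. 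I would use that result, including its correction for truncation, as is.

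\textbf{The statistic.} Take \(D_n(s)\) to be a finite-window dependence witness: a weighted sum over window lengths \(w\le m_n\) of total-variation-type discrepancies. Each discrepancy compares the empirical joint frequency of block pairs \((x_{t:t+w-1},y_{t+s:t+s+w-1})\) with the product of the marginal empirical frequencies. Binary blocks on finite windows determine the law, so \(P\neq P_X\otimes P_Y\) implies that some fixed window \(w^\*\) has a positive discrepancy \(\delta>0\) at shift \(0\). The ergodic theorem gives \(D_n(0)\to\) a limit at least \(c\delta>0\) almost surely, provided \(m_n\) grows slowly enough that the block frequencies up to length \(m_n\) converge uniformly. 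A diagonal argument with ergodicity supplies this.

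\textbf{The hard step: consistency.} Under the alternative, \(D_n(s_k)\) for large shifts must be smaller than \(D_n(0)\) for most \(k\). This is where KNR enters. The product system \(P_X\otimes P_Y\) is ergodic by \citep[Theorem~12(2)]{Quas2012}, so its Koopman operator \(U_X\otimes U_Y\) has no invariant vectors beyond constants. By the mean ergodic theorem, Cesàro averages over shifts \(s\) of the joint law of \((X_{\cdot},Y_{\cdot+s})\) on windows converge to the product law restricted to windows. More concretely, for cylinder functions \(f,g\),
\[
\frac1K\sum_{k<K}\Big(\mathbb E_P[f(X)g(Y\circ T^k)]-\mathbb E f\,\mathbb E g\Big)^2\to0 ,
\]
which is a Wiener-type statement: the cross-correlation spectral measure has no common atoms, because shared atoms would be shared eigenvalues. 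So along a density-one set of shifts the population discrepancy is near zero.

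It remains to pass from population quantities to the empirical \(D_n(s_k)\) uniformly over the chosen shifts. I would use the ergodic theorem for the joint system \((X,Y)\) under \(T\): with \(s_k\) growing slowly, each shifted empirical frequency is a time average of a fixed-window function along the orbit, shifted by a bounded amount relative to \(n\). Convergence then holds in probability uniformly over \(K_n\) shifts if \(K_n\) and \(s_{K_n}\) grow slowly enough, again chosen by diagonalization. The data-adaptive threshold handles the need for the fraction of ``small'' shifts to tend to one. Reject only if \(D_n(0)\) exceeds the empirical \((1-\alpha_0)\)-quantile of the shifted values. Rank ties under the null are broken conservatively, so the level is kept.

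The main obstacle is this last uniformity: choosing \(m_n,K_n,s_k\) deterministically without mixing rates. Pointwise consistency only needs existence for each fixed \(P\). So I would pick sequences growing slower than any given rate via a countable dense family of cylinder witnesses and Egorov-type truncation, and accept a non-quantitative choice.
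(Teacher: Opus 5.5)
\textbf{Alternative side.} Your population step is sound. Under KNR, the cross-spectral measure of the centered lifted features $f(X)$, $g(Y)$ has no atoms. An atom at $\lambda$ would force nonzero $\lambda$-eigenfunctions in both coordinate $\sigma$-fields, and $\lambda=1$ is ruled out by ergodicity and centering. So the Ces\`aro mean of $|c(k)|^2$ does vanish.

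The gap is the transfer to the empirical statistic. For a growing shift $s$, the summand $f(X_{t:t+w-1})\,g(Y_{t+s:t+s+w-1})$ is a different observable for each $s$. It is not a fixed-window function shifted by a bounded amount, so the ergodic theorem gives no uniformity over $|s|\le s_{K_n}$. The admissible growth of $K_n$ and $s_{K_n}$ would depend on unknown, $P$-specific convergence rates.

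Your remark that pointwise consistency ``only needs existence for each fixed $P$'' conflates pointwise consistency with a $P$-tuned test. One deterministic sequence $(\varphi_n)$ must be fixed first and then work at every $P$ in an uncountable class with no common slowest admissible rate. Diagonalizing over a countable family of cylinder witnesses does nothing about this $P$-dependence. Spacing the shifts $m_n\to\infty$ apart also destroys the ordinary Birkhoff average over shifts. (For $D_n(0)$ alone no tuning is needed: with summable weights, truncation gives the limit for any $m_n\to\infty$.)

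The paper never compares empirical and population covariances at large shifts. It uses all consecutive shifts $s\in\{-N,\dots,N\}$ and bounds $p_NV_N(0)\le S_N$, so only the \emph{average} shifted score matters. It then proves $\mathbb E_PS_N\to0$ in three steps:
\begin{enumerate}
\item the block bound $\mathbb E_P|A_N(s)|^2\le\mathbb E_P|B_H(s)|^2+(H-1)/(2N)$, uniform in $s$;
\item with $H$ fixed, a Birkhoff average over $s$ giving the limit $\|H^{-1}\sum_{t<H}(U_X\otimes U_Y)^t(f\otimes g)\|_{L^2(Q)}^2$;
\item $H\to\infty$ via the mean ergodic theorem for the product system, which is ergodic under KNR.
\end{enumerate}
This controls bias and sampling fluctuation at large shifts together, with no rates.

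\textbf{Null side.} The null half of weak consistency is also missing. Your final rule rejects when $D_n(0)$ exceeds the empirical $(1-\alpha_0)$-quantile of the shifted values, so it has a fixed level. Nothing drives its Type~I error to zero; when the shifted statistics are close to exchangeable, as for independent i.i.d.\ coordinates, it stays of order $\alpha_0$. The theorem, however, requires $\mathbb E_P\varphi_n\to0$ for every null $P$.

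A deterministic level $\alpha_n\downarrow0$ would repair the null side. But power would then require $p_N$ to fall below $\alpha_n$ under alternatives, which again needs a rate; $p_N\to0$ alone is not enough, as the paper's counterexample shows.

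The missing idea is the score-adaptive cutoff $u_N=\min\{\alpha_0/2,V_N(0)/4\}$. Since $V_N(0)\to\Delta_P$ almost surely, $u_N\to0$ under independence and $u_N\to\min\{\alpha_0/2,\Delta_P/4\}>0$ under dependence. Meanwhile $u_N\le\alpha_0/2$ preserves finite-sample level.

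Relatedly, with stationarity of one coordinate only, the shift-rank inequality reads $P(p_N\le u)\le2u$, for the original alignment placed at the center of a non-cyclic shift range. So your stated ``top $\lfloor\alpha_0(K_n+1)\rfloor$'' rule does not directly have level $\alpha_0$. Cyclic wrapping is not admissible either, because it changes the law of the shifted $Y$-block.
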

Section~\ref{sec:positive_regime} constructs the test and proves these guarantees. Weak mixing of either marginal implies KNR because that marginal has no eigenvalues other than \(1\). Theorem~\ref{thm:knr-testability} therefore includes this familiar sufficient condition, but its scope is much broader.

The consistency argument for the proposed test relies on ergodicity of the product system under \(Q=P_X\otimes P_Y\), which KNR guarantees. Shared nontrivial eigenvalues destroy this property. Surprisingly, the resulting obstruction extends beyond the proposed test. Denote \(\mathcal{C}_I\) as the class of jointly totally ergodic laws whose two coordinate processes share a nontrivial eigenvalue in any prescribed open arc \(I\subset \mathbb{S}^1\). It can be shown that \(\mathcal{C}_I = \mathcal{C}_{\mathrm{KR}} \cap \mathcal{E}_{\mathrm{TE}}\) (see Appendix~\ref{app:section3} for a proof). Denote the null as \(\mathcal{N}\) when \(\mathcal{C} = \mathcal{E}\). The following negative result makes this limitation precise.

\begin{theorem}[Impossibility under KR]
\label{thm:te-impossibility}
Let \((\varphi_n)\) be any possibly randomized test sequence whose power tends to one under every alternative in \(\mathcal C_I\). Then the set
\[\mathcal B_\varphi
:=
\left\{
Q\in\mathcal{N}\cap\mathcal E_{\mathrm{TE}}:
\limsup_{n\to\infty}\mathbb E_Q[\varphi_n]=1
\right\}
\]
is residual in \(\mathcal{N}\cap\mathcal E_{\mathrm{TE}}\) under the relative weak topology. Consequently, no possibly randomized test sequence is pointwise weakly consistent for distinguishing \(\mathcal{N}\cap\mathcal E_{\mathrm{TE}}\) from \(\mathcal C_{\mathrm{KR}}\cap\mathcal E_{\mathrm{TE}}\).
\end{theorem}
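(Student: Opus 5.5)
Throughout, I write \(\mathcal N_{\mathrm{TE}}:=\mathcal N\cap\mathcal E_{\mathrm{TE}}\). The plan is a Baire-category argument in the spirit of \citet{yao2026independence}. Fix \((\varphi_n)\) with power tending to one on \(\mathcal C_I\). For \(k,N\ge1\) define
\[
G_{k,N}:=\Bigl\{Q\in\mathcal N_{\mathrm{TE}}:\ \sup_{n\ge N}\mathbb E_Q[\varphi_n]>1-1/k\Bigr\},
\]
so that \(\mathcal B_\varphi=\bigcap_{k,N}G_{k,N}\). Since \(\varphi_n\) depends only on the first \(n\) coordinates and the alphabet is finite, \(Q\mapsto\mathbb E_Q[\varphi_n]\) is a finite linear combination of cylinder probabilities. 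It is therefore weakly continuous, and each \(G_{k,N}\) is relatively open. It remains to prove two things: that each \(G_{k,N}\) is dense in \(\mathcal N_{\mathrm{TE}}\), and that \(\mathcal N_{\mathrm{TE}}\) is a Baire space. Residuality then follows, and the final claim is immediate, because a weakly consistent test would force \(\mathcal B_\varphi=\emptyset\), contradicting density of a residual set in a Baire space.

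\textbf{Density via approximating alternatives.} The key step is to show that every \(Q_0=P_X\otimes P_Y\in\mathcal N_{\mathrm{TE}}\) is a weak limit of dependent laws \(P_j\in\mathcal C_I\) that are themselves weak limits of null laws in \(\mathcal N_{\mathrm{TE}}\). Given such a chain, fix a target neighbourhood \(V\) of \(Q_0\) and pick \(P_j\in V\). Since the power under \(P_j\) tends to one, some \(n\ge N\) has \(\mathbb E_{P_j}[\varphi_n]>1-1/k\). By continuity of \(Q\mapsto\mathbb E_Q[\varphi_n]\), a null law \(Q'\in V\cap\mathcal N_{\mathrm{TE}}\) close enough to \(P_j\) lies in \(G_{k,N}\).

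For the construction I would use an irrational rotation \(\theta\in I\), with \(e^{2\pi i\theta}\in I\) so that the shared eigenvalue lands in the arc. Let \(R\) be rotation by \(\theta\) on \(\mathbb S^1\) with a uniform phase \(U\). Modify both coordinate processes by a sparse, phase-driven perturbation: with small probability \(\varepsilon\), and along a set of positive density determined by \(\{U+t\theta\}\in A\) for an arc \(A\), replace \(X_t\) (respectively \(Y_t\)) by \(\mathbf 1\{U+t\theta\in A\}\). Using the \emph{same} phase \(U\) in both coordinates gives a dependent joint law. Each marginal then has \(e^{2\pi i\theta}\) as an eigenvalue, and joint total ergodicity comes from the Kronecker factor being an irrational rotation joined with TE systems, giving \(P_j\in\mathcal C_I\). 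Using independent phases \(U,U'\) for the two coordinates instead gives a product law \(Q_j\in\mathcal N_{\mathrm{TE}}\) close to \(P_j\). The closeness works because any finite window of the rotation-coded sequence is nearly determined by a coarse discretization of \(U\), and I would decorrelate by letting \(\theta\) vary: choose \(\theta'\) close to \(\theta\) in \(I\), use rotation \(\theta\) for \(X\) and \(\theta'\) for \(Y\) with the same phase, and exploit the fact that finite-window laws depend continuously on \((\theta,\theta')\). Rationally independent \(\theta,\theta'\) yield nonresonant laws that are not independent, however, so the cleaner route is a direct argument: for fixed window length \(n\), rotation-coded blocks under a common phase versus independent phases differ by at most the total variation between \(\mathcal L(U)\) and the joint code, and this is handled by replacing the deterministic rotation coding with a coding that is sampled from \(U\) through a TE extension whose \(n\)-block law is nearly independent of \(U\). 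Letting \(\varepsilon\to0\) puts \(P_j\) near \(Q_0\).

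\textbf{Baire property and main obstacle.} \(\mathcal N_{\mathrm{TE}}\) should be shown to be a \(G_\delta\) in the Polish set \(\mathcal N\). \(\mathcal N\) is the closed image of the continuous map \((P_X,P_Y)\mapsto P_X\otimes P_Y\) on pairs of stationary laws, and TE laws form a \(G_\delta\) among stationary laws via ergodicity of each \(T^m\), a standard \(G_\delta\) condition. The main obstacle is the density construction: producing \emph{jointly totally ergodic} dependent laws with a shared eigenvalue in \(I\) that are simultaneously weakly close to a given product law and weakly close to TE product laws. I would first try to make the shared rotation factor modulate the process only through an \(\varepsilon\)-rare, slowly varying marker. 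Its \(n\)-block influence is then \(O(\varepsilon)\) in total variation, uniformly over whether the phases are coupled or independent, which gives both approximations at once.
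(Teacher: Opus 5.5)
Your Baire skeleton (the sets $G_{k,N}$, openness via finite-window continuity, the $G_\delta$ argument, and the final contradiction) matches the paper. The density step, however, has a genuine gap.

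You ask for dependent laws $P_j\in\mathcal C_I$ that are weak limits of null laws, and this cannot happen. Stationary independent laws form a weakly closed set: cylinders are clopen, so $P(A\cap B)=\lim_j Q_j(A)Q_j(B)=P(A)P(B)$ passes to the limit. A fixed-horizon version fails as well. The horizon $n$ is produced by the power assumption \emph{after} $P_j$ is fixed. For a jointly ergodic law with a shared eigenvalue $\lambda\neq1$, the normalized marginal eigenfunctions satisfy $f(X)\overline{g(Y)}=a$ almost surely with $|a|=1$, while this variable has mean zero under the product of the marginals. Consequently $(P_j)_{1:n}$ stays at total variation distance bounded away from zero from every product $n$-block law as $n\to\infty$, however small your $\varepsilon$ is. Since $\varphi_n$ is arbitrary, high power at a single alternative gives no control of $\mathbb E_Q[\varphi_n]$ for any null $Q$. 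Your fallback coding ``whose $n$-block law is nearly independent of $U$'' would have to work at a horizon chosen adversarially after the coding, so it does not close the gap. Separately, joint total ergodicity is not automatic: $\theta$ must avoid a countable set so that $P_0\times R_\theta$ is totally ergodic.

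The missing idea is to apply the power assumption to a whole family of alternatives whose \emph{mixture} is independent. The paper lets $X$ use the rotation phase $\Theta$ and $Y$ the phase $\Theta+\beta$, both with frequency $\alpha$, inserted through rare Bernoulli$(p)$ markers; this gives laws $D_\beta\in\mathcal C_I$. Averaging over $\beta\sim m$ makes the two phases independent, so $\bar D=\int D_\beta\,dm(\beta)$ is an independent law. Dominated convergence then gives $\mathbb E_{\bar D}[\varphi_n]=\int\mathbb E_{D_\beta}[\varphi_n]\,dm(\beta)\to1$, with no uniformity in $\beta$ needed.

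Your independent-phase law $Q_j$ is essentially this $\bar D$. Its value is not proximity to any single $P_j$ (it is far from each), but that it inherits high rejection by integration. It is, however, not ergodic: the marginals share $\lambda$, so $f(X)\overline{g(Y)}$ is a nonconstant invariant function. Hence it is not in $\mathcal N\cap\mathcal E_{\mathrm{TE}}$.

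The paper repairs this only after $n$ is fixed. It moves the $Y$-frequency to a nearby $\vartheta$ outside a countable exceptional set, giving an independent totally ergodic $Q_\vartheta$ that the marker bound keeps in the neighbourhood. It controls the horizon through the total variation estimate $d_{\mathrm{TV}}((Q_\vartheta)_{1:n},\bar D_{1:n})\le n(n+1)\,d_{\mathbb T}(\vartheta,\alpha)$, obtained via a frequency-free substitution kernel. The order of choices ($p$ and $\alpha$, then $n$, then $\vartheta$) is exactly what your argument lacks.
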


Section~\ref{sec:negative_regime} proves this negative regime. The theorem identifies a class-level KR obstruction: detecting every resonant alternative, even within an arbitrarily small frequency arc, forces false-positive probabilities arbitrarily close to one on a residual set of independent TE laws. TE shows that this failure persists even after finite periodicity is excluded. This is much stronger than the result of \citet{yao2026independence}.

Figure~\ref{fig_class} summarizes the resulting class-level picture: \(\mathcal C_{\mathrm{KNR}}\) permits consistent testing, whereas requiring power throughout \(\mathcal C_I\) forces residual false-positive failure within the independent totally ergodic null class.

\begin{figure}[h]
  \centering
  \includegraphics[width=\columnwidth]{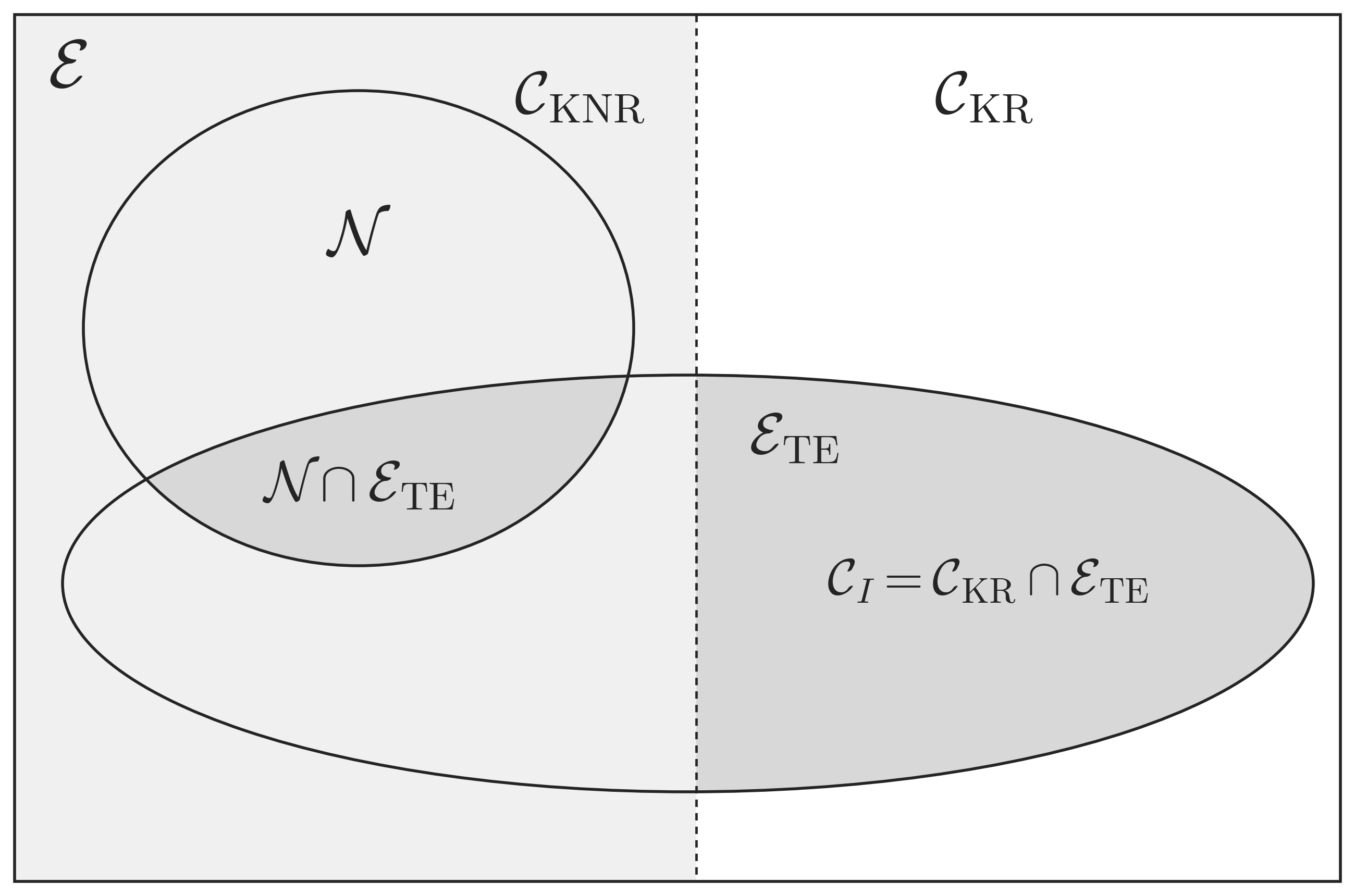}
  \caption{Class-level Kronecker boundary within $\mathcal E$. This figure is schematic and illustrates only set-theoretic relationships. The shapes and placement of the regions do not represent the actual topology of \(\mathcal C_{\mathrm{KNR}}\) and \(\mathcal C_{\mathrm{KR}}\). In particular, no connectedness or separation by a simple curve is implied.}
  \label{fig_class}
\end{figure}

\section{Learning Without Shared Eigenvalues}
\label{sec:positive_regime}

In this section, we prove Theorem~\ref{thm:knr-testability}. We develop the construction step by step, clarifying what existing methods provide, why our additional ingredients are needed, and where KNR enters.

\paragraph{Capturing dependence by a single score.} Our goal is to summarize full-process dependence in a single score. Since the joint law \(P\) is determined by its finite-dimensional distributions, any dependence is witnessed by a pair of events, each involving finitely many observations from its respective process. As the witnessing pair is unknown, we construct a score that combines all such potential witnesses. For finite-valued observations\footnote{For real-valued observations, it also suffices to consider a countable collection of event pairs.}, the collection of finite-window event pairs is countable, denoted as \((A_r, B_r), r \geq 1\). It can be ordered such that \((A_r, B_r)\) is supported on time indices in \(\{-r, \ldots, r\}\) (see Lemma~\ref{app:s4:enumeration}). Moreover, two events \(A_r, B_r\) are dependent exactly when their binary indicators \(F_r:=\mathbf 1_{A_r}, G_r:=\mathbf 1_{B_r}\) have nonzero covariance \(c_r:= \operatorname{Cov}_P(F_r,G_r)\). We combine these covariances into a single population score using exponentially decaying positive weights: \(\Delta_P:=\sum_{r\geq1}2^{-r}|c_r|\). It is obvious that \(\Delta_P = 0 \iff P \in \mathcal{H}_0\). To estimate this quantity, we compute the empirical covariance \(\hat{c}_{r,N}\) and the empirical score $V_N$ for finite samples. Here \(N\) controls how much of the record is used for each estimate; it grows with the total record length \(n\). Under mere joint ergodicity, \( V_N \to  \Delta_P, P-\text{a.s.}, P\in\mathcal E\) (see Proposition~\ref{app:s4:original-limit}).

Finite records can yield nonzero empirical covariances even under process independence, so a positive estimated score can trigger false alarms. Ergodicity ensures that these errors vanish but provides no convergence rate; a threshold that shrinks faster than the scores under independence may therefore keep producing false alarms as the record grows. Calibrating the threshold requires a reference for score magnitudes under independence at the same record length. Ideally, this would come from repeated records, but we observe only one trajectory from a jointly stationary process whose independence is unknown. We therefore reuse windows from different parts of it. For their scores to be comparable, window distributions must not depend on their starting positions; otherwise, score differences could reflect changes within either process rather than dependence between them.

\paragraph{Shift-based scores.} To turn these reused windows into a reference, we recompute the empirical dependence score on pairs of windows, one from each process. We keep the original first window fixed so that only the choice of the second window changes between comparisons. Pairing it with equally long windows starting at different positions in the second record amounts to shifting only the second signal. This will lead to the definition of shifted covariance
\begin{equation}
    c_r(s) := \operatorname{Cov}_P(F_r,G_r\circ T^s), \qquad s \in \mathbb{Z}
\end{equation}
and its empirical version
\begin{equation}
\begin{aligned}
\hat{c}_{r,N}&(s) :=
 \frac{1}{|\mathcal{I}_N|}\sum_{t\in \mathcal{I}_N}
 (F_r\circ T^t)(G_r\circ T^{t+s})
 \\&-
 \left(\frac{1}{|\mathcal{I}_N|}\sum_{t\in \mathcal{I}_N}F_r\circ T^t\right)
 \left(\frac{1}{|\mathcal{I}_N|}\sum_{t\in \mathcal{I}_N}G_r\circ T^{t+s}\right),
\end{aligned}
\end{equation}
where \(\mathcal{I}_N\) is the common averaging window used for all shifts whose elements will be decided later. Similarly, we can define the shifted score
\begin{equation}
 \Delta_P(s):=\sum_{r\geq1}2^{-r}|c_r(s)|
\end{equation}
and its empirical version
\begin{equation}
    V_N(s) := \sum_{r = 1}^{K_N}2^{-r}|\hat{c}_{r,N}(s)|, \qquad s \in \mathcal{S}_N.
\end{equation}
Here \(\mathcal{S}_N\) is the set of relative
shifts used to construct the reference, and \(K_N\), the feature budget, is the number of finite-window event pairs included in the empirical score, chosen so that \(1 \leq K_N \leq N\) and \(K_N \to \infty\) as \(N \to \infty\). We choose $\mathcal{I}_N$ so that all
observations required for these comparisons lie within the
available record. By stationarity, shifting the second window does not change its distribution. Under process independence, each shifted second window is also independent of the fixed first window. Thus, the joint distribution of the paired windows is unchanged by the shift, and consequently \(V_N(s)\) has the same distribution as \(V_N(0)\) for every fixed \(N\). Moreover, process independence implies \(c_r(s)=0\) for every \(r,s\), so \(\Delta_P(s)=0\) for every shift \(s\).

\paragraph{Rank-based calibration.} We now assess whether the original empirical score \(V_N(0)\) is unusually large relative to the shifted scores \(V_N(s)\). Specifically, we count the fraction of shifts whose scores are at least as large:
\begin{equation}
    p_N := \frac{1}{2N+1} \sum_{s \in \mathcal{S}_N} \mathbf{1}\{V_N(s) \ge V_N(0)\}.
    \label{eq:rank-def}
\end{equation}
A small \(p_N\) means that few shifted alignments produce a score as large as the original alignment. A shift-rank inequality from \citet{harris2020shift} and \citet{yuan2024rigorous} shows that, under process independence, the original score rarely ranks unusually high among the shifted scores, i.e., at every \(N\), 
\begin{equation}
 P(p_N\leq u)\leq2u,\qquad 0<u\leq\tfrac12,
 \quad P\in\mathcal H_0.
 \label{knr:eq:rank-calibration}
\end{equation}
The shift-test framework uses this inequality to control finite-sample Type-I error: with prescribed \(\alpha_0 \in (0,1)\), rejecting when \(p_N\leq\alpha_0/2\) gives a false-positive probability at most \(\alpha_0\).

Both \citet{harris2020shift} and \citet{yuan2024rigorous} do not establish that both error probabilities tend to zero over the class considered here. A natural attempt is to choose a deterministic rejection threshold \(u_N\downarrow0\) and declare dependence whenever \(p_N\leq u_N\). The shift-rank inequality then makes the false-positive probability tend to zero. Nevertheless, this alone does not ensure even weak consistency under dependence. For a dependent law $P$, weak consistency requires the probability of declaring dependence $P(p_N\leq u_N)$ to tend to one. The shift-rank inequality applies only under independence and provides no such guarantee. Moreover, even proving \(p_N \to 0\) in probability would not suffice: the rejection threshold \(u_N\) also tends to zero, and \(p_N\) may remain above it if \(u_N\) decreases too quickly (see Appendix~\ref{app:s4:shrinking-threshold} for a counterexample). 

\paragraph{Adaptive threshold.} Convergence of the rank to zero alone does not justify a predetermined shrinking cutoff. We therefore use a data-adaptive rejection threshold. Our adaptive cut-off is
\begin{equation}
    u_N=\min\{\alpha_0/2,V_N(0)/4\} \label{knr:eq:test}
\end{equation}
(see Appendix~\ref{app:s4:adaptive-threshold} for its validity). We therefore define the test by
\begin{equation}
    \varphi_n=\mathbf 1\{p_N\leq u_N\}.
\end{equation}
Although its threshold is data-adaptive, \(\varphi_n\) is a deterministic test: each observed record uniquely determines its decision. We now choose \(N = \lfloor n/6\rfloor, \mathcal S_N=\{-N,\ldots,N\}\) and \(\mathcal I_N=\{2N+1,\ldots,4N\}\), whose validity will be shown in the Lemma~\ref{app:s4:record}. Algorithm~\ref{knr:alg:test} summarizes the construction.

\begin{algorithm}[t]
 \caption{Score-Adaptive Shift-Rank Test}
 \label{knr:alg:test}
 \begin{algorithmic}[1]
  \STATE \textbf{Input:} $Z_{1:n}$, level $\alpha_0\in(0,1)$,
  and budget $(K_N)$.
  \STATE Set $N\leftarrow\lfloor n/6\rfloor$;
  if $N=0$, return $0$.
  \STATE Set $\mathcal I_N\leftarrow\{2N+1,\ldots,4N\}$
  and $\mathcal S_N\leftarrow\{-N,\ldots,N\}$.
  \STATE Compute the empirical covariances
  $\hat c_{r,N}(s)$ and scores $V_N(s)$ defined above,
  for $r\leq K_N$ and $s\in\mathcal S_N$.
  \STATE Compute $p_N$ by \eqref{eq:rank-def}
  and $u_N$ by \eqref{knr:eq:test}.
  \STATE \textbf{return} $\mathbf 1\{p_N\leq u_N\}$.
 \end{algorithmic}
\end{algorithm}

At this stage, the construction provides finite-sample Type-I error control and vanishing Type-I error under joint stationary ergodicity without invoking KNR. The difficult remaining step is to prove that the Type-II error vanishes—that is, the rejection probability tends to one under dependence. As anticipated in our construction, it suffices to show \(p_N \to 0\) in probability. A direct analysis of \(p_N\) asks which shifted scores \(V_N(s)\) exceed the original score \(V_N(0)\); convergence of individual scores alone does not determine this fraction. This motivates a different approach. 

\paragraph{Passing to average score.} Under dependence, \(V_N(0)\) approaches a positive value \(\Delta_P(0) > 0\). Every \(V_N(s)\) counted by \(p_N\) has a score at least as large as \(V_N(0)\). By nonnegativity of the scores and definition of \(p_N\), Eq.~\eqref{eq:rank-def}, the average shifted score bounds the rank fraction:
\begin{equation}
 S_N:=\frac1{2N+1}\sum_{s\in\mathcal S_N}V_N(s),
 \quad p_N V_N(0)\leq S_N.
 \label{knr:eq:rank-from-average}
\end{equation}
By Slutsky’s theorem, it suffices to show \(S_N \to 0\) in probability. By Markov's inequality, it suffices to show \(\mathbb{E}_P S_N \to 0\) (see Proposition~\ref{app:s4:score-vanishes} for a detailed proof).

\paragraph{Passing to average squared covariance.} Linearity of expectation separates the contributions of individual event pairs, while Cauchy–Schwarz's inequality bounds each contribution in terms of its averaged expected squared empirical covariance:
\begin{equation}
\begin{aligned}
\mathbb E_PS_N
& =
\sum_{r=1}^{K_N}2^{-r}
\frac1{2N+1}\sum_{s\in\mathcal S_N}\mathbb E_P|\hat c_{r,N}(s)|\\
& \leq \sum_{r=1}^{K_N}2^{-r} \sqrt{\frac1{2N+1}\sum_{s\in\mathcal S_N}
\mathbb E_P|\hat c_{r,N}(s)|^2}\\
\end{aligned}
\end{equation}
A truncation argument (see Appendix~\ref{app:s4:covariance-truncation})
shows that it suffices to prove
\begin{equation}
    \frac1{2N+1}\sum_{s\in\mathcal S_N}
\mathbb E_P|\hat c_{r,N}(s)|^2 \to 0, \quad \forall r \geq 1
\label{eq:averaged_squared_empirical_covariance_vanish}
\end{equation}

\paragraph{Passing to long average.} Fix \(r\), and center the indicators:
\begin{equation}
f=F_r-\mathbb E_PF_r, \quad g=G_r-\mathbb E_PG_r
\end{equation}
Define the average shift product of the centered indicators, called the ``long average'' for brevity:
\begin{equation}
A_N(s):=\frac1{|\mathcal I_N|}
\sum_{t\in\mathcal I_N}f(T^tX)g(T^{t+s}Y).
\end{equation}
It can be shown that replacing \(\hat c_{r,N}(s)\) by \(A_N(s)\) does not change the limiting averaged second moment (see Lemma~\ref{app:s4:centering-lemma} for a proof).

\paragraph{Passing to short average.}Now divide the long estimation interval into many short blocks of length \(H\). Define the average of the similar products over a short block of length \(H\), called the ``short average'' for brevity:
\begin{equation}
B_H(s):=\frac1H\sum_{t=0}^{H-1}f(T^tX)g(T^{t+s}Y).
\end{equation}
By joint stationarity and Jensen’s inequality, the long average \(A_N(s)\) has no greater mean-square size than the short average \(B_H(s)\), apart from a boundary contribution that vanishes as the record grows (see Lemma~\ref{app:s4:block-lemma} for a proof). This yields
\begin{equation}
\begin{aligned}
&\limsup_{N\to\infty}\frac1{2N+1}
 \sum_s\mathbb E_P|\hat c_{r,N}(s)|^2\\
&=\limsup_{N\to\infty}\frac1{2N+1}
 \sum_s\mathbb E_P|A_N(s)|^2\\
&\le \lim_{N\to\infty}\frac1{2N+1}
 \sum_s\mathbb E_P|B_H(s)|^2.
\end{aligned}
\end{equation}

\paragraph{Passing to Koopman operator.} Keep \(H\) fixed, and write \(f_t=f(T_X^tX)\), \(g_t=g(T_Y^tY)\). Expanding the square gives
\begin{equation}
\begin{aligned}
&\frac1{2N+1}\sum_s\mathbb E_P|B_H(s)|^2
 \\
&=
\frac1{H^2}\sum_{t,u=0}^{H-1}
\mathbb E_P\!\left[
f_tf_u\,
\left(\frac1{2N+1}\sum_s g_{t+s}g_{u+s}\right)
\right].
\end{aligned}
\end{equation}
For each fixed pair \(t,u\), the term in round bracket converges to a constant \(\mathbb{E}_P[g_tg_u]\) almost surely by the ergodic theorem. Bounded convergence therefore yields
\begin{equation}
\begin{aligned}
 \lim_{N\to\infty}&\frac1{2N+1}\sum_s\mathbb E_P|B_H(s)|^2
\\
& =
\frac1{H^2}\sum_{t,u=0}^{H-1}
\mathbb E_P[f_tf_u]\,
\mathbb E_P[g_tg_u] \\
& = \frac1{H^2} \sum_{t,u=0}^{H-1} \mathbb E_Q[f_tf_ug_tg_u] = \mathbb E_Q\left|\frac1H\sum_{t=0}^{H-1}f_tg_t\right|^2 \\
& = \left\|\frac1H\sum_{t=0}^{H-1}(U_X\otimes U_Y)^t(f\otimes g)
 \right\|_{L^2(Q)}^2,\\
\end{aligned}
\end{equation}
where \(Q:=P_X\otimes P_Y\) is the product law.

\paragraph{Passing to eigenvalues.} The mean ergodic theorem gives
\begin{equation}
\frac1H\sum_{t=0}^{H-1}(U_X\otimes U_Y)^t(f\otimes g)
\longrightarrow \Pi_{\ker(U_X\otimes U_Y-\mathrm{id})}(f\otimes g)
\end{equation}
in \(L^2(Q)\) as \(H \to \infty\), where \(\Pi_{\ker(U_X\otimes U_Y-\mathrm{id})}\) denotes the orthogonal projection onto the \(1\)-eigenspace of \(U_X\otimes U_Y\). 

Under KNR, the projection in our formula is simply the projection onto constants. Because \(f,g\) are centered,
\begin{equation}
\Pi_{\ker(U_X\otimes U_Y-\mathrm{id})}(f\otimes g)
=\mathbb E_Q[f(X)g(Y)]\,\mathbf1=0.
\end{equation}
First taking \(N\to\infty\), then \(H\to\infty\), proves required~\eqref{eq:averaged_squared_empirical_covariance_vanish}. This also establishes Theorem~\ref{thm:knr-testability}.

\section{Impossibility of Universal Learning}
\label{sec:negative_regime}
\label{knr:sec:impossibility}

In this section, we prove Theorem~\ref{thm:te-impossibility}, starting with a simple example that illustrates how shared eigenvalues obstruct consistent independence testing. 

\subsection{A torus prototype}
\label{knr:sec:torus-prototype}

Koopman eigenvalues lie on \(\mathbb{S}^1\), so we can write a shared nontrivial eigenvalue as \(\lambda=e^{2\pi i\alpha}\). Multiplication by \(\lambda\) rotates a point on this circle by \(\alpha\) turns. The simplest model is therefore two oscillators whose phases undergo this same rotation at each observation. In order to make physical intuition a binary stochastic process, we introduce the indicator function \(b(u) = \mathbf{1}_{[0, 1/2)}(u)\) and endow each copy of \(\mathbb{S}^1\) with the uniform measure \(m\), under which the initial phase $\Theta$ is a uniformly distributed random variable. Furthermore, to ensure the rotation system is totally ergodic, we fix the shared rotation frequency to be an irrational number $\alpha$.

\paragraph{Phase locking and resonant dependence.}
We formalize this prototype as follows.For each fixed relative phase $\beta\in\mathbb S^1$ and an initial phase \(\Theta \sim m\), we define
\begin{equation}
  D_\beta^{\mathrm{rot}} : \bigl(A_t,B_t^{(\beta)}\bigr)= \bigl(b(\Theta+t\alpha), b(\Theta+\beta+t\alpha)\bigr), \, t\in\mathbb{Z}
\end{equation}
The phases rotate at the same speed with a fixed relative phase, so they are dependent. This dependence survives binary coding: \(A\) and \(B^{(\beta)}\) retain the common eigenvalue \(\lambda\), preserving their Kronecker resonance (see Proposition~\ref{app:s5:prototype}).  Further, \(D_\beta^{\mathrm{rot}}\) inherits total ergodicity (see Proposition~\ref{app:s5:prototype}). Thus, \(D_\beta^{\mathrm{rot}} \in \mathcal{C}_I\) is a dependent law. By the assumption of Theorem~\ref{thm:te-impossibility}, the test sequence satisfies \(\mathbb E_{D_\beta^{\mathrm{rot}}}[\varphi_n] \to 1\).

\paragraph{Haar averaging to restore independence.}
To obtain a law with independent coordinate processes, we draw \(\beta\sim m\) independently of \(\Theta\) and consider the Haar-averaged law
\begin{equation}
 \bar D^{\mathrm{rot}} := \int_{\mathbb S^1}D_\beta^{\mathrm{rot}}\,dm(\beta).
 \label{knr:eq:prototype-mixture}
\end{equation}
In fact, \(\bar D^{\mathrm{rot}}\) has independent coordinate processes. To see why, for any measurable event \(E\) involving the pair of paths,
\begingroup
\small
\begin{equation}
\begin{aligned}
&\bar D^{\mathrm{rot}}(E)\\
&=\int_{\mathbb S^1}\int_{\mathbb S^1}
\mathbf 1_E\!\left(
  (b(\theta+t\alpha),b(\theta+\beta+t\alpha))_{t\in\mathbb Z}
\right)
\,dm(\theta)\,dm(\beta)\\
&=\int_{\mathbb S^1}\int_{\mathbb S^1}
\mathbf 1_E\!\left(
  (b(\theta+t\alpha),b(v+t\alpha))_{t\in\mathbb Z}
\right)
\,dm(v)\,dm(\theta).
\end{aligned}
\end{equation}
\endgroup
This is given by Fubini's theorem and the measure-preserving map  \((\Theta, \beta) \mapsto (\Theta, \Theta+\beta)\). The last expression is the product of the two marginal path laws evaluated at \(E\), proving independence. Figure~\ref{fig:haar-averaging} illustrates this construction.

\begin{figure}[ht]
  \centering
  \includegraphics[width=\columnwidth]{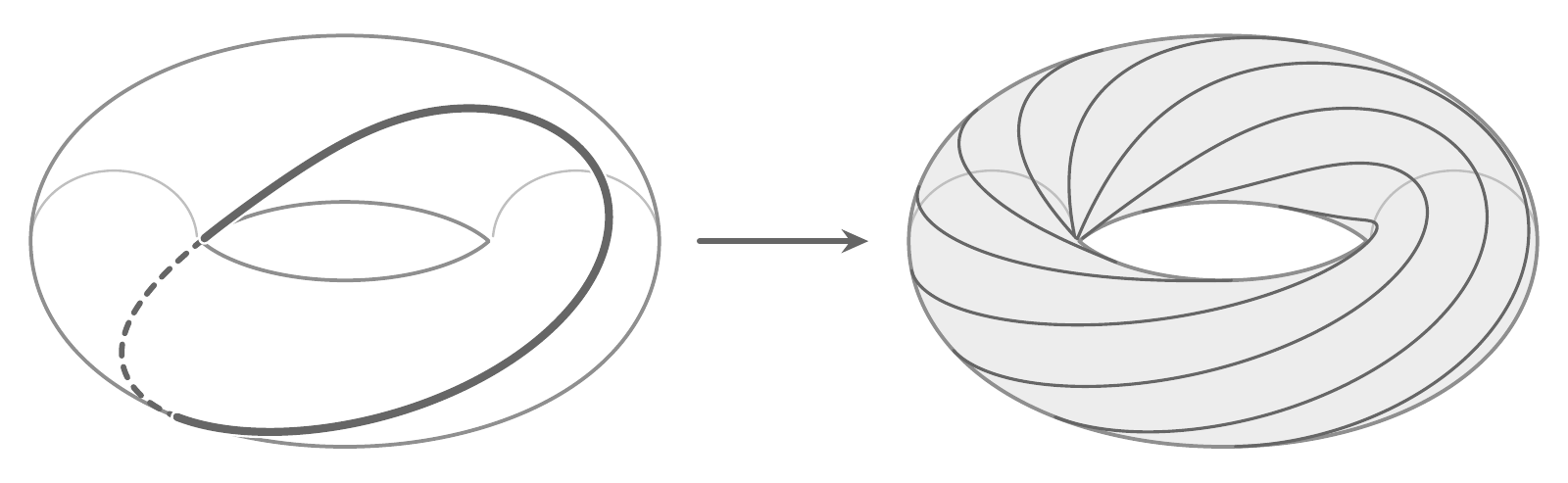}
  \caption{Haar averaging restores independence. Geometrically, each fixed relative phase \(\beta\) gives a uniform distribution on one diagonal circle of the phase torus \(\mathbb{T}\); Haar averaging over the relative phase yields the uniform distribution on the entire torus.}
  \label{fig:haar-averaging}
\end{figure}

Crucially, this averaging preserves the test's high rejection probability:
\begin{equation}
 \mathbb E_{\bar D^{\mathrm{rot}}}[\varphi_n]
   =\int_{\mathbb S^1}\mathbb E_{D_\beta^{\mathrm{rot}}}[\varphi_n]\,dm(\beta)
   \longrightarrow1.
 \label{knr:eq:prototype-mixture-power}
\end{equation}
However, the independent marginals still share the nontrivial
eigenvalue $\lambda$, so their product law $\bar D^{\mathrm{rot}}$
is not ergodic (see Proposition~\ref{app:s5:prototype}). We must therefore restore total ergodicity while
retaining the high rejection probability at a selected finite horizon.

\paragraph{Post-horizon frequency perturbation.}
We perturb the second frequency to \(\vartheta\) near \(\alpha\), and define
\begin{equation}
  Q_\vartheta^{\mathrm{rot}}: \bigl(b(\Theta+t\alpha),\ b(\Theta+\beta+t\vartheta)\bigr), \, t \in \mathbb{Z}.
 \label{knr:eq:prototype-split}
\end{equation}
The independent initial phases ensure process independence (see Appendix~\ref{app:prototype-kr-te}). When \(1,\alpha,\vartheta\) are \(\mathbb Q\)-linearly independent, the system is totally ergodic, so \(Q_\vartheta^{\mathrm{rot}}\in \mathcal N\cap\mathcal E_{\mathrm{TE}}\) (see Appendix~\ref{app:prototype-kr-te}). Such rationally independent frequencies occur arbitrarily close to \(\alpha\).

\begin{figure}[ht]
  \centering
  \includegraphics[width=\columnwidth]{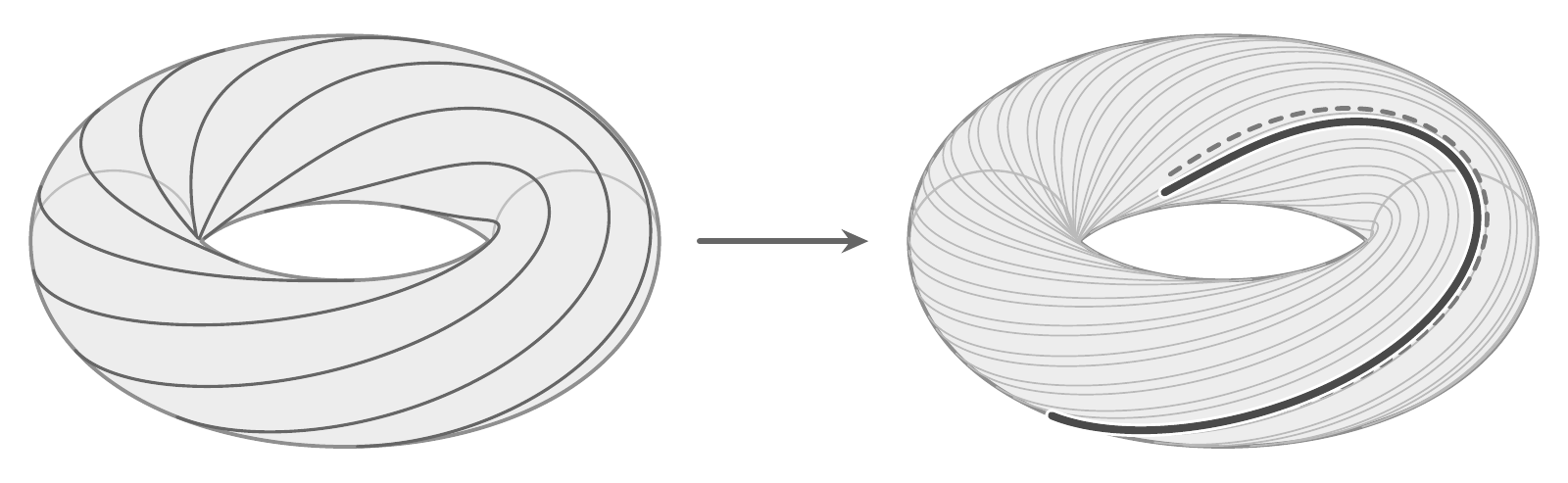}
  \caption{Frequency perturbation on the phase torus. The original (dashed) and perturbed (solid) traces remain close over a finite horizon, while the perturbed dynamics eventually explore the entire torus (pale winding, shown schematically). Shading denotes uniform independent initial phases in both graphs.}
  \label{fig:frequency-perturbation}
\end{figure}

As illustrated in Figure~\ref{fig:frequency-perturbation}, a small frequency perturbation changes the long-time dynamics while remaining almost invisible on a fixed finite horizon. The total variation distance is bounded by
\begin{equation}
 d_{\mathrm{TV}}\!\left(
      (Q_\vartheta^{\mathrm{rot}})_{1:n},
      (\bar D^{\mathrm{rot}})_{1:n}\right)
 \leq C n^2 d_{\mathbb T}(\vartheta,\alpha)
 \label{knr:eq:prototype-tv}
\end{equation}
(see Lemma~\ref{app:tv-bound}). By choosing horizon \(n\) \emph{before} selecting \(\vartheta\), we can make this distance arbitrarily small. This forces the valid null law \(Q_\vartheta^{\mathrm{rot}}\) to be falsely rejected with high probability:
\begin{equation}
 \begin{aligned}
 \mathbb E_{Q_\vartheta^{\mathrm{rot}}}[\varphi_n] \geq c,
 \end{aligned}
 \label{knr:eq:prototype-large-rejection}
\end{equation}
for any threshold \(c\in(0,1)\). Appendix~\ref{app:fixed-torus-null} constructs a fixed null law
\(Q_{\vartheta^\star}^{\mathrm{rot}}
\in\mathcal N\cap\mathcal E_{\mathrm{TE}}\) such that \(
  \limsup_{n\to\infty}
  \mathbb E_{Q_{\vartheta^\star}^{\mathrm{rot}}}[\varphi_n]=1.
\)

\subsection{Embedding the mechanism near an arbitrary null}
\label{knr:sec:prototype-generalization}

The torus prototype produces independent, totally ergodic laws that the test rejects with high probability at arbitrarily late horizons. To localize this obstruction near an arbitrary \(P_0\in\mathcal N\cap\mathcal E_{\mathrm{TE}}\), we first construct a family of nearby Kronecker-resonant alternatives \(D_\beta\). Fix a neighborhood \(\mathcal O\) of \(P_0\) relative to \(\mathcal N\cap\mathcal E_{\mathrm{TE}}\), and choose an ambient weakly open neighborhood \(G\) of \(P_0\) in the space of stationary laws such that \(G\cap(\mathcal N\cap\mathcal E_{\mathrm{TE}})\subseteq\mathcal O\).

Write \(P_0=\mu\otimes\nu\), and let \((X^0,Y^0)\sim P_0\). Let \(\Theta\sim m\) be independent of the background process. Choose an irrational frequency \(\alpha\) such that the background process together with the phase evolution \(\Theta\mapsto\Theta+\alpha\) forms a totally ergodic system. Such a choice is possible because only countably many frequencies are excluded (see Lemma~\ref{app:s5:generic-extension}). For each fixed relative phase \(\beta\in\mathbb S^1\), define the torus process \(D_\beta^{\mathrm{rot}}\) as is stated above.

\paragraph{Rare marker.}
For \(p\in(0,1)\), let \(M^X\) and \(M^Y\) be independent i.i.d.\ Bernoulli(\(p\)) marker processes, independent of \((X^0,Y^0,\Theta)\). For each fixed \(\beta\in\mathbb S^1\), define \(D_\beta\) as the law
\begin{equation}
D_\beta: \quad
 \begin{aligned}
 X_t^{(\beta)}&=(1-M_t^X)X_t^0+M_t^X A_t\\
 Y_t^{(\beta)}&=(1-M_t^Y)Y_t^0+M_t^Y B_t^{(\beta)}
 \end{aligned}
 \quad ,t\in\mathbb{Z}.
 \label{eq:prototype-embedding}
\end{equation}
Each marker replaces a background bit with the corresponding rotation bit. We can choose \(p>0\) sufficiently small that \(D_\beta\in G\) for every \(\beta\). Since proximity to \(P_0\) is ensured by the rarity of the markers, the same choice of \(p\) also keeps the subsequent averaged and frequency-perturbed laws in \(G\) (see Appendix~\ref{app:s5:embedding}). Every \(D_\beta\) is in \(\mathcal C_I\), so the assumed power condition gives \(\mathbb E_{D_\beta}[\varphi_n]\to1\) for every \(\beta\).

\paragraph{Transferring the obstruction via a Markov kernel.}
We now view rare-marker replacement as a randomized procedure acting on a torus input process. On the window \(1:n\), this procedure defines a Markov kernel \(K_{p,n}\). The kernel depends on \(P_0,p,n\), but not on the frequencies \(\alpha,\vartheta\) generating the input bits (see Lemma~\ref{app:markov-kernel}). Define \(\bar D\) and \(Q_\vartheta\) as the output laws when the input laws are respectively \(\bar D^{\mathrm{rot}}\) and \(Q_\vartheta^{\mathrm{rot}}\). Since a Markov kernel cannot increase total variation distance (see Lemma~\ref{app:markov-kernel}), this will lead to
\begin{equation}
 \begin{aligned}
d_{\mathrm{TV}} &\!\left(
   (Q_\vartheta)_{1:n},\bar D_{1:n}
 \right)\\
 &\le
 d_{\mathrm{TV}}\!\left(
   (Q_\vartheta^{\mathrm{rot}})_{1:n},
   (\bar D^{\mathrm{rot}})_{1:n}
 \right)\\
 &\le
 n(n+1)d_{\mathbb T}(\vartheta,\alpha).
 \end{aligned}
 \label{eq:prototype-embedded-tv}
\end{equation}
The choice of \(p\) already ensures \(Q_\vartheta\in G\).

\paragraph{High rejection neighbors.}
Note that \(\bar D\) is just the Haar-averaged \(D_\beta\) and \(\bar D=Q_\alpha\) (see Appendix~\ref{app:s5:phase-mixture}). By dominated convergence, with \(p\) fixed, for any \(k\ge1\) and \(c\in(0,1)\), first choose \(n\ge k\) such that \(\mathbb E_{\bar D}[\varphi_n]>c\). Then choose \(\vartheta\) sufficiently close to \(\alpha\) that \(\mathbb E_{Q_\vartheta}[\varphi_n]>c\), while avoiding the countable exceptional set of frequencies for which the full latent system fails to be totally ergodic (see Lemma~\ref{app:s5:split-te} and Proposition~\ref{app:s5:open-dense}). Then \(Q_\vartheta\in G\cap(\mathcal N\cap\mathcal E_{\mathrm{TE}})
\subseteq\mathcal O\). Thus, every relative weak neighborhood \(\mathcal O\) of \(P_0\) in \(\mathcal N\cap\mathcal E_{\mathrm{TE}}\) contains a null law \(Q_\vartheta\) rejected with probability greater than \(c\) at an arbitrarily late horizon \(n\).

\paragraph{Baire Diagonalization.}
The remaining task is to remove the dependence on the chosen horizon \(n\) and threshold \(c\). For \(k\ge1\) and \(c\in(0,1)\), let \(U_{k,c}:=\bigcup_{n\ge k}
\left\{P\in\mathcal N\cap\mathcal E_{\mathrm{TE}}:
\mathbb E_P[\varphi_n]>c\right\}\). These sets are open in the relative weak topology on \(\mathcal N\cap\mathcal E_{\mathrm{TE}}\) by finite-window continuity and dense in that space by the preceding construction. Baire's theorem therefore implies that \(\bigcap_{j\ge2}\bigcap_{k\ge1}U_{k,1-1/j}\) is a dense \(G_\delta\) set of fixed null laws satisfying \(\limsup_n\mathbb E_P[\varphi_n]=1\) (see Appendix~\ref{app:baire}).

\section{Conclusion and Discussion}
\label{sec:discussion}

Our research follows a tradition of foundational theory: we begin with a basic question, identify the structure underlying a common guarantee, and prove an obstruction beyond the scope of that guarantee \citep{BahadurSavage1956,XieGuo2000,SahaiMitter2006,ShahPeters2020,BousquetEtAl2021}. In our setting, the basic question is whether full-process independence can be tested consistently from a single increasingly long observation of two jointly stationary ergodic processes. 

We identify a class-level boundary of such learnability. The absence of shared nontrivial Koopman eigenvalues supports a concrete weakly consistent test. No single test can maintain false-positive control over every independent law while consistently detecting every dependent law with a shared nontrivial Koopman eigenvalue in any prescribed open arc.

A striking aspect of these results is that a purely spectral property of the marginal dynamics constrains the learnability of full-process independence. Independence concerns the entire collection of finite-dimensional joint distributions, so its connection to shared Koopman eigenvalues is far from apparent from the testing problem itself. In hindsight, earlier rotation-based constructions provide concrete intuition for the obstruction, which, however, do not by themselves identify the general structure \citep{ryabko2017independence,yuan2024rigorous,yao2026independence}.

In fact, the spectral structure appearing here is encoded by the Kronecker factor, which is also the first nontrivial level of the Host--Kra--Ziegler hierarchy \citep{HostKra2005,Ziegler2007} (see Appendix~\ref{app:kronecker}). Our results give it an explicit role in a statistical learning problem. It triggers further questions: Which restrictions on Kronecker-resonant alternatives restore a common consistency guarantee? Whether higher Host--Kra--Ziegler factors similarly constrain the scope of guarantees for other statistical learning targets?


\bibliography{main}
\bibliographystyle{icml2026}

\newpage
\appendix
\onecolumn
\section{Proof details for Section 3}
\begin{lemma}
\label{app:section3}
Let \(I\subseteq\mathbb S^1\) be a nonempty open arc, and let
\(\mathcal C_I\) consist of the laws \(P\in\mathcal E_{\mathrm{TE}}\)
such that
\[
  \bigl(\Sigma_X(P)\cap\Sigma_Y(P)\bigr)
  \cap\bigl(I\setminus\{1\}\bigr)\neq\varnothing.
\]
Then
\[
  \mathcal C_I
  =\mathcal C_{\mathrm{KR}}\cap\mathcal E_{\mathrm{TE}}.
\]
\end{lemma}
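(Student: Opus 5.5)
The inclusion \(\mathcal C_I\subseteq\mathcal C_{\mathrm{KR}}\cap\mathcal E_{\mathrm{TE}}\) is immediate. If \(P\in\mathcal C_I\), then \(P\in\mathcal E_{\mathrm{TE}}\subseteq\mathcal E\), and \(\Sigma_X(P)\cap\Sigma_Y(P)\) contains an element of \(I\setminus\{1\}\). That intersection therefore differs from \(\{1\}\), so \(P\in\mathcal C_{\mathrm{KR}}\). All the content lies in the reverse inclusion: I must show that a shared nontrivial eigenvalue anywhere on \(\mathbb S^1\) forces a shared nontrivial eigenvalue inside the arbitrary open arc \(I\).

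The plan is to use the group structure of point spectra together with total ergodicity. Take \(P\in\mathcal C_{\mathrm{KR}}\cap\mathcal E_{\mathrm{TE}}\). Joint ergodicity implies ergodicity of each marginal (Preliminaries). For an ergodic system the point spectrum is a subgroup of \(\mathbb S^1\): if \(Uf=\lambda f\) and \(Ug=\mu g\), then \(|f|\) is invariant, hence constant and nonzero. It follows that \(fg\) is an eigenfunction for \(\lambda\mu\) and \(\bar f\) is one for \(\bar\lambda\). Consequently \(\Sigma_X(P)\cap\Sigma_Y(P)\) is also a subgroup of \(\mathbb S^1\), and by assumption it contains some \(\lambda=e^{2\pi i\alpha}\neq1\).

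Next I show that \(\lambda\) is not a root of unity. Suppose \(f\) is an eigenfunction of \(U_X\) with eigenvalue \(\lambda\) on the \(X\)-marginal. Then \(f\circ\pi_X\) is an eigenfunction of the joint Koopman operator with eigenvalue \(\lambda\), since \(\pi_X\circ T=T_X\circ\pi_X\). Hence \(\lambda\in\Sigma(P)\). The joint system is ergodic and totally ergodic, so by the characterization recalled in the Preliminaries its point spectrum contains no nontrivial roots of unity. Therefore \(\alpha\) is irrational. (Alternatively, total ergodicity passes to factors, so the marginals are TE and the same characterization applies to them directly.)

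The cyclic group \(\{\lambda^k:k\in\mathbb Z\}\) is contained in \(\Sigma_X(P)\cap\Sigma_Y(P)\). Because \(\alpha\) is irrational, this group is dense in \(\mathbb S^1\); this is Kronecker's/Weyl's theorem, or the pigeonhole argument that some \(\lambda^k\) lies within any \(\varepsilon\) of \(1\). The set \(I\setminus\{1\}\) is a nonempty open set, since removing one point from a nonempty open arc leaves a nonempty open set. So some \(\lambda^k\) lies in \(I\setminus\{1\}\), which gives \(P\in\mathcal C_I\).

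The only point needing care is the passage of the no-roots-of-unity property from the joint TE system to the shared marginal eigenvalues, i.e. the factor-map step above. Beyond that, everything is routine.
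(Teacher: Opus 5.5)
Your proof is correct and matches the paper's: you use total ergodicity to show the shared eigenvalue \(\lambda\neq1\) is not a root of unity, marginal ergodicity to normalize eigenfunctions so that powers of \(\lambda\) remain shared eigenvalues, and density of \(\{\lambda^k\}\) to land in \(I\setminus\{1\}\). The differences are cosmetic: you exclude roots of unity by lifting the marginal eigenfunction to the jointly totally ergodic system (the paper instead notes that the marginals inherit total ergodicity, the alternative you also mention), and you use the full cyclic group rather than positive powers only.
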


\begin{proof}
Since the constant function always yields the trivial eigenvalue \(1\) for both coordinate processes, we trivially have \(1\in\Sigma_X(P)\cap\Sigma_Y(P)\) for all \(P\in\mathcal E\), which immediately implies the inclusion \(\mathcal C_I\subseteq\mathcal C_{\mathrm{KR}}\cap\mathcal{E}_{\mathrm{TE}}\).
Conversely, let
\(P\in\mathcal C_{\mathrm{KR}}\cap\mathcal E_{\mathrm{TE}}\),
and choose a common eigenvalue \(\lambda\neq1\).
The coordinate systems inherit total ergodicity from the joint
system, so \(\lambda\) is not a root of unity.

By marginal ergodicity, coordinate eigenfunctions associated
with \(\lambda\) can be normalized to have modulus one.
Their positive integer powers are therefore nonzero
\(L^2\)-eigenfunctions associated with \(\lambda^m\), giving
\[
  \{\lambda^m:m\geq1\}
  \subseteq
  \bigl(\Sigma_X(P)\cap\Sigma_Y(P)\bigr)\setminus\{1\}.
\]
Since \(\lambda\) is not a root of unity, its positive powers
are dense in \(\mathbb S^1\), and hence intersect \(I\).
Thus \(P\in\mathcal C_I\), proving the reverse inclusion.
\end{proof}

\section{Proof details for Section 4}
\label{app:section4}

This appendix supplies the details of the score-adaptive shift-rank construction
in Section~4 and proves the positive testability result. We use the notation and empirical covariances of Section~4. Fix a
deterministic budget satisfying \(1\leq K_N\leq N\) and \(K_N\to\infty\),
and set
\begin{equation}
 N=\lfloor n/6\rfloor,\qquad
 \mathcal I_N=\{2N+1,\ldots,4N\},\qquad
 \mathcal S_N=\{-N,\ldots,N\}.
 \label{app:s4:windows}
\end{equation}
For \(n<6\), return \(\varphi_n=0\). For \(N\geq1\), write
\begin{align}
 V_N(s)&=\sum_{r=1}^{K_N}2^{-r}|\widehat c_{r,N}(s)|,
 &p_N&=\frac1{2N+1}\sum_{s=-N}^{N}\mathbf1\{V_N(s)\geq V_N(0)\},
 \label{app:s4:rank}\\
 u_N&=\min\{\alpha_0/2,V_N(0)/4\},
 &\varphi_n&=\mathbf1\{p_N\leq u_N\}.
 \label{app:s4:test}
\end{align}
Here \(\alpha_0\in(0,1)\). When proving stationarity in the shift index,
extend the same full-path formulas to all \(s\in\mathbb Z\); the test
itself uses only \(s\in\mathcal S_N\).

\subsection{Cylinder enumeration, completeness, and finite-record validity}
\label{app:s4:cylinders}

For a nonnegative integer \(m\), let \(\mathcal A_{X,m}\) be the collection
of all events determined by \(X_{-m:m}\), and define
\(\mathcal A_{Y,m}\) analogously. These collections are finite: an event
determined by \(X_{-m:m}\) is the inverse image of a subset of the finite
set \(\{0,1\}^{2m+1}\). Thus
\[
 \mathcal A_X=\bigcup_{m\geq0}\mathcal A_{X,m},\qquad
 \mathcal A_Y=\bigcup_{m\geq0}\mathcal A_{Y,m}
\]
are countable algebras generating the respective path \(\sigma\)-fields.
Every event depending on any finite set of time indices belongs to one of
these algebras, since its support is contained in some \([-m,m]\).

\begin{lemma}
\label{app:s4:enumeration}
There is a deterministic sequence \((A_r,B_r)_{r\geq1}\) that contains every
pair in \(\mathcal A_X\times\mathcal A_Y\) at least once and satisfies
\[
 A_r\in\mathcal A_{X,r},\qquad B_r\in\mathcal A_{Y,r}.
\]
\end{lemma}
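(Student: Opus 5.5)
The plan is a standard diagonal enumeration that exploits two facts: the nesting \(\mathcal A_{X,m}\subseteq\mathcal A_{X,m+1}\) (and likewise for \(Y\)), and the finiteness of each level. The nesting holds because an event determined by \(X_{-m:m}\) is also determined by \(X_{-(m+1):(m+1)}\); it is the inverse image of the corresponding cylinder subset of \(\{0,1\}^{2m+3}\). Finiteness holds because, as noted above, \(|\mathcal A_{X,m}|\le 2^{2^{2m+1}}\).

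\textbf{Step 1: list the pairs level by level.} For each \(m\geq0\), fix a deterministic ordering of the finite set
\[
\mathcal P_m:=\mathcal A_{X,m}\times\mathcal A_{Y,m},
\]
for instance lexicographic in the defining subsets of \(\{0,1\}^{2m+1}\). Form the concatenated list \(L=(\mathcal P_0,\mathcal P_1,\mathcal P_2,\ldots)\). Every pair \((A,B)\in\mathcal A_X\times\mathcal A_Y\) appears in \(L\). Indeed, \(A\in\mathcal A_{X,m_1}\) and \(B\in\mathcal A_{Y,m_2}\) for some \(m_1,m_2\), so by nesting \((A,B)\in\mathcal P_{\max(m_1,m_2)}\).

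\textbf{Step 2: slow the list down so the support constraint holds.} The concatenated list advances through levels far faster than its index grows, so its \(r\)-th entry may lie outside \(\mathcal P_r\). To fix this, define \((A_r,B_r)\) recursively, keeping a pointer into \(L\). At step \(r\), inspect the next unused entry of \(L\), and suppose it lies in \(\mathcal P_m\).
\begin{itemize}
\item If \(m\le r\), output that entry and advance the pointer.
\item Otherwise, output the trivial pair \((\Omega_X,\Omega_Y)\in\mathcal P_0\subseteq\mathcal P_r\) and do not advance.
\end{itemize}
By nesting, every output satisfies \(A_r\in\mathcal A_{X,r}\) and \(B_r\in\mathcal A_{Y,r}\). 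A simpler alternative avoids padding: let \(r\mapsto m(r)\) be any nondecreasing map with \(m(r)\le r\) that stays at each value \(m\) for exactly \(|\mathcal P_m|\) consecutive indices, and enumerate \(\mathcal P_{m(r)}\) along those indices. This map is well defined and satisfies \(m(r)\le r\), because \(|\mathcal P_m|\ge1\) for every \(m\). Hence the block for level \(m\) starts at index at least \(m\) when counting from \(r=0\), and at least \(m+1\) when counting from \(r=1\).

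\textbf{Step 3: check that every pair is reached.} This is the only point needing care. The concern in the padding version is that it might stall forever. It does not: once \(r\) exceeds the level of the current pending entry, that entry is output. So each entry of \(L\) is output after finitely many steps, and by induction every entry of \(L\), and hence every pair, appears. In the block version this is immediate, since each finite block is enumerated in full. Either way the sequence depends only on the fixed orderings and is therefore deterministic. Repetitions are allowed, since the lemma only asks that each pair occur at least once.
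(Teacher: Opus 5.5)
Your proof is correct, but it is organized differently from the paper's.

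\textbf{The paper's route.} It starts from an arbitrary enumeration $(C_j,D_j)_{j\ge1}$ of all cylinder pairs and records each pair's support radius $\rho_j$. At stage $r$ it greedily outputs the unselected pair of smallest index with $\rho_j\le r$, padding with $(\Omega_X,\Omega_Y)$ if no such pair exists. Completeness follows because once $r\ge\max_{i\le j}\rho_i$, the first $j$ pairs are all eligible and are exhausted within $j$ further stages.

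\textbf{Your route.} You build the list level by level and use nesting together with finiteness of each $\mathcal P_m$. With this level-ordered list, your padding branch is never triggered. The entry in position $k$ of $L$ lies in a block $m$ with $k\ge 1+\sum_{i<m}|\mathcal P_i|\ge m+1$. So, by induction, the pending entry at step $r$ sits in a block $m\le r-1$. Your two versions therefore coincide. The block version is the cleaner statement, since it needs no stalling argument at all.

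\textbf{What the paper's route buys.} Its greedy rule uses only that each pair has finite support, not that each level is finite. The same holds for your pointer-with-padding rule if you run it on an arbitrary enumeration instead of the level-ordered one. Either rule therefore applies verbatim to any countable family of finite-window pairs, such as the one the paper's footnote invokes for real-valued observations. There, a level-by-level concatenation of possibly infinite levels would not yield a sequence indexed by $\mathbb{N}$.
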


\begin{proof}
Start with any enumeration \((C_j,D_j)_{j\geq1}\) of all cylinder pairs.
Let \(\rho_j\) be the least nonnegative integer such that both events in
this pair are determined by coordinates in \([-\rho_j,\rho_j]\).
At stage \(r\), choose the pair with smallest original index \(j\) that
has not previously been chosen and satisfies \(\rho_j\leq r\). If there
is no such pair, use the fixed trivial pair \((\Omega_X,\Omega_Y)\).
Every pair used at stage \(r\) has the required support.

To verify completeness, fix \(j\) and choose
\(r_0\geq\max_{1\leq i\leq j}\rho_i\). From stage \(r_0\) onward,
all unselected pairs among the first \(j\) original pairs are eligible.
As long as any remain, the earliest eligible unselected pair has original
index at most \(j\). Therefore all first \(j\) pairs have been selected
after at most \(j\) additional stages. In particular, the \(j\)th pair
eventually occurs.
\end{proof}

Fix such an enumeration, and put
\begin{equation}
 F_r=\mathbf1_{A_r},\qquad G_r=\mathbf1_{B_r},\qquad
 c_r(s)=\mathbb E_P[F_r(G_r\circ T^s)]-\mathbb E_PF_r\,\mathbb E_PG_r,
 \qquad \Delta_P=\sum_{r\geq1}2^{-r}|c_r(0)|.
 \label{app:s4:population}
\end{equation}
Stationarity makes \(\mathbb E_P[G_r\circ T^s]=\mathbb E_PG_r\), so
\(c_r(s)\) is the covariance in Section~4.

\begin{lemma}
\label{app:s4:completeness}
For every probability law \(P\) on the joint path space,
\(\Delta_P=0\) if and only if \(P=P_X\otimes P_Y\). Moreover,
\(0\leq\Delta_P\leq1/4\).
\end{lemma}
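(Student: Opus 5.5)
The plan is to handle the bound first, then the two directions of the equivalence. For the bound, each \(c_r(0)\) is the covariance of two indicators, \(P(A_r\cap B_r)-P(A_r)P(B_r)\). Writing \(a=P(A_r)\), \(b=P(B_r)\) and \(q=P(A_r\cap B_r)\), we have \(\max(0,a+b-1)\le q\le\min(a,b)\), so
\[
 |q-ab|\le\max\bigl\{\min(a,b)-ab,\;ab-\max(0,a+b-1)\bigr\}.
\]
Each term is at most \(1/4\); for example, \(\min(a,b)-ab\le a(1-a)\le 1/4\) when \(a\le b\). Equivalently, Cauchy--Schwarz gives \(|\operatorname{Cov}|\le\sqrt{\operatorname{Var}F_r\operatorname{Var}G_r}\le 1/4\). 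Summing \(2^{-r}\cdot\tfrac14\) over \(r\ge1\) yields \(0\le\Delta_P\le 1/4\).

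For the equivalence, the forward direction is the easy one. If \(P=P_X\otimes P_Y\), then each \(A_r\) is an \(X\)-path event and each \(B_r\) is a \(Y\)-path event. They are therefore independent under \(P\), so every \(c_r(0)=0\) and \(\Delta_P=0\).

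For the converse, suppose \(\Delta_P=0\). Since all weights are positive, \(c_r(0)=0\) for every \(r\). By Lemma~\ref{app:s4:enumeration}, every pair \((C,D)\in\mathcal A_X\times\mathcal A_Y\) appears as some \((A_r,B_r)\), so \(P(C\cap D)=P(C)P(D)=(P_X\otimes P_Y)(C\times D)\) for all such cylinder pairs. The rectangles \(C\times D\) with \(C\in\mathcal A_X\) and \(D\in\mathcal A_Y\) form a \(\pi\)-system, because \(\mathcal A_X\) and \(\mathcal A_Y\) are algebras. This \(\pi\)-system generates the product \(\sigma\)-field, since \(\mathcal A_X\) and \(\mathcal A_Y\) generate the respective path \(\sigma\)-fields. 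Hence \(P\) and \(P_X\otimes P_Y\) are two probability measures agreeing on a generating \(\pi\)-system, and Dynkin's \(\pi\)-\(\lambda\) theorem gives \(P=P_X\otimes P_Y\).

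The only point needing care is the identification of the path spaces: \(A_r\) lives on the joint path space as an event depending only on the \(X\)-coordinates, and its probability under \(P\) equals \(P_X(A_r)\). Once that is noted, I expect no real obstacle; the argument is a routine \(\pi\)-\(\lambda\) uniqueness step combined with the completeness of the enumeration.
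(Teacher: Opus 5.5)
Your proof is correct and follows the paper's argument. The bound comes from Cauchy--Schwarz on indicator covariances, and your Fr\'echet-bound computation is a valid alternative; the converse uses completeness of the enumeration from Lemma~\ref{app:s4:enumeration} followed by a \(\pi\)-\(\lambda\) argument. The only cosmetic difference is that you apply uniqueness of measures once, on the \(\pi\)-system of cylinder rectangles \(C\cap D\). The paper instead performs two successive \(\lambda\)-system extensions (first in \(B\), then in \(A\)) to obtain independence of \(\mathcal F_X\) and \(\mathcal F_Y\), which it then equates with \(P=P_X\otimes P_Y\).
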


\begin{proof}
Independence gives \(c_r(0)=0\) for every \(r\). Conversely, if
\(\Delta_P=0\), positivity of the weights gives \(c_r(0)=0\) for every
\(r\). Completeness of the enumeration implies
\[
 P(A\cap B)=P(A)P(B),\qquad A\in\mathcal A_X,\ B\in\mathcal A_Y.
\]
Fix \(A\in\mathcal A_X\). The collection of \(B\in\mathcal F_Y\)
satisfying this identity is a \(\lambda\)-system: it contains the whole
space and is closed under complements and countable disjoint unions.
It contains the generating \(\pi\)-system \(\mathcal A_Y\), so the
\(\pi\)-\(\lambda\) theorem extends the identity to every
\(B\in\mathcal F_Y\). For each such \(B\), apply the same argument to
the class of \(A\in\mathcal F_X\) satisfying the identity. This proves
independence of \(\mathcal F_X\) and \(\mathcal F_Y\), equivalently
\(P=P_X\otimes P_Y\).

For indicators, Cauchy--Schwarz gives
\[
 |\operatorname{Cov}_P(F_r,G_r)|
 \leq\sqrt{\operatorname{Var}_P(F_r)\operatorname{Var}_P(G_r)}
 \leq\tfrac14.
\]
Since \(\sum_{r\geq1}2^{-r}=1\), the asserted bound follows.
\end{proof}

\begin{lemma}
\label{app:s4:record}
For \(r\leq K_N\), \(t\in\mathcal I_N\), and
\(s\in\mathcal S_N\), every coordinate needed to evaluate
\(F_r(T_X^tX)\) and \(G_r(T_Y^{t+s}Y)\) belongs to \(\{1,\ldots,n\}\).
\end{lemma}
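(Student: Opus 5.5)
The plan is a direct index computation from the support properties in Lemma~\ref{app:s4:enumeration} and the window choices in \eqref{app:s4:windows}. Since \(A_r\in\mathcal A_{X,r}\), the indicator \(F_r\) depends only on \(X_{-r:r}\). Hence \(F_r(T_X^tX)\) depends only on \((T_X^tX)_{-r:r}=X_{t-r:t+r}\). Likewise \(G_r(T_Y^{t+s}Y)\) depends only on \(Y_{t+s-r:t+s+r}\). It therefore suffices to show that both index ranges lie in \(\{1,\ldots,n\}\) whenever \(r\le K_N\), \(t\in\mathcal I_N\), and \(s\in\mathcal S_N\).

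First I bound the extremes. We have \(r\le K_N\le N\), \(2N+1\le t\le 4N\), and \(-N\le s\le N\), so \(t+s\in[N+1,5N]\). The smallest index needed is then at least
\[
\min\{t-r,\;t+s-r\}\ge (N+1)-N=1,
\]
and the largest is at most
\[
\max\{t+r,\;t+s+r\}\le 5N+N=6N\le n,
\]
where the last step uses \(N=\lfloor n/6\rfloor\). The case \(n<6\), where \(N=0\), is vacuous because the test returns \(0\) and \(\mathcal I_N\) is empty.

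The only point needing care is the convention that \(T^t\) is the left shift \((Tx)_u=x_{u+1}\), so that composing with \(T^t\) translates the support by \(+t\) and not \(-t\). I will state this explicitly. The hypothesis \(K_N\le N\) is used essentially in both bounds, and it is exactly what the margins \(2N\) on the left and \(2N\) on the right of \(\mathcal I_N\) inside \([1,6N]\) accommodate. I expect no real obstacle beyond this bookkeeping.
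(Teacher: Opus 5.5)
Your proposal is correct and is essentially the paper's proof. Both use the supports \(X_{t-r:t+r}\) and \(Y_{t+s-r:t+s+r}\) and the bounds \(r\le K_N\le N\), \(t\in[2N+1,4N]\), \(|s|\le N\) to place every index in \([1,6N]\subseteq[1,n]\). Your extra remarks on the left-shift convention and the vacuous case \(N=0\) are harmless bookkeeping.
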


\begin{proof}
The first event uses indices in \([t-r,t+r]\). As \(r\leq N\), these
indices lie in
\[
 [2N+1-N,\,4N+N]=[N+1,5N]\subseteq[1,n].
\]
The second event uses indices in \([t+s-r,t+s+r]\), which lie in
\[
 [2N+1-N-N,\,4N+N+N]=[1,6N]\subseteq[1,n].
\]
The last inclusion follows from \(6N\leq n\). The empirical means use
the same evaluations, so no additional indices are required. In particular,
the construction uses actual record entries and involves no circular wrapping
or extension of the observed data.
\end{proof}

\subsection{Almost-sure convergence of the score at the original alignment}
\label{app:s4:score-limit}

For a bounded observable \(h\), let
\(C_mh=m^{-1}\sum_{t=1}^m h\circ T^t, \quad D_Nh=\frac1{2N}\sum_{t=2N+1}^{4N}h\circ T^t.\) Under ergodicity,
\begin{equation}
 D_Nh
 =2C_{4N}h-C_{2N}h\longrightarrow\mathbb E_Ph
 \quad\text{almost surely and in }L^2(P).
 \label{app:s4:moving-window}
\end{equation}

Write the empirical covariance, on the full path space, as
\begin{equation}
 \widehat c_{r,N}(s)=
 \frac1{2N}\sum_{t\in\mathcal I_N}F_r(T_X^tX)G_r(T_Y^{t+s}Y)
 -\left(\frac1{2N}\sum_{t\in\mathcal I_N}F_r(T_X^tX)\right)
  \left(\frac1{2N}\sum_{t\in\mathcal I_N}G_r(T_Y^{t+s}Y)\right).
 \label{app:s4:empirical}
\end{equation}
The formula is meaningful for every integer \(s\); the test computes it only
for \(s\in\mathcal S_N\).

\begin{proposition}[Score convergence]
\label{app:s4:original-limit}
If \(P\) is stationary and ergodic, then, for every fixed integer \(s\),
\[
 V_N(s):=\sum_{r=1}^{K_N}2^{-r}|\widehat c_{r,N}(s)|
 \longrightarrow
 \Delta_P(s):=\sum_{r\geq1}2^{-r}|c_r(s)|
 \quad\text{almost surely and in }L^1(P).
\]
In particular, \(V_N(0)\to\Delta_P\).
\end{proposition}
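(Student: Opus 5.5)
The proof has two stages. First, for each fixed \(r\) and fixed \(s\), we show \(\widehat c_{r,N}(s)\to c_r(s)\) almost surely. Then we pass to the weighted sum with growing budget \(K_N\), using the uniform bound \(|\widehat c_{r,N}(s)|\le 1/4\) (indicators in \([0,1]\)) together with the summable weights \(2^{-r}\).

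For the first step, fix \(r\) and \(s\), and note that \(F_r\), \(G_r\circ T^s\) and \(F_r\cdot(G_r\circ T^s)\) are bounded observables on the joint path space. Since \(T=T_X\times T_Y\) acts jointly, \(F_r(T_X^tX)G_r(T_Y^{t+s}Y)=h(T^tZ)\) with \(h:=F_r\cdot(G_r\circ T^s)\). Each of the three empirical averages in \eqref{app:s4:empirical} is therefore \(D_N\) applied to a bounded observable. By \eqref{app:s4:moving-window}, which rests on joint ergodicity of \(P\), these averages converge almost surely to \(\mathbb E_P[F_r(G_r\circ T^s)]\), \(\mathbb E_PF_r\) and \(\mathbb E_P[G_r\circ T^s]=\mathbb E_PG_r\). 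Stationarity gives the last equality. Continuity of \((a,b,c)\mapsto a-bc\) then yields \(\widehat c_{r,N}(s)\to c_r(s)\) almost surely. Because \(r\) ranges over a countable set, we may intersect the full-measure events and obtain a single full-measure event on which convergence holds for every \(r\) simultaneously.

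For the second step, on that event fix \(R\) and take \(N\) large enough that \(K_N\ge R\). Then
\[
 \bigl|V_N(s)-\Delta_P(s)\bigr|\le\sum_{r=1}^{R}2^{-r}\bigl|\widehat c_{r,N}(s)-c_r(s)\bigr|+\sum_{r>R}2^{-r}\cdot\tfrac12 .
\]
Here the tail bound uses \(|\widehat c_{r,N}(s)|,|c_r(s)|\le 1/4\) for \(r\le K_N\); for \(r>K_N\) the empirical terms are absent and only \(|c_r(s)|\le 1/4\) enters. The first sum tends to zero by the first step. Letting \(N\to\infty\) and then \(R\to\infty\) gives almost sure convergence.

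\(L^1(P)\) convergence follows from bounded convergence, since \(0\le V_N(s)\le 1/4\) and \(0\le\Delta_P(s)\le 1/4\) (the latter as in Lemma~\ref{app:s4:completeness}). Taking \(s=0\) gives \(V_N(0)\to\Delta_P(0)=\Delta_P\).

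The only step that needs care is the tail interchange caused by the growing budget \(K_N\), and the uniform bound \(1/4\) together with the geometric weights handles it. The argument uses only joint ergodicity, not KNR.
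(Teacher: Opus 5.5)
Your proposal is correct and follows essentially the same argument as the paper. Both apply the moving-window ergodic average to the three bounded observables $F_r$, $G_r\circ T^s$ and $F_r(G_r\circ T^s)$, intersect the resulting full-measure events over countably many indices, truncate at $R$ with the uniform bound $1/4$ to get the tail term $2^{-R-1}$, let $N\to\infty$ and then $R\to\infty$, and use bounded convergence for the $L^1(P)$ limit.
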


\begin{proof}
For fixed \(r,s\), apply \eqref{app:s4:moving-window} to the three bounded
observables \(F_r\), \(G_r\circ T^s\), and
\(F_r(G_r\circ T^s)\). Equation~\eqref{app:s4:empirical} then gives
\(\widehat c_{r,N}(s)\to c_r(s)\) almost surely. A countable intersection
makes these conclusions simultaneous for all \(r\) and all fixed integer
\(s\).

The covariance under the empirical distribution of the \(2N\) observed
indicator pairs obeys the same bound as a population covariance:
\begin{equation}
 |\widehat c_{r,N}(s)|\leq\tfrac14,
 \qquad |c_r(s)|\leq\tfrac14,
 \qquad 0\leq V_N(s)\leq\tfrac14.
 \label{app:s4:bounded-scores}
\end{equation}
Indeed, empirical Cauchy--Schwarz bounds the covariance by the geometric
mean of the two empirical Bernoulli variances, each at most \(1/4\).Fix \(R\geq1\). For all sufficiently large \(N\), we have \(K_N\geq R\).
\begin{align}
 |V_N(s)-\Delta_P(s)|
 &\leq
 \sum_{r=1}^{R}2^{-r}
 \bigl|\widehat c_{r,N}(s)-c_r(s)\bigr|
 +\frac12\sum_{r>R}2^{-r}
 \notag\\
 &=
 \sum_{r=1}^{R}2^{-r}
 \bigl|\widehat c_{r,N}(s)-c_r(s)\bigr|
 +2^{-R-1}.
 \label{app:s4:original-score-tail}
\end{align}
First let \(N\to\infty\), and then \(R\to\infty\). This proves the
almost-sure limit; bounded convergence proves the \(L^1\) limit.
\end{proof}

\subsection{A counterexample for deterministic shrinking cutoffs}
\label{app:s4:shrinking-threshold}

The deterministic sequences
\[
 q_N=\frac{\lceil\sqrt{2N+1}\rceil}{2N+1},\qquad
 v_N=\frac1{2N+1}
\]
satisfy \(q_N\to0\) and \(v_N\downarrow0\), but
\(P\{q_N\leq v_N\}=0\) for every \(N\geq1\). Thus convergence to
zero alone does not justify comparison with a shrinking threshold, even
on the rank grid. For the actual statistic, the self-comparison gives
\(p_N\geq(2N+1)^{-1}\), so the choice \(v_N=(2N+1)^{-2}\) makes the
corresponding rank test never reject under any law.

\subsection{Properties of the score-adaptive threshold}
\label{app:s4:adaptive-threshold}

Under process independence, conditional on the entire \(X\)-path,
the shifted scores have the form
\(V_N(s)=H_{N,X}(T_Y^sY)\) for a measurable function \(H_{N,X}\),
while \(Y\) retains its stationary marginal law.
Hence \((V_N(s))_{s\in\mathbb Z}\) is conditionally stationary.
Applying the stationary-rank inequality under this conditional
law yields
\begin{equation}
    P\{p_N\leq u\mid X\}\leq 2u
    \quad P\text{-a.s.},\qquad 0<u\leq\tfrac12.
    \label{app:s4:null-calibration}
\end{equation}
Taking expectations gives \(P\{p_N\leq u\}\leq 2u\).
Here \(p_N\) is defined in \eqref{app:s4:rank}.
The auxiliary extension to all \(s\in\mathbb Z\) is used only
in this argument.
Lemma~\ref{app:s4:record} ensures that the rank actually computed
by the test is measurable with respect to \(Z_{1:n}\).

\begin{lemma}[Score-adaptive threshold]
\label{lem:score-adaptive-threshold}
Fix \(\alpha_0\in(0,1)\), and define
\[
 a(v)=\min\{\alpha_0/2,v/4\},\qquad
 u_N=a(V_N(0)),\qquad
 \varphi_n=\mathbf1\{p_N\leq u_N\}.
\]
Then the following hold.
\begin{enumerate}
 \item Under every independent stationary ergodic joint law,
 \(u_N\to0\) almost surely and \(P\{\varphi_n=1\}\to0\).
 \item Under every dependent stationary ergodic joint law,
 \[
 u_N\longrightarrow u_P:=\min\{\alpha_0/2,\Delta_P/4\}>0
 \quad\text{almost surely}.
 \]
 If additionally \(p_N\to0\) in probability, then
 \(P\{\varphi_n=1\}\to1\).
 \item The bound \(0\leq u_N\leq\alpha_0/2\) holds for every sample.
 Under every independent stationary law, without an ergodicity assumption,
 \(P\{\varphi_n=1\}\leq\alpha_0\) for every \(n\).
\end{enumerate}
\end{lemma}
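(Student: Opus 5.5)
The three parts rest on two earlier facts: Proposition~\ref{app:s4:original-limit}, giving \(V_N(0)\to\Delta_P\) almost surely under stationary ergodicity, and the conditional shift-rank inequality \eqref{app:s4:null-calibration}. The map \(a(v)=\min\{\alpha_0/2,v/4\}\) is continuous and nondecreasing with \(a(0)=0\) and \(a(v)>0\) for \(v>0\). Continuity of \(a\) and the score limit give \(u_N\to a(\Delta_P)\) almost surely in both parts 1 and 2. In part 1, Lemma~\ref{app:s4:completeness} gives \(\Delta_P=0\), so \(u_N\to0\). In part 2 it gives \(\Delta_P>0\), so \(u_P>0\).

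\emph{Part 1.} I would fix \(\varepsilon\in(0,1/2]\) and split
\[
\{\varphi_n=1\}\subseteq\{p_N\le\varepsilon\}\cup\{u_N>\varepsilon\}.
\]
The rank inequality gives \(P\{p_N\le\varepsilon\}\le2\varepsilon\). Since almost-sure convergence implies convergence in probability, \(P\{u_N>\varepsilon\}\to0\). Hence \(\limsup_n P\{\varphi_n=1\}\le2\varepsilon\), and letting \(\varepsilon\downarrow0\) finishes.

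\emph{Part 2.} On the event \(\{u_N>u_P/2\}\cap\{p_N\le u_P/2\}\) the test rejects. The first event has probability tending to one by the almost-sure limit of \(u_N\). The second has probability tending to one by the hypothesis \(p_N\to0\) in probability. A union bound on the complements gives \(P\{\varphi_n=1\}\to1\).

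\emph{Part 3.} The bound \(0\le u_N\le\alpha_0/2\) is immediate since \(V_N(0)\ge0\). For finite-sample level under an independent stationary law, I would use monotonicity: \(\{p_N\le u_N\}\subseteq\{p_N\le\alpha_0/2\}\), and \(\alpha_0/2\le1/2\). The inequality \eqref{app:s4:null-calibration} then gives \(P\{\varphi_n=1\}\le2\cdot\alpha_0/2=\alpha_0\). For \(n<6\), \(\varphi_n=0\) trivially.

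The main point to check is that the derivation of \eqref{app:s4:null-calibration} uses only stationarity of \(Y\) and independence, not ergodicity. Conditionally on the \(X\)-path, \(V_N(s)=H_{N,X}(T_Y^sY)\) is a stationary sequence in \(s\), and the Harris/Yuan stationary-rank inequality needs only stationarity. One subtlety is that \(u_N\) is data-dependent, which is why part 3 bounds it by the deterministic \(\alpha_0/2\) rather than applying the rank bound at \(u_N\). For the same reason, part 1 replaces \(u_N\) by the deterministic \(\varepsilon\) instead of plugging in the random threshold.
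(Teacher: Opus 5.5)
Your proposal is correct and follows the paper's proof essentially step for step: continuity of \(a\) combined with the almost-sure score limit for both limits of \(u_N\), the bound \(u_N\le\alpha_0/2\) plus the rank inequality for finite-sample level, a deterministic-\(\varepsilon\) split for vanishing Type~I error, and a split at \(\eta=u_P/2\) for power. The only point the paper states that you leave implicit is that \(N=\lfloor n/6\rfloor\to\infty\), which is what turns limits in \(N\) into limits in \(n\).
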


\begin{proof}
By Proposition~~\ref{app:s4:original-limit}, joint stationarity and ergodicity
give \(V_N(0)\to\Delta_P\) almost surely, with \(\Delta_P=0\) under
independence and \(\Delta_P>0\) under dependence. Continuity of \(a\)
therefore proves both asserted limits of \(u_N\).

For finite-sample control, \(u_N\leq\alpha_0/2\) gives
\[
 P\{\varphi_n=1\}
 \leq P\{p_N\leq\alpha_0/2\}\leq\alpha_0,
\]
by \eqref{app:s4:null-calibration}. For \(n<6\), the test returns zero.

Under an ergodic null, for each deterministic
\(\varepsilon\in(0,1/2]\),
\[
 P\{\varphi_n=1\}
 \leq P\{u_N>\varepsilon\}+P\{p_N\leq\varepsilon\}
 \leq P\{u_N>\varepsilon\}+2\varepsilon.
\]
First let \(N\to\infty\), and then \(\varepsilon\downarrow0\).
This proves the first claim using only deterministic cutoffs in the
rank inequality.

Under a dependent ergodic law with \(p_N\to0\) in probability, put
\(\eta=u_P/2>0\). Since \(u_N\to u_P\) almost surely,
\[
 P\{\varphi_n=0\}
 =P\{p_N>u_N\}
 \leq P\{u_N<\eta\}+P\{p_N>\eta\}\longrightarrow0.
\]
Finally, \(N=\lfloor n/6\rfloor\to\infty\), so all asymptotic
conclusions hold along the full sequence of record lengths.
\end{proof}

\subsection{Truncation of the weighted covariance score}
\label{app:s4:covariance-truncation}

\begin{lemma}[From fixed-feature second moments to the weighted score]
\label{lem:weighted-score-truncation}
Let \(K_N\to\infty\), and define
\[
 e_{r,N}
 :=\frac1{2N+1}\sum_{s\in\mathcal S_N}
      \mathbb E_P|\widehat c_{r,N}(s)|^2.
\]
If \(e_{r,N}\to0\) for every fixed \(r\geq1\), then
\[
 \mathbb E_P S_N\longrightarrow0,
 \qquad
 S_N:=\frac1{2N+1}\sum_{s\in\mathcal S_N}V_N(s).
\]
Consequently, \(S_N\to0\) in \(L^1(P)\) and in probability.
\end{lemma}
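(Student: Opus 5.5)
The plan is a standard truncation argument that combines linearity of expectation, Jensen (Cauchy--Schwarz) for each feature, and the uniform bound on empirical covariances.

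First, by linearity and nonnegativity,
\[
 \mathbb E_P S_N=\sum_{r=1}^{K_N}2^{-r}\,\frac1{2N+1}\sum_{s\in\mathcal S_N}\mathbb E_P|\widehat c_{r,N}(s)|.
\]
Applying Cauchy--Schwarz (Jensen for the uniform average over \(s\) together with the expectation) to each inner average gives
\[
 \frac1{2N+1}\sum_{s}\mathbb E_P|\widehat c_{r,N}(s)|\le\sqrt{e_{r,N}},
 \qquad\text{so}\qquad
 \mathbb E_P S_N\le\sum_{r=1}^{K_N}2^{-r}\sqrt{e_{r,N}}.
\]

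Next fix \(R\geq1\) and split the sum at \(R\). For \(r>R\), use the deterministic bound \(|\widehat c_{r,N}(s)|\le 1/4\) from \eqref{app:s4:bounded-scores}. This bound holds for every sample, so \(e_{r,N}\le 1/16\) and \(\sqrt{e_{r,N}}\le 1/4\), uniformly in \(N\) and \(r\). The tail is therefore at most \(\tfrac14\sum_{r>R}2^{-r}=2^{-R-2}\), independent of \(N\). For the head \(r\le R\), there are finitely many terms, each \(2^{-r}\sqrt{e_{r,N}}\to0\) by hypothesis; this is legitimate because \(K_N\) only truncates further, and \(K_N\ge R\) eventually anyway. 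Hence \(\limsup_N\mathbb E_P S_N\le 2^{-R-2}\), and letting \(R\to\infty\) gives \(\mathbb E_P S_N\to0\).

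Finally, \(S_N\ge0\), so \(\mathbb E_P S_N=\mathbb E_P|S_N|\to0\) is exactly \(L^1\) convergence, and Markov's inequality gives convergence in probability. The only point needing care is the uniformity of the tail bound in \(N\), which the deterministic bound \eqref{app:s4:bounded-scores} provides. Otherwise, interchanging \(N\to\infty\) with the growing budget \(K_N\) would not be justified. There is no real obstacle.
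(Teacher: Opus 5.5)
Your proposal is correct and follows essentially the same route as the paper. Both arguments apply Cauchy--Schwarz under the product of \(P\) and the uniform distribution on \(\mathcal S_N\) to get \(\sqrt{e_{r,N}}\), truncate at a fixed \(R\) using the sample-wise bound \(|\widehat c_{r,N}(s)|\le 1/4\) for an \(N\)-uniform tail of \(\tfrac14 2^{-R}\), let \(N\to\infty\) and then \(R\to\infty\), and finish with nonnegativity and Markov's inequality.
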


\begin{proof}
Cauchy--Schwarz's inequality, applied under the product of \(P\)
and the uniform distribution on \(\mathcal S_N\), gives
\[
 \frac1{2N+1}\sum_{s\in\mathcal S_N}
       \mathbb E_P|\widehat c_{r,N}(s)|
 \leq \sqrt{e_{r,N}}.
\]
Since the features are indicators, their empirical covariance
satisfies \(|\widehat c_{r,N}(s)|\leq1/4\), and hence
\(\sqrt{e_{r,N}}\leq1/4\).

Fix \(R\geq1\). Since \(K_N\to\infty\), eventually \(K_N\geq R\).
By linearity of expectation and the preceding bounds,
\begin{align}
 \mathbb E_P S_N
 &\leq \sum_{r=1}^{K_N}2^{-r}\sqrt{e_{r,N}}
 \notag\\
 &\leq
 \sum_{r=1}^{R}2^{-r}\sqrt{e_{r,N}}
 +\frac14\sum_{r=R+1}^{K_N}2^{-r}
 \notag\\
 &\leq
 \sum_{r=1}^{R}2^{-r}\sqrt{e_{r,N}}
 +\frac14\,2^{-R}.
 \label{app:s4:truncation-bound}
\end{align}
For fixed \(R\), the finite sum tends to zero as \(N\to\infty\).
Therefore
\[
 0\leq\limsup_{N\to\infty}\mathbb E_P S_N
 \leq\frac14\,2^{-R}.
\]
Letting \(R\to\infty\) proves \(\mathbb E_P S_N\to0\).
Since \(S_N\geq0\), this is convergence in \(L^1(P)\);
convergence in probability follows from Markov's inequality.
\end{proof}

\subsection{Centering the empirical covariance}
\label{app:s4:centering}

Fix a feature index \(r\) for the remainder of the analytic argument, and
center \emph{both} indicators:
\begin{equation}
 f=F_r-\mathbb E_PF_r,\qquad g=G_r-\mathbb E_PG_r.
 \label{app:s4:centered}
\end{equation}
We have \(|f|,|g|\leq1\) and \(\mathbb E_Pf=\mathbb E_Pg=0\).
Set
\begin{align}
 \overline f_N&=\frac1{2N}\sum_{t\in\mathcal I_N}f(T_X^tX),&
 \overline g_N(s)&=\frac1{2N}\sum_{t\in\mathcal I_N}g(T_Y^{t+s}Y),
 \notag\\
 A_N(s)&=\frac1{2N}\sum_{t\in\mathcal I_N}
                f(T_X^tX)g(T_Y^{t+s}Y).
 \label{app:s4:long-average}
\end{align}
Expanding \(F_r=f+\mathbb E_PF_r\) and
\(G_r=g+\mathbb E_PG_r\) in \eqref{app:s4:empirical} cancels the two
linear terms and the constant term, giving the exact identity
\begin{equation}
 \widehat c_{r,N}(s)=A_N(s)-\overline f_N\,\overline g_N(s).
 \label{app:s4:centering-identity}
\end{equation}

\begin{lemma}[The centering correction is uniformly negligible]
\label{app:s4:centering-lemma}
If \(P\) is stationary and ergodic, then
\begin{equation}
 \sup_{s\in\mathbb Z}
 \mathbb E_P|\widehat c_{r,N}(s)-A_N(s)|^2
 \leq\mathbb E_P|\overline f_N|^2\longrightarrow0.
 \label{app:s4:uniform-centering}
\end{equation}
In particular,
\begin{equation}
 \left|
 \frac1{2N+1}\sum_{s=-N}^{N}\mathbb E_P|\widehat c_{r,N}(s)|^2
 -\frac1{2N+1}\sum_{s=-N}^{N}\mathbb E_P|A_N(s)|^2
 \right|\longrightarrow0.
 \label{app:s4:same-energy}
\end{equation}
\end{lemma}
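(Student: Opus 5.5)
The plan is to start from the exact identity \eqref{app:s4:centering-identity}, which gives \(\widehat c_{r,N}(s)-A_N(s)=-\overline f_N\,\overline g_N(s)\). Since \(|g|\le1\), every average \(\overline g_N(s)\) satisfies \(|\overline g_N(s)|\le1\) pointwise, for every integer \(s\), because it is an average of \(2N\) values in \([-1,1]\). Hence
\[
 |\widehat c_{r,N}(s)-A_N(s)|^2\le|\overline f_N|^2
\]
pointwise, uniformly in \(s\in\mathbb Z\). Taking expectations gives the first inequality in \eqref{app:s4:uniform-centering}, and the right-hand side does not depend on \(s\).

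For the limit, note that \(\overline f_N=D_N f\) in the notation of \eqref{app:s4:moving-window}, with \(f\) a bounded observable of the joint process, namely \(F_r-\mathbb E_PF_r\) composed with the \(X\)-coordinate. Under joint stationarity and ergodicity, \eqref{app:s4:moving-window} gives \(D_Nf\to\mathbb E_Pf=0\) in \(L^2(P)\). Hence \(\mathbb E_P|\overline f_N|^2\to0\); bounded convergence from the almost-sure statement gives the same conclusion.

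For \eqref{app:s4:same-energy}, I would write \(a=\widehat c_{r,N}(s)\) and \(b=A_N(s)\) and use
\[
 \bigl||a|^2-|b|^2\bigr|\le|a-b|\,(|a|+|b|).
\]
Both quantities are bounded: \(|\widehat c_{r,N}(s)|\le1/4\) by \eqref{app:s4:bounded-scores}, and \(|A_N(s)|\le1\) since \(|f|,|g|\le1\). This gives \(\mathbb E_P\bigl||a|^2-|b|^2\bigr|\le \tfrac54\,\mathbb E_P|a-b|\le\tfrac54\bigl(\mathbb E_P|\overline f_N|^2\bigr)^{1/2}\) by Cauchy--Schwarz, uniformly in \(s\). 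Averaging over \(s\in\mathcal S_N\) keeps the same bound, which tends to zero.

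No step is a real obstacle. The one point to check is that the bound on the correction is uniform in \(s\), which holds because the shift enters only through \(\overline g_N(s)\), and that factor is bounded by \(1\) deterministically.
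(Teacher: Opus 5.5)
Your proposal is correct and follows the paper's proof essentially step for step. You use the exact centering identity together with the deterministic bound $|\overline g_N(s)|\le 1$, the moving-window ergodic limit for $\overline f_N$, and the estimate $\bigl||a|^2-|b|^2\bigr|\le|a-b|(|a|+|b|)$; your constant $5/4$ in place of the paper's $2$ is immaterial.
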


\begin{proof}
Equation~\eqref{app:s4:moving-window} and centeredness give
\(\overline f_N\to0\) almost surely and in \(L^2(P)\). Since
\(|\overline g_N(s)|\leq1\) for every \(s\), the exact identity
\eqref{app:s4:centering-identity} proves
\eqref{app:s4:uniform-centering} without a uniform ergodic theorem for the
shifted \(g\)-averages. Furthermore, \(|A_N(s)|\leq1\) and
\(|\widehat c_{r,N}(s)|\leq1/4\), so
\begin{align*}
 \left|\mathbb E_P|\widehat c_{r,N}(s)|^2
             -\mathbb E_P|A_N(s)|^2\right|
 &\leq\mathbb E_P\bigl[
 (|\widehat c_{r,N}(s)|+|A_N(s)|)
 |\widehat c_{r,N}(s)-A_N(s)|\bigr]\\
 &\leq2\,\|\overline f_N\|_{L^2(P)}.
\end{align*}
This bound is independent of \(s\). Averaging over the shifts proves
\eqref{app:s4:same-energy}.
\end{proof}

\subsection{Replacing a long average by short blocks}
\label{app:s4:blocks}

For a positive integer \(H\), define
\begin{equation}
 B_H(s)=\frac1H\sum_{t=0}^{H-1}f(T_X^tX)g(T_Y^{t+s}Y).
 \label{app:s4:short-average}
\end{equation}
The order of limits will always be: keep \(H\) fixed, let \(N\to\infty\),
and then let \(H\to\infty\).

\begin{lemma}[A uniform block bound]
\label{app:s4:block-lemma}
For every stationary joint law, every fixed \(H\), and every \(N\) with
\(2N\geq H\),
\begin{equation}
 \mathbb E_P|A_N(s)|^2
 \leq\mathbb E_P|B_H(s)|^2+\frac{H-1}{2N},
 \qquad s\in\mathbb Z.
 \label{app:s4:block-bound}
\end{equation}
\end{lemma}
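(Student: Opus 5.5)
The approach is to write the long average \(A_N(s)\) as an average of short block averages, and then use convexity together with joint stationarity. Put \(L=2N\) and \(a_t=f(T_X^tX)g(T_Y^{t+s}Y)\) for \(t\in\mathcal I_N=\{2N+1,\ldots,4N\}\), so that \(|a_t|\le1\). For each offset \(j\in\{0,\ldots,H-1\}\), tiling \(\mathcal I_N\) by consecutive blocks starting at \(2N+1+j+kH\) would leave edge pieces at both ends, which is awkward. I will instead use the sliding-window identity. For \(\tau\) ranging over the \(L+H-1\) starting positions \(2N+2-H,\ldots,4N\), let \(b_\tau=\frac1H\sum_{u=0}^{H-1}a_{\tau+u}\mathbf 1\{\tau+u\in\mathcal I_N\}\). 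Each index \(t\in\mathcal I_N\) lies in exactly \(H\) windows, so
\[
\sum_{t\in\mathcal I_N}a_t=\sum_{\tau}b_\tau .
\]
Only the \(L-H+1\) windows lying fully inside \(\mathcal I_N\) are genuine shifted copies of \(B_H(s)\). This gives the decomposition
\[
A_N(s)=\frac1L\sum_{\tau\ \mathrm{full}}\tilde B_\tau+\frac1L\sum_{\tau\ \mathrm{partial}}b_\tau,
\]
where \(\tilde B_\tau=\frac1H\sum_{u=0}^{H-1}a_{\tau+u}\).

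A cleaner route avoids partial windows altogether. Write \(A_N(s)=\frac{L-H+1}{L}M+R\), where \(M=\frac1{L-H+1}\sum_{\tau\ \mathrm{full}}\tilde B_\tau\). Counting how often each index appears shows that the remainder \(R\) collects the edge weights: the \(2(H-1)\) indices near the two ends each carry a coefficient deficit of at most \(1/H\) relative to \(1/L\). Hence \(|R|\le \frac{H-1}{L}\) almost surely.

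Convexity then gives \(|A_N(s)|^2\le \theta|M|^2+(1-\theta)|R/(1-\theta)|^2\) with \(\theta=\frac{L-H+1}{L}\). Since \(1-\theta=\frac{H-1}{L}\), the quantity \(R/(1-\theta)\) is bounded by \(1\) in modulus, so the second term contributes at most \(\frac{H-1}{L}=\frac{H-1}{2N}\). Jensen's inequality on \(M\) gives \(|M|^2\le\frac1{L-H+1}\sum_\tau|\tilde B_\tau|^2\). By joint stationarity, each \(\tilde B_\tau\) equals \(B_H(s)\circ T^\tau\) under the simultaneous shift, so \(\mathbb E_P|\tilde B_\tau|^2=\mathbb E_P|B_H(s)|^2\). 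Taking expectations and using \(\theta\le1\) yields the claimed bound. The hypothesis \(2N\ge H\) guarantees that at least one full window exists.

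The main obstacle I expect is the bookkeeping of the remainder \(R\). One must verify that \(R\) really equals \(\frac1L\sum_t(1-w_t)a_t\), where \(w_t\) is the fraction of full windows containing \(t\) times \(L/(L-H+1)\), and that the total absolute coefficient mass of \(R\) equals exactly \(1-\theta\). This follows by comparing \(\sum_t 1/L=1\) with the weights of the full-window average, whose total is \(\theta\) because every window term has mass \(1/H\). Since the coefficients are all nonnegative, the mass bound gives \(|R|\le1-\theta\) without any sign issues.
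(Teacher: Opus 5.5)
Your argument is correct, but it uses a different decomposition from the paper's.

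\textbf{The paper's route.} The paper tiles $\mathcal I_N$ disjointly from its left endpoint into $q=\lfloor 2N/H\rfloor$ consecutive blocks $C_j(s)=B_H(s)\circ T^{2N+1+jH}$, plus one leftover block $D(s)$ of $\ell<H$ products. This gives $A_N(s)=\frac{qH}{2N}C(s)+\frac{\ell}{2N}D(s)$ with $|D(s)|\le 1$. A single convexity step and Jensen over the $q$ blocks then yield the bound with remainder $\ell/(2N)\le (H-1)/(2N)$.

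\textbf{Your route.} You average over all $L-H+1$ overlapping full windows, which spreads the remainder over both ends of the interval. You then need the incidence count $\sum_t n_t=H(L-H+1)$, equivalently that the partial windows carry $H(H-1)$ incidences. This shows the remainder has nonnegative coefficients $\frac1L(1-n_t/H)$ of total mass exactly $(H-1)/L$.

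\textbf{Comparison.} Both routes rest on the same two ingredients: a convex combination isolating a bounded remainder, and Jensen plus joint stationarity for the identically distributed shifted blocks. The disjoint tiling is shorter and gives the slightly sharper remainder $\ell/(2N)$, which vanishes when $H\mid 2N$. Your sliding-window version always pays exactly $(H-1)/(2N)$, but its bookkeeping is symmetric and does not depend on divisibility. Your concern that tiling leaves edge pieces at both ends is unfounded: starting the tiling at $2N+1$ leaves a leftover piece only at the right end.

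\textbf{Small slips.}
\begin{itemize}
\item Your description of $w_t$, including the extra factor $L/(L-H+1)$, would require the coefficient $1-\theta w_t$, not $1-w_t$. The correct coefficient of $a_t$ in $R$ is simply $\frac1L(1-n_t/H)$, with $n_t$ the number of full windows containing $t$.
\item The heuristic that ``the $2(H-1)$ indices near the two ends each carry a deficit of at most $1/H$'' is imprecise. The two ends overlap when $L<2(H-1)$, and a per-index bound alone does not give $(H-1)/L$. What actually closes the step is the global mass count in your last paragraph.
\item The case $H=1$, where $\theta=1$ and $R=0$, should be handled directly rather than by dividing by $1-\theta$.
\end{itemize}
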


\begin{proof}
Write \(2N=qH+\ell\), where \(q\geq1\) and \(0\leq\ell<H\), and
put \(a_N=2N+1\). The \(q\) full blocks in \(\mathcal I_N\) have
averages
\[
 C_j(s)=\frac1H\sum_{i=0}^{H-1}
 f(T_X^{a_N+jH+i}X)g(T_Y^{a_N+jH+i+s}Y)
       =B_H(s)\circ T^{a_N+jH},\qquad 0\leq j<q.
\]
Let \(C(s)=q^{-1}\sum_{j=0}^{q-1}C_j(s)\). Joint stationarity and
Jensen's inequality give
\[
 \mathbb E_P|C(s)|^2
 \leq\frac1q\sum_{j=0}^{q-1}\mathbb E_P|C_j(s)|^2
 =\mathbb E_P|B_H(s)|^2.
\]
If \(\ell>0\), let \(D(s)\) be the average of the remaining \(\ell\)
products. Then \(|D(s)|\leq1\), and
\[
 A_N(s)=\frac{qH}{2N}C(s)+\frac{\ell}{2N}D(s).
\]
Convexity of squared modulus implies
\[
 \mathbb E_P|A_N(s)|^2
 \leq\frac{qH}{2N}\mathbb E_P|C(s)|^2
       +\frac{\ell}{2N}\mathbb E_P|D(s)|^2
 \leq\mathbb E_P|B_H(s)|^2+\frac{\ell}{2N}.
\]
For \(\ell=0\) the same bound holds with the remainder omitted. Since
\(\ell\leq H-1\), \eqref{app:s4:block-bound} follows. Its boundary
term is uniform over the relative shift \(s\).
\end{proof}

\subsection{Vanishing averaged covariance, score, and rank under KNR}
\label{app:s4:average-score}
\label{app:s4:product-limit}
\label{app:s4:spectral}
\label{app:score_limits}

\begin{proposition}[Vanishing shifted background]
\label{app:s4:covariance-vanishes}
\label{app:s4:score-vanishes}
\label{app:s4:rank-limit}
For every stationary ergodic KNR law,
\[
 e_{r,N}:=\frac1{2N+1}\sum_{s=-N}^{N}
       \mathbb E_P|\widehat c_{r,N}(s)|^2\longrightarrow0
 \quad\text{for each fixed }r,\qquad
 \mathbb E_PS_N\longrightarrow0,
\]
where \(S_N=(2N+1)^{-1}\sum_{s=-N}^{N}V_N(s)\).
If the law is dependent, then \(p_N\to0\) in probability.
\end{proposition}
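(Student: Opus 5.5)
The plan follows the chain already sketched in Section~4. Fix $r$ and let $f,g$ be the centered indicators of \eqref{app:s4:centered}. By Lemma~\ref{app:s4:centering-lemma}, $e_{r,N}$ has the same limiting behaviour as $(2N+1)^{-1}\sum_{s=-N}^{N}\mathbb E_P|A_N(s)|^2$. Lemma~\ref{app:s4:block-lemma} gives, for each fixed $H$ and all large $N$,
\[
 \frac{1}{2N+1}\sum_{s=-N}^N\mathbb E_P|A_N(s)|^2\le \frac{1}{2N+1}\sum_{s=-N}^N\mathbb E_P|B_H(s)|^2+\frac{H-1}{2N}.
\]
It therefore suffices to compute the limit of the averaged short-block energy as $N\to\infty$ with $H$ fixed, and then let $H\to\infty$.

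\textbf{Short-block energy.} Expanding the square gives
\[
 \frac1{H^2}\sum_{t,u=0}^{H-1}\mathbb E_P\Bigl[f_tf_u\cdot\frac1{2N+1}\sum_{s=-N}^N g_{t+s}g_{u+s}\Bigr].
\]
The inner average is a two-sided ergodic average of the bounded observable $g_tg_u$ along the shift. Since the joint law is ergodic, the marginal $Y$-system is ergodic. Hence the inner average converges $P$-a.s. to $\mathbb E_P[g_tg_u]$; I apply the two-sided ergodic theorem on the $Y$-path, which is legitimate because this is an invariant-$\sigma$-field statement for $P_Y$ lifted to $P$. Bounded convergence then gives the limit
\[
 \frac{1}{H^2}\sum_{t,u}\mathbb E_P[f_tf_u]\,\mathbb E_P[g_tg_u].
\]
This equals $\|H^{-1}\sum_{t<H}(U_X\otimes U_Y)^t(f\otimes g)\|^2_{L^2(Q)}$ with $Q=P_X\otimes P_Y$. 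The identification uses only the fact that the $X$-marginal of $P$ is $P_X$ and the $Y$-marginal is $P_Y$.

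\textbf{The KNR step.} By the preliminaries (the cited Quas result), marginal ergodicity together with $\Sigma_X\cap\Sigma_Y=\{1\}$ makes $Q$ ergodic under $U_X\otimes U_Y$. The von Neumann mean ergodic theorem then sends the Cesàro averages to $\mathbb E_Q[f\otimes g]\mathbf 1=\mathbb E f\,\mathbb E g=0$. Taking $\limsup_N$ first and then $H\to\infty$ yields $e_{r,N}\to0$. This limit interchange is the main point to check. It works because the block bound is uniform in $s$ and the error term $(H-1)/2N$ vanishes for fixed $H$.

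\textbf{Conclusions.} Lemma~\ref{lem:weighted-score-truncation} upgrades $e_{r,N}\to0$ for every $r$ to $\mathbb E_PS_N\to0$, hence $S_N\to0$ in probability. If $P$ is dependent, Proposition~\ref{app:s4:original-limit} gives $V_N(0)\to\Delta_P>0$ a.s. From \eqref{knr:eq:rank-from-average}, $p_N\le S_N/V_N(0)$ whenever $V_N(0)>0$. Then
\[
 P\{p_N>\varepsilon\}\le P\{V_N(0)<\Delta_P/2\}+P\{S_N>\varepsilon\Delta_P/2\}\to0,
\]
so $p_N\to0$ in probability.

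The hardest conceptual step is the passage to the product system, where the KNR hypothesis is used exactly once. The only delicate technical point is the ergodic-theorem step inside the expectation. I will apply it to the two-sided symmetric averages over $s\in[-N,N]$, writing them as sums of forward and backward averages, which both converge under invertibility.
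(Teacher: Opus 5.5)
Your proposal is correct and follows essentially the same route as the paper. The chain is identical: centering, the uniform block bound, the fixed-$H$ limit via the two-sided ergodic theorem and bounded convergence, and identification with the Ces\`aro average of $U_X\otimes U_Y$ in $L^2(Q)$. It then uses KNR-driven ergodicity of $Q$ with the mean ergodic theorem, the limit order $N\to\infty$ before $H\to\infty$, the truncation lemma, and finally $p_N V_N(0)\le S_N$. The only cosmetic difference is that you pass through $S_N\to0$ in probability, whereas the paper applies Markov's inequality directly to $\mathbb E_P S_N$.
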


\begin{proof}
Keep \(H\) fixed. Expanding the square and applying the two-sided ergodic
theorem to \(g_tg_u\), followed by bounded convergence, gives
\begin{align}
 \lim_{N\to\infty}\frac1{2N+1}\sum_{s=-N}^{N}
        \mathbb E_P|B_H(s)|^2
 &=\frac1{H^2}\sum_{t,u=0}^{H-1}
        \mathbb E_P[f_tf_u]\,\mathbb E_P[g_tg_u]\notag\\
 &=\left\|\frac1H\sum_{t=0}^{H-1}
      (U_X\otimes U_Y)^t(f\otimes g)\right\|_{L^2(Q)}^2,
 \qquad Q=P_X\otimes P_Y.
 \label{app:s4:fixed-block-limit}
\end{align}
Indeed, for each fixed \(t,u\),
\((2N+1)^{-1}\sum_{s=-N}^{N}g_{t+s}g_{u+s}\to\mathbb E_P[g_tg_u]\)
almost surely. The two-sided version follows by applying Birkhoff's
theorem to \(T\) and \(T^{-1}\). There are only \(H^2\) bounded terms,
and the two resulting marginal expectations factor under \(Q\).

Joint ergodicity implies marginal ergodicity. KNR therefore makes the
product system under \(Q\) ergodic by the common-eigenvalue criterion. By the mean ergodic theorem, the average in
\eqref{app:s4:fixed-block-limit} converges in \(L^2(Q)\) to
\(\mathbb E_Q[f(X)g(Y)]\mathbf1=0\), since both features are centered.
Combining \eqref{app:s4:same-energy}--\eqref{app:s4:fixed-block-limit},
for every fixed \(H\), yields
\[
 0\leq\limsup_{N\to\infty}e_{r,N}
 \leq\left\|\frac1H\sum_{t=0}^{H-1}
       (U_X\otimes U_Y)^t(f\otimes g)\right\|_{L^2(Q)}^2.
\]
Now let \(H\to\infty\). This proves \(e_{r,N}\to0\) with the required
order of limits: first \(N\to\infty\), then \(H\to\infty\).

Having established \(e_{r,N}\to0\) for every fixed \(r\),
Lemma~\ref{lem:weighted-score-truncation} yields
\(\mathbb E_P S_N\to0\) and hence \(S_N\to0\) in probability.

Finally, nonnegativity gives \(p_NV_N(0)\leq S_N\) pathwise. Under
dependence, \(\Delta_P>0\) and \(V_N(0)\to\Delta_P\) almost surely by
Proposition~\ref{app:s4:original-limit}. For every \(\varepsilon>0\),
\[
 P\{p_N>\varepsilon\}
 \leq P\{V_N(0)<\Delta_P/2\}
       +\frac{2\mathbb E_PS_N}{\varepsilon\Delta_P}\longrightarrow0.
\]
\end{proof}

\section{Proof details for Section 5}
\label{app:section5}

Fix a nonempty open arc \(I\subseteq\mathbb S^1\) and tests
\(\varphi_n:(\{0,1\}^2)^n\to[0,1]\) such that
\(\mathbb E_P\varphi_n\to1\) for every \(P\in\mathcal C_I\).
The value of a randomized test is its conditional rejection probability;
all arguments below therefore apply to randomized and deterministic tests.
We work throughout this appendix with the null class
\[
\mathcal N\cap\mathcal E_{\mathrm{TE}}
=\{P\in\mathcal E_{\mathrm{TE}}:P=P_X\otimes P_Y\}.
\]
We regard phases as elements of \(\mathbb R/\mathbb Z\), with Haar
probability \(m\), and set
\[
 R_\omega u=u+\omega,\qquad b(u)=\mathbf1_{[0,1/2)}(u),\qquad
 d_{\mathbb T}(u,v)=\min_{r\in\mathbb Z}|u-v-r|.
\]
Here \(d_{\mathbb T}\) denotes the circular distance used for the frequency
parameters in Section 5. Throughout,
\(d_{\mathrm{TV}}(\rho,\sigma)=\sup_E|\rho(E)-\sigma(E)|\).

\subsection{Rotation extensions and eigenvalues visible after observation}
\label{app:s5:extensions}

\begin{lemma}[Generic rotation extensions]
\label{app:s5:generic-extension}
Let \((\Xi,\rho,S)\) be an invertible totally ergodic system with separable
\(L^2(\rho)\). Outside a countable set of \(\omega\in\mathbb R/\mathbb Z\),
\(S\times R_\omega\) is totally ergodic. A product with an independent
two-sided Bernoulli shift, and every measurable factor, also preserve
total ergodicity.
\end{lemma}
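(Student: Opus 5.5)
The plan is to reduce everything to the spectral criterion for ergodicity of products already quoted in the preliminaries: for ergodic systems, a product is ergodic exactly when the two point spectra intersect only in \(\{1\}\). Total ergodicity of a system \(S\) means \(S^m\) is ergodic for every \(m\ge1\). For an ergodic system this is equivalent to its point spectrum containing no nontrivial roots of unity, since an eigenfunction with eigenvalue \(\lambda\), \(\lambda^m=1\), is \(S^m\)-invariant.

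\textbf{Step 1 (rotation extension).} Since \(\rho\) has separable \(L^2\), the point spectrum \(\Sigma(S)\) is countable, because eigenfunctions of distinct eigenvalues of a unitary operator are orthogonal. Let \(E=\{\omega: e^{2\pi i\omega}\text{ is a root of unity}\}\cup\{\omega:\ e^{2\pi i k\omega}\in\Sigma(S)\text{ for some }k\neq0\}\). For each fixed \(k\neq0\) and each \(\lambda\in\Sigma(S)\), the solutions \(\omega\) of \(e^{2\pi i k\omega}=\lambda\) form a finite set. Hence \(E\) is countable.

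For \(\omega\notin E\), \(R_\omega\) is ergodic with point spectrum \(\{e^{2\pi ik\omega}:k\in\mathbb Z\}\), which meets \(\Sigma(S)\) only in \(1\). So \(S\times R_\omega\) is ergodic by the product criterion. The point spectrum of a product of ergodic systems is the product set \(\Sigma(S)\cdot\{e^{2\pi ik\omega}\}\), since the eigenfunctions \(\phi\otimes e_k\) form an orthonormal basis of the discrete part. If \(\lambda e^{2\pi ik\omega}\) were a root of unity of order \(m\), then \(e^{2\pi i mk\omega}=\lambda^{-m}\in\Sigma(S)\). If \(k\neq0\), this contradicts \(\omega\notin E\); if \(k=0\), it contradicts total ergodicity of \(S\). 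Thus \(S\times R_\omega\) is totally ergodic.

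The delicate point is the identification of the point spectrum of the product. I would justify it by decomposing \(L^2(\rho)=H_{pp}\oplus H_c\) for \(U_S\). On \(H_c\otimes L^2(m)\) the tensor product has continuous spectral measure, because the spectral measure of \(h\otimes e_k\) is a rotation of a continuous measure. Hence no eigenvectors arise there.

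\textbf{Step 2 (Bernoulli product).} A two-sided Bernoulli shift is weakly mixing, so its point spectrum is \(\{1\}\). The product criterion gives ergodicity of the product. By the same spectral decomposition, the product's point spectrum equals that of the totally ergodic factor, which contains no nontrivial roots of unity. Therefore the product is totally ergodic.

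\textbf{Step 3 (factors).} If \(\pi\) is a factor map, then composing with \(\pi\) isometrically embeds \(L^2\) of the factor into \(L^2\) of the extension and intertwines the Koopman operators. Consequently, every eigenvalue of the factor is an eigenvalue of the extension, and every \(T^m\)-invariant set of the factor pulls back to a \(T^m\)-invariant set. Hence \(T^m\)-ergodicity and total ergodicity pass to factors.
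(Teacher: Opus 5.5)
Your argument is correct, but it takes a more spectral route than the paper.

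\textbf{Rotation extension.} The paper fixes \(q\ge1\) and expands an \(S^q\times R_{q\omega}\)-invariant \(F\) in Fourier series in the circle variable. Each coefficient then satisfies \(F_\ell\circ S^q=e^{-2\pi i\ell q\omega}F_\ell\). Discarding the countably many \(\omega\) with \(e^{-2\pi i\ell q\omega}\in\Sigma(S^q)\) forces \(F_\ell=0\) for \(\ell\ne0\), so every power is shown ergodic directly.

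\textbf{Bernoulli product.} The paper likewise checks ergodicity of \(S^q\times B^q\) by hand, using mixing of \(B^q\) on simple tensors and a Ces\`aro limit.

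\textbf{Your route.} You combine three ingredients: the product-ergodicity criterion, an explicit computation of the product's point spectrum via the pure-point/continuous splitting, and the characterization of total ergodicity by the absence of nontrivial root-of-unity eigenvalues. This gives more information, namely the full point spectrum \(\Sigma(S)\cdot\{e^{2\pi ik\omega}\}\) of the extension, and treats Steps 1 and 2 by a single principle. The paper's argument is more elementary and self-contained, and needs neither the product-spectrum identification nor the root-of-unity criterion.

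Three small points to tighten:
\begin{enumerate}
\item The reason you give for the total-ergodicity criterion (a root-of-unity eigenfunction is \(S^m\)-invariant) proves only the easy direction. You actually use the converse: an ergodic system with no nontrivial root-of-unity eigenvalue is totally ergodic. Either cite the reference in the preliminaries, or argue as follows. A nonconstant \(U^m\)-invariant \(f\) spans, together with \(Uf,\ldots,U^{m-1}f\), a finite-dimensional \(U\)-invariant space. On it \(U^m=\mathrm{id}\), and \(U\ne\mathrm{id}\) by ergodicity, so it contains an eigenvalue \(\zeta\ne1\) with \(\zeta^m=1\).
\item The step \(\lambda^{-m}\in\Sigma(S)\) uses that \(\Sigma(S)\) is a group. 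This holds because \(S\) is ergodic, so eigenfunctions can be normalized to modulus one, but it should be said.
\item In Step 2, the ``same'' decomposition does not literally apply to \(L^2(\rho)\otimes L^2_0\) of the Bernoulli factor, since neither tensor factor need be an eigenvector there. The cleanest fix is to apply the product criterion directly to \(S^m\times B^m\) for each \(m\). Here \(S^m\) is ergodic and \(B^m\) is again weakly mixing, so no computation of the product's point spectrum is needed at all.
\end{enumerate}
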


\begin{proof}
Fix \(q\geq1\). The eigenvalue set \(\Sigma(S^q)\) is countable:
eigenvectors of a unitary operator for distinct eigenvalues are orthogonal,
and \(L^2(\rho)\) is separable. If \(F\in L^2(\rho\otimes m)\) is
invariant under \(S^q\times R_{q\omega}\), its Fourier coefficients
\[
 F_\ell(x)=\int F(x,u)e^{-2\pi i\ell u}\,dm(u)
\]
satisfy
\[
 F_\ell(S^qx)=e^{-2\pi i\ell q\omega}F_\ell(x).
\]
For \(\ell=0\), ergodicity of \(S^q\) makes \(F_0\) constant.
For \(\ell\ne0\), a nonzero coefficient requires
\(e^{-2\pi i\ell q\omega}\in\Sigma(S^q)\). For each fixed
\(q,\ell\) this excludes only countably many \(\omega\), since the map
\(\omega\mapsto e^{-2\pi i\ell q\omega}\) has finite fibers.
Outside the union of these exceptional sets over \(q\geq1\) and
\(\ell\ne0\), all nonconstant Fourier coefficients vanish. Parseval's
identity makes \(F\) constant, proving total ergodicity. Rational
frequencies can also be excluded; every nonempty open arc still contains
an admissible irrational frequency.

For the Bernoulli assertion, let \(B\) be the independent Bernoulli
shift and fix \(q\geq1\). The transformation \(B^q\) is mixing: this
holds first for cylinder functions by independence of disjoint coordinate
sets, and then for all \(L^2\) functions by density. For simple tensors,
\[
 \langle f_1\otimes g_1,(U_S\otimes U_B)^{qn}(f_2\otimes g_2)\rangle
 =\langle f_1,U_S^{qn}f_2\rangle\langle g_1,U_B^{qn}g_2\rangle.
\]
Decompose the \(g_i\) into their means and centered parts. The terms
with a centered Bernoulli part tend to zero; the remaining term has the
constant-product Ces\`aro limit by ergodicity of \(S^q\). Density of
finite sums of tensors and the mean ergodic theorem give ergodicity of
\(S^q\times B^q\). This holds for every \(q\).
Finally, an event invariant under a power of a factor transformation
pulls back to an event invariant under the same power of the original
transformation, proving the factor assertion.
\end{proof}

\begin{lemma}[Eigenvalues survive an observable projection]
\label{app:s5:projection}
Let \(U\zeta=\lambda\zeta\) on an invertible probability-preserving
system, and let \(\mathcal G\) be a sub-\(\sigma\)-field invariant under
both the transformation and its inverse. Then
\[
 h:=\mathbb E[\zeta\mid\mathcal G]
 \quad\text{satisfies}\quad Uh=\lambda h.
\]
In particular, \(h\ne0\) if some \(Z\in L^2(\mathcal G)\) satisfies
\(\mathbb E[Z\overline\zeta]\ne0\).
Moreover, a jointly ergodic law whose coordinate processes share an
eigenvalue \(\lambda\ne1\) is dependent.
\end{lemma}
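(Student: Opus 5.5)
The three assertions can be proved in order. Each uses only the commutation of conditional expectation with the Koopman operator, plus the completeness argument behind Lemma~\ref{app:s4:completeness}.

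\emph{Eigenvalue equation.} Because \(\mathcal G\) is invariant under \(T\) and \(T^{-1}\), we have \(T^{-1}\mathcal G=\mathcal G\), and \(T\) preserves the measure. From this I will derive the commutation
\[
 \mathbb E[\xi\circ T\mid\mathcal G]=\mathbb E[\xi\mid\mathcal G]\circ T
\]
for \(\xi\in L^2\). Let \(W\in L^2(\mathcal G)\) be a test function. Then \(W\circ T^{-1}\) is again \(\mathcal G\)-measurable, and measure preservation gives
\[
 \mathbb E[(\xi\circ T)\,\overline W]=\mathbb E[\xi\,\overline{W\circ T^{-1}}]=\mathbb E[h_\xi\,\overline{W\circ T^{-1}}]=\mathbb E[(h_\xi\circ T)\,\overline W],
\]
where \(h_\xi=\mathbb E[\xi\mid\mathcal G]\). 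Since \(h_\xi\circ T\) is itself \(\mathcal G\)-measurable, uniqueness of conditional expectation gives the commutation identity, and it holds in particular for \(\xi=\zeta\). Linearity then yields
\[
 Uh=\mathbb E[U\zeta\mid\mathcal G]=\lambda h.
\]

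\emph{Nonvanishing.} The defining property of conditional expectation gives \(\mathbb E[Z\overline h]=\mathbb E[Z\overline\zeta]\) for \(Z\in L^2(\mathcal G)\). So if the right side is nonzero, then \(h\neq0\).

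\emph{Resonance forces dependence.} Let \(\xi_X\) and \(\xi_Y\) be eigenfunctions of \(U_X\) and \(U_Y\) for the common eigenvalue \(\lambda\neq1\). Marginal ergodicity holds by the cited proposition, so \(|\xi_X|\) is invariant and hence constant. I normalize so that \(|\xi_X|=|\xi_Y|=1\).

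Lift both to the joint system through the coordinate maps and set \(\eta=\xi_X\overline{\xi_Y}\). Then \(U\eta=\lambda\overline\lambda\,\eta=\eta\), using \(|\lambda|=1\). By joint ergodicity, \(\eta\) equals a constant \(c\) a.s., and \(|c|=1\).

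Now suppose, for contradiction, that \(P=P_X\otimes P_Y\). Then
\[
 c=\mathbb E_P[\eta]=\mathbb E_{P_X}[\xi_X]\,\overline{\mathbb E_{P_Y}[\xi_Y]}.
\]
But \(\mathbb E[\xi_X]=\mathbb E[U_X\xi_X]=\lambda\,\mathbb E[\xi_X]\) with \(\lambda\neq1\), so \(\mathbb E[\xi_X]=0\). This gives \(c=0\), which contradicts \(|c|=1\). Hence \(P\) is dependent.

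An alternative route to the same conclusion: under independence the product would be ergodic, and the criterion of \citet[Theorem~12(2)]{Quas2012} cited in the preliminaries forbids shared nontrivial eigenvalues. The direct argument above avoids that citation.

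\emph{Main obstacle.} The only delicate point is the commutation identity, and specifically that \(W\circ T^{-1}\) stays \(\mathcal G\)-measurable. This is exactly where invariance of \(\mathcal G\) under the inverse is needed. A secondary point is that the unit-modulus normalization requires ergodicity of the marginals, which we get from joint ergodicity.
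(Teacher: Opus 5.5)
Your proposal is correct and follows essentially the same route as the paper: you verify by hand, via test functions, that \(\mathbb E[\cdot\mid\mathcal G]\) commutes with \(U\), where the paper simply notes that \(L^2(\mathcal G)\) is invariant under \(U\) and \(U^{-1}\), so its orthogonal projection commutes with \(U\). Your remaining steps (the identity \(\mathbb E[Z\overline h]=\mathbb E[Z\overline\zeta]\), then unit-modulus normalization, an invariant product made constant by joint ergodicity, and a mean-zero contradiction under independence) are exactly the paper's, though your opening mention of the completeness argument of Lemma~\ref{app:s4:completeness} is never used and can be dropped.
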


\begin{proof}
The closed subspace \(L^2(\mathcal G)\) is invariant under both \(U\)
and \(U^{-1}\), so its orthogonal projection commutes with \(U\).
Also \(\mathbb E[Z\overline\zeta]=\mathbb E[Z\overline h]\), proving
nonvanishing. For an observed path \(Z\), a
\(\sigma(Z_t:t\in\mathbb Z)\)-measurable \(h\) is a measurable
function of that path. Shift commutation therefore makes it an
eigenfunction on the observed marginal, not merely on the latent system.

For the last assertion, normalize coordinate eigenfunctions \(f(X)\)
and \(g(Y)\) for \(\lambda\) to have modulus one; their moduli are
constant by marginal ergodicity. The product \(f(X)\overline{g(Y)}\)
is invariant, hence is a constant \(a\) of modulus one by joint
ergodicity. Both eigenfunctions have mean zero because \(\lambda\ne1\).
Independence would consequently give
\(a=\mathbb E[f(X)\overline{g(Y)}]=0\), a contradiction.
\end{proof}

\subsection{The torus prototype and finite-horizon perturbations}
\label{app:prototype-kr-te}

Choose an irrational \(\alpha\) with \(e^{2\pi i\alpha}\in I\).
For each fixed \(\beta\), let \(D_\beta^{\mathrm{rot}}\) be the law of
\[
 (A_t,B_t^{(\beta)})
 =\bigl(b(\Theta+t\alpha),b(\Theta+\beta+t\alpha)\bigr),
 \qquad \Theta\sim m.
\]
\begin{proposition}[The coded prototype]
\label{app:s5:prototype}
Every \(D_\beta^{\mathrm{rot}}\) belongs to \(\mathcal C_I\) and is
dependent. Its Haar average
\(\bar D^{\mathrm{rot}}:=\int D_\beta^{\mathrm{rot}}\,dm(\beta)\)
is the product of the two marginal path laws, but is not ergodic.
Furthermore,
\(\mathbb E_{\bar D^{\mathrm{rot}}}\varphi_n\to1\).
\end{proposition}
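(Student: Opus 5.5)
Four things need proof, and I will take them in order: (i) each \(D_\beta^{\mathrm{rot}}\) is totally ergodic with a shared eigenvalue in \(I\), (ii) each \(D_\beta^{\mathrm{rot}}\) is dependent, (iii) the Haar average is a non-ergodic product law, and (iv) the tests' power passes to the average.

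\emph{Total ergodicity.} View \(D_\beta^{\mathrm{rot}}\) as a factor of the latent rotation \((\mathbb S^1,m,R_\alpha)\) under the coding map \(\theta\mapsto(b(\theta+t\alpha),b(\theta+\beta+t\alpha))_{t\in\mathbb Z}\). This map intertwines \(R_\alpha\) with the joint shift. Because \(\alpha\) is irrational, every power \(R_\alpha^q=R_{q\alpha}\) is ergodic, so \(R_\alpha\) is totally ergodic. The factor clause of Lemma~\ref{app:s5:generic-extension} then gives \(D_\beta^{\mathrm{rot}}\in\mathcal E_{\mathrm{TE}}\).

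\emph{Shared eigenvalue.} Apply Lemma~\ref{app:s5:projection} to the latent eigenfunction \(\zeta(\theta)=e^{2\pi i\theta}\), which has eigenvalue \(\lambda=e^{2\pi i\alpha}\in I\setminus\{1\}\).
\begin{itemize}
\item For the \(X\)-coordinate, take \(\mathcal G=\sigma(A_t:t\in\mathbb Z)\) and the witness \(Z=A_0=b(\Theta)\). A direct computation gives \(\mathbb E[b(\Theta)e^{-2\pi i\Theta}]=\int_0^{1/2}e^{-2\pi iu}\,du=1/(\pi i)\neq0\).
\item For the \(Y\)-coordinate, use the latent eigenfunction \(\zeta_\beta(\theta)=e^{2\pi i(\theta+\beta)}\), which has the same eigenvalue \(\lambda\). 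The witness \(B_0^{(\beta)}=b(\Theta+\beta)\) has the same nonzero pairing after the change of variable \(u=\theta+\beta\).
\end{itemize}
The lemma therefore produces nonzero observed eigenfunctions for \(\lambda\) on both marginals, so \(\lambda\in\Sigma_X\cap\Sigma_Y\). Together with total ergodicity this places \(D_\beta^{\mathrm{rot}}\in\mathcal C_I\). The last assertion of Lemma~\ref{app:s5:projection} then gives dependence.

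\emph{Independence of the Haar average.} I use the Fubini computation already displayed in Section~\ref{knr:sec:torus-prototype}. The map \((\theta,\beta)\mapsto(\theta,\theta+\beta)\) preserves \(m\otimes m\). After this change of variables the pair \((X,Y)\) becomes the image of \((\theta,v)\sim m\otimes m\) under two separate coding maps, one depending only on \(\theta\) and one only on \(v\). Hence \(\bar D^{\mathrm{rot}}\) is the product of the laws of \((b(\Theta+t\alpha))_t\) and \((b(V+t\alpha))_t\), and these are exactly its two marginals.

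\emph{Non-ergodicity.} Let \(f,g\) be the observed \(\lambda\)-eigenfunctions from the spectral step, normalized to modulus one, which is possible by marginal ergodicity. Then \(f(X)\overline{g(Y)}\) is shift-invariant under the product law. Each factor has mean zero because \(\lambda\neq1\), so under the product law this invariant function has mean \(0\) but modulus \(1\), and therefore it is not constant. This is precisely the common-eigenvalue criterion quoted in the preliminaries.

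\emph{Power.} Here the only care needed is measurability in \(\beta\). For a fixed cylinder event \(E\), the map \(\beta\mapsto D_\beta^{\mathrm{rot}}(E)\) is the Lebesgue integral in \(\theta\) of a jointly measurable indicator, so it is measurable. Because \(\varphi_n\) is a function of finitely many binary coordinates, \(\mathbb E_{D_\beta^{\mathrm{rot}}}\varphi_n\) is a finite linear combination of such probabilities. It follows that
\[
\mathbb E_{\bar D^{\mathrm{rot}}}\varphi_n=\int\mathbb E_{D_\beta^{\mathrm{rot}}}\varphi_n\,dm(\beta).
\]
The integrands lie in \([0,1]\), and they tend to \(1\) for every \(\beta\) because each \(D_\beta^{\mathrm{rot}}\in\mathcal C_I\). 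Dominated convergence then gives \(\mathbb E_{\bar D^{\mathrm{rot}}}\varphi_n\to1\).

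The main obstacle is the spectral step, namely showing that the eigenvalue survives binary coding. The key is the explicit nonzero Fourier coefficient of \(b\).
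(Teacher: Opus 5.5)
Your proposal is correct and follows the paper's proof essentially step for step. The steps match in order: total ergodicity as a factor of the irrational rotation, observed $\lambda$-eigenfunctions via Lemma~\ref{app:s5:projection} using the nonzero Fourier coefficient $-i/\pi$ of $b$, and independence of the Haar average via the shear $(\theta,\beta)\mapsto(\theta,\theta+\beta)$. Non-ergodicity then comes from the invariant, mean-zero, unimodular function $f(X)\overline{g(Y)}$, and power transfers by measurability in $\beta$ plus dominated convergence. The only difference is cosmetic: for the $Y$-coordinate you use the shifted latent eigenfunction $e^{2\pi i(\theta+\beta)}$, whereas the paper keeps $\zeta$ and obtains the pairing $-\frac{i}{\pi}e^{2\pi i\beta}$; the two differ by a unimodular scalar.
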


\begin{proof}
The irrational rotation is totally ergodic: for every \(q\geq1\),
its \(q\)th power has eigenfunctions \(e^{2\pi i\ell u}\), and
\(e^{2\pi i\ell q\alpha}=1\) only for \(\ell=0\). Its coded pair
is a shift-commuting factor, so is totally ergodic as well.
Put \(\zeta=e^{2\pi i\Theta}\) and \(\lambda=e^{2\pi i\alpha}\).
Then \(U\zeta=\lambda\zeta\), and
\begin{equation}
 \mathbb E[A_0\overline\zeta]
 =\int_0^{1/2}e^{-2\pi iu}\,du=-\frac{i}{\pi},\qquad
 \mathbb E[B_0^{(\beta)}\overline\zeta]
 =-\frac{i}{\pi}e^{2\pi i\beta}.
 \label{app:s5:first-fourier}
\end{equation}
Lemma~\ref{app:s5:projection} gives a nonzero \(\lambda\)-eigenfunction
in each observed coordinate. Thus the pair belongs to \(\mathcal C_I\)
and is dependent.

Under independent Haar phases \(\Theta,\beta\), the map
\((\Theta,\beta)\mapsto(\Theta_X,\Theta_Y)=(\Theta,\Theta+\beta)\)
has law \(m\otimes m\). The two complete coded paths are therefore
independent. If \(f,g\) are normalized marginal
\(\lambda\)-eigenfunctions, then \(f(X)\overline{g(Y)}\) is an
invariant random variable of modulus one and mean zero under this
product law. It is nonconstant, proving nonergodicity.
For each \(n\), the map
\(\beta\mapsto\mathbb E_{D_\beta^{\mathrm{rot}}}\varphi_n\) is Borel:
it is the integral of a bounded jointly Borel function of the phase
and \(\beta\). The power assumption and dominated convergence give
\[
 \mathbb E_{\bar D^{\mathrm{rot}}}\varphi_n
 =\int\mathbb E_{D_\beta^{\mathrm{rot}}}\varphi_n\,dm(\beta)
 \longrightarrow1.
\]
No uniformity in \(\beta\) is required.
\end{proof}

Using independent Haar phases \(\Theta_X,\Theta_Y\), define
\[
 Q_\vartheta^{\mathrm{rot}}
 :=\mathcal L\bigl((b(\Theta_X+t\alpha),
                    b(\Theta_Y+t\vartheta))_{t\in\mathbb Z}\bigr).
\]
It is an independent stationary law for every \(\vartheta\), and
\(Q_\alpha^{\mathrm{rot}}=\bar D^{\mathrm{rot}}\).
If \(1,\alpha,\vartheta\) are linearly independent over \(\mathbb Q\),
every power of the torus translation by \((\alpha,\vartheta)\) is
ergodic. Indeed, its invariant characters would require
\(q(a\alpha+b\vartheta)\in\mathbb Z\) for some
\((a,b)\in\mathbb Z^2\setminus\{(0,0)\}\), which is excluded;
Fourier completeness then leaves only constants. Hence
\(Q_\vartheta^{\mathrm{rot}}\in\mathcal N\cap\mathcal E_{\mathrm{TE}}\).
The exceptional \(\vartheta\) form a countable set, so admissible
frequencies occur arbitrarily close to \(\alpha\).

\begin{lemma}[Finite-block total variation bound]
\label{app:tv-bound}
For every \(n\geq1\) and frequencies \(u,v\),
\begin{equation}
 d_{\mathrm{TV}}\bigl((Q_u^{\mathrm{rot}})_{1:n},
                       (Q_v^{\mathrm{rot}})_{1:n}\bigr)
 \leq n(n+1)d_{\mathbb T}(u,v).
 \label{app:s5:rotation-tv}
\end{equation}
For probability measures \(\rho,\sigma\) and a measurable
\(h\in[0,1]\),
\begin{equation}
 |\mathbb E_\rho h-\mathbb E_\sigma h|
 \leq d_{\mathrm{TV}}(\rho,\sigma).
 \label{app:s5:test-tv}
\end{equation}
\end{lemma}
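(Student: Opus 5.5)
The second inequality \eqref{app:s5:test-tv} is the standard one and I will dispose of it first. Write \(\rho-\sigma=(\rho-\sigma)^+-(\rho-\sigma)^-\) with a Hahn set \(E^\ast\), so that both parts have equal total mass \(d_{\mathrm{TV}}(\rho,\sigma)=(\rho-\sigma)(E^\ast)\). For measurable \(h\in[0,1]\) this gives
\[
-(\rho-\sigma)^-(\Omega)\le\int h\,d(\rho-\sigma)\le(\rho-\sigma)^+(\Omega),
\]
which bounds \(|\mathbb E_\rho h-\mathbb E_\sigma h|\) by \(d_{\mathrm{TV}}\). On the finite space \((\{0,1\}^2)^n\) this is just a finite sum split by the sign of \(\rho(z)-\sigma(z)\).

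For \eqref{app:s5:rotation-tv} I will use a coupling argument. I couple \((Q_u^{\mathrm{rot}})_{1:n}\) and \((Q_v^{\mathrm{rot}})_{1:n}\) by using the \emph{same} phases \(\Theta_X,\Theta_Y\sim m\otimes m\). The \(X\)-coordinates then agree identically, since both use frequency \(\alpha\). The blocks differ only if some \(t\in\{1,\dots,n\}\) has
\[
b(\Theta_Y+tu)\ne b(\Theta_Y+tv).
\]
By the coupling inequality, \(d_{\mathrm{TV}}\) is at most the probability of that event, and a union bound over \(t\) reduces the problem to the single-time probabilities.

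Fix \(t\). Choose representatives so that \(|u-v|=d_{\mathbb T}(u,v)=:\delta\), which is allowed since \(b\) is \(1\)-periodic. The points \(\Theta_Y+tu\) and \(\Theta_Y+tv\) differ by \(t\delta\) along an arc of length \(t\delta\). The indicator \(b\) changes value only if this arc contains one of the two discontinuity points \(0\) or \(1/2\) of \(b\) on the circle. Since \(\Theta_Y\) is uniform, the probability of that is at most \(\min\{1,2t\delta\}\le 2t\delta\). Summing gives
\[
\sum_{t=1}^n 2t\delta=n(n+1)\,d_{\mathbb T}(u,v),
\]
which is exactly the claimed constant.

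The only delicate point is the representative choice. The single-time estimate must use the actual displacement \(t(u-v)\) modulo \(1\), whose circular length is at most \(t\,d_{\mathbb T}(u,v)\). Bounding the crossing probability by that arc length handles this, because \(t\,d_{\mathbb T}(u,v)\ge d_{\mathbb T}(tu,tv)\). No further obstacle is expected; the argument is elementary once the common-phase coupling is chosen.
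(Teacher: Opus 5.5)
Your proposal is correct and follows essentially the paper's proof: couple the two laws with the same phases $\Theta_X,\Theta_Y$, bound the single-time disagreement probability by $2t\,d_{\mathbb T}(u,v)$, take a union bound over $1\le t\le n$, and apply the coupling inequality. The differences are cosmetic. The paper computes the single-time probability exactly as $m\bigl([0,1/2)\mathbin\triangle([0,1/2)-t(u-v))\bigr)=2d_{\mathbb T}(tu,tv)$ rather than by counting discontinuity crossings, and it proves the second inequality by integrating $\rho(h>s)-\sigma(h>s)$ over $s\in[0,1]$ rather than via a Hahn decomposition.
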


\begin{proof}
Couple the two laws using the same initial phases. Their left coordinates
agree. At time \(t\geq1\), the right bits disagree with probability
\[
 m\bigl([0,1/2)\mathbin\triangle([0,1/2)-t(u-v))\bigr)
 =2d_{\mathbb T}(tu,tv)\leq2t\,d_{\mathbb T}(u,v).
\]
A union bound over \(1\leq t\leq n\) and the coupling inequality give
\eqref{app:s5:rotation-tv}. The coupling inequality itself follows from
\(|\mathbb P(V\in E)-\mathbb P(W\in E)|\leq\mathbb P(V\ne W)\).
Finally, integrating
\(\rho(h>s)-\sigma(h>s)\) over \(0\leq s\leq1\) proves
\eqref{app:s5:test-tv}.
\end{proof}

In particular, given \(k\geq1\) and \(c<1\), first choose
\(n\geq k\) with
\(\mathbb E_{\bar D^{\mathrm{rot}}}\varphi_n>(1+c)/2\).
Then choose a rationally independent \(\vartheta\) with
\(n(n+1)d_{\mathbb T}(\vartheta,\alpha)<(1-c)/2\).
The resulting \(Q_\vartheta^{\mathrm{rot}}\in\mathcal N\cap\mathcal E_{\mathrm{TE}}\) has
\(\mathbb E_{Q_\vartheta^{\mathrm{rot}}}\varphi_n>c\).
A fixed law within this same torus family is constructed in
Appendix~\ref{app:fixed-torus-null} below.

\subsection{Sparse embedding near an arbitrary independent law}
\label{app:s5:embedding}

Fix \(P_0=\mu\otimes\nu\in\mathcal N\cap\mathcal E_{\mathrm{TE}}\), and let
\((X^0,Y^0)\sim P_0\). By Lemma~\ref{app:s5:generic-extension}, choose
an irrational \(\alpha\) such that
\(e^{2\pi i\alpha}\in I\) and \(P_0\times R_\alpha\) is totally
ergodic. This simultaneous requirement excludes only countably many
frequencies from the prescribed open arc.
Let \(M^X,M^Y\) be independent two-sided i.i.d.\ Bernoulli\((p)\)
processes, where \(0<p<1\), independent of
\((X^0,Y^0,\Theta)\), with \(\Theta\sim m\).
For each fixed \(\beta\), set
\begin{equation}
 \begin{aligned}
 X_t^{(\beta)}&=(1-M_t^X)X_t^0+M_t^Xb(\Theta+t\alpha),\\
 Y_t^{(\beta)}&=(1-M_t^Y)Y_t^0+M_t^Yb(\Theta+\beta+t\alpha),
 \end{aligned}
 \qquad t\in\mathbb Z,
 \label{app:s5:marker-definition}
\end{equation}
and let \(D_\beta\) be their joint path law.

\begin{proposition}[The planted eigenvalue survives sparse substitution]
\label{app:s5:marker-eigenvalue}
For every \(p\in(0,1)\) and every \(\beta\),
\(D_\beta\in\mathcal C_I\), and it is dependent.
Moreover, for any cylinder event \(C\) supported on a finite set of
time indices \(F\subset\mathbb Z\),
\begin{equation}
 |L(C)-P_0(C)|\leq2|F|p
 \label{app:s5:neighborhood-bound}
\end{equation}
for every law \(L\) formed by the same marker substitution, whatever
the input phases and frequencies. Thus the same bound holds for
\(D_\beta\), its Haar average, and every frequency-split law below.
\end{proposition}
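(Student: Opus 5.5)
The statement has three parts: \(D_\beta\) is totally ergodic, it carries a shared eigenvalue in \(I\) and is therefore dependent, and every marker-substituted law lies within \(2|F|p\) of \(P_0\) on cylinders supported on \(F\).

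\emph{Total ergodicity.} Realize \(D_\beta\) as a factor of the latent system
\[
 (X^0,Y^0,\Theta,M^X,M^Y)
 \quad\text{under}\quad
 P_0\times R_\alpha\times B_X\times B_Y ,
\]
where \(B_X,B_Y\) are the Bernoulli\((p)\) shifts. The observed pair \((X^{(\beta)}_t,Y^{(\beta)}_t)\) is a shift-equivariant measurable function of this latent path, since \(\beta\) is a fixed parameter. By the choice of \(\alpha\), \(P_0\times R_\alpha\) is totally ergodic. Lemma~\ref{app:s5:generic-extension} lets us multiply by the independent Bernoulli shifts, applying the Bernoulli assertion twice or once to their product, which is again a Bernoulli shift. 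It also says factors preserve total ergodicity. Hence \(D_\beta\in\mathcal E_{\mathrm{TE}}\).

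\emph{Shared eigenvalue.} Set \(\zeta=e^{2\pi i\Theta}\) and \(\lambda=e^{2\pi i\alpha}\in I\setminus\{1\}\), so \(U\zeta=\lambda\zeta\) on the latent system. Let \(\mathcal G_X\) be the \(\sigma\)-field generated by the observed \(X^{(\beta)}\)-path, which is invariant under the shift and its inverse. Apply Lemma~\ref{app:s5:projection} with \(Z=X_0^{(\beta)}\). By independence of \(\Theta\) from \((X^0,M^X)\),
\[
 \mathbb E\bigl[X_0^{(\beta)}\overline\zeta\bigr]
 =(1-p)\,\mathbb E[X^0_0]\,\mathbb E[\overline\zeta]
 +p\,\mathbb E\bigl[b(\Theta)\overline\zeta\bigr].
\]
Here \(\mathbb E[\overline\zeta]=0\), and \(\mathbb E[b(\Theta)\overline\zeta]=-i/\pi\) by \eqref{app:s5:first-fourier}. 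The product is therefore \(-ip/\pi\ne0\). The same computation for \(Y\) gives \(-ip\,e^{2\pi i\beta}/\pi\ne0\). Thus \(\lambda\) is a nonzero eigenvalue of both observed marginals, and since \(\lambda\in I\setminus\{1\}\), we get \(D_\beta\in\mathcal C_I\). Dependence follows from the last assertion of Lemma~\ref{app:s5:projection}, since \(D_\beta\) is jointly ergodic.

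\emph{Neighborhood bound.} Couple \(L\) with \(P_0\) by using the same \((X^0,Y^0)\) and the same markers, with arbitrary independent input bits. The two paths can differ on \(F\) only at times where some marker equals \(1\). A union bound over \(t\in F\) and the two coordinates gives
\[
 \mathbb P(\text{paths differ on }F)\le 2|F|p .
\]
The coupling inequality then yields \eqref{app:s5:neighborhood-bound}. This argument uses only that the markers are independent of the inputs, so it holds uniformly over phases and frequencies, and hence for Haar averages and for the split laws.

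\emph{Main obstacle.} The delicate step is showing the eigenfunction survives the projection onto the observed \(\sigma\)-field, i.e.\ the nonvanishing of the correlation computed above. The independence structure makes the background term vanish, so this step should go through.
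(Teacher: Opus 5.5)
Your proposal is correct and matches the paper's proof essentially step for step. It uses total ergodicity of the latent system \(P_0\times R_\alpha\) times the Bernoulli marker shifts via Lemma~\ref{app:s5:generic-extension}, then passes to the observed factor. It feeds the correlations \(-ip/\pi\) and \(-ip\,e^{2\pi i\beta}/\pi\) into Lemma~\ref{app:s5:projection} for both membership in \(\mathcal C_I\) and dependence. It finishes with the same-background coupling and a union bound over the \(2|F|\) relevant markers.

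One small refinement: the coupling bound needs only the Bernoulli\((p)\) marginals of the markers and the fact that the shared background has law \(P_0\). Independence of the markers from the inputs is not actually used.
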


\begin{proof}
The latent transformation is the product of \(P_0\times R_\alpha\)
with the two Bernoulli marker shifts. It is totally ergodic by
Lemma~\ref{app:s5:generic-extension}; the observation in
\eqref{app:s5:marker-definition} is a shift-commuting factor.

On the latent system put \(\zeta=e^{2\pi i\Theta}\) and
\(\lambda=e^{2\pi i\alpha}\). Independence and
\eqref{app:s5:first-fourier} give
\begin{equation}
 \mathbb E[X_0^{(\beta)}\overline\zeta]=-\frac{ip}{\pi},\qquad
 \mathbb E[Y_0^{(\beta)}\overline\zeta]
 =-\frac{ip}{\pi}e^{2\pi i\beta}.
 \label{app:s5:marker-fourier}
\end{equation}
The background terms vanish because \(\mathbb E\overline\zeta=0\).
Applying Lemma~\ref{app:s5:projection} to the two observed path
\(\sigma\)-fields gives nonzero marginal \(\lambda\)-eigenfunctions.
Since \(\lambda\in I\setminus\{1\}\), this proves membership in
\(\mathcal C_I\), and the same lemma proves dependence. In particular,
unobserved markers do not prevent the eigenvalue from being observable.

For the last assertion, couple the substituted law with \(P_0\) using
the same background. If none of the \(2|F|\) relevant markers fires,
the pairs agree on \(F\). The union bound and the coupling inequality
give \eqref{app:s5:neighborhood-bound}, independently of the input.
\end{proof}

Cylinder probabilities generate the weak topology on the finite-alphabet
path space: cylinders are clopen, and continuous functions are uniformly
approximable by functions of finitely many coordinates. Consequently,
given an ambient weak neighborhood \(G\) of \(P_0\), choose cylinders
\(C_1,\ldots,C_L\), with finite supports \(F_1,\ldots,F_L\), and
\(\eta>0\) so that
\[
 |L(C_\ell)-P_0(C_\ell)|<\eta\quad(1\leq\ell\leq L)
 \quad\Longrightarrow\quad L\in G.
\]
Choosing \(p>0\) with \(2p\max_\ell|F_\ell|<\eta\) places all of the
substituted laws in \(G\). If \(\mathcal O\) is a neighborhood relative
to \(\mathcal N\cap\mathcal E_{\mathrm{TE}}\), take \(G\) with
\(P_0\in G\) and \(G\cap(\mathcal N\cap\mathcal E_{\mathrm{TE}})\subseteq\mathcal O\);
only the final admissible null laws need belong to \(\mathcal O\).

\subsection{Haar averaging, the substitution kernel, and frequency splitting}
\label{app:s5:phase-mixture}

Define
\[
 \bar D:=\int D_\beta\,dm(\beta).
\]
As in the prototype, \(\Theta_X=\Theta\) and
\(\Theta_Y=\Theta+\beta\) are independent Haar phases after averaging.
The complete latent collections \((X^0,M^X,\Theta_X)\) and
\((Y^0,M^Y,\Theta_Y)\) are independent. Thus \(\bar D\) has independent
coordinate processes. Parameterized integration is measurable for the
same reason as in the prototype, so the power assumption yields
\begin{equation}
 \mathbb E_{\bar D}\varphi_n
 =\int\mathbb E_{D_\beta}\varphi_n\,dm(\beta)\longrightarrow1.
 \label{app:s5:mixture-power}
\end{equation}
The averaged law is used as an intermediate independent stationary law;
it need not be an ergodic member of the admissible null class.

For \(\vartheta\in\mathbb R/\mathbb Z\), define \(Q_\vartheta\) by
\begin{equation}
 \begin{aligned}
 X_t&=(1-M_t^X)X_t^0+M_t^Xb(\Theta_X+t\alpha),\\
 Y_t&=(1-M_t^Y)Y_t^0+M_t^Yb(\Theta_Y+t\vartheta),
 \end{aligned}
 \label{app:s5:split-definition}
\end{equation}
where the two phases are independent Haar variables, independent of
the background and markers. For every \(\vartheta\), this defines
independent coordinate processes, and the averaged representation gives
the exact identity
\begin{equation}
 Q_\alpha=\bar D.
 \label{app:s5:mixture-identity}
\end{equation}

\begin{lemma}[The substitution kernel and total variation contraction]
\label{app:markov-kernel}
\label{app:markov kernel}
There is a Markov kernel \(K_{p,n}\), depending on \(P_0,p,n\) but
not on the rotation frequencies, such that
\[
 (Q_\vartheta)_{1:n}=(Q_\vartheta^{\mathrm{rot}})_{1:n}K_{p,n},
 \qquad
 \bar D_{1:n}=(\bar D^{\mathrm{rot}})_{1:n}K_{p,n}.
\]
Consequently,
\begin{equation}
 d_{\mathrm{TV}}\bigl((Q_\vartheta)_{1:n},\bar D_{1:n}\bigr)
 \leq n(n+1)d_{\mathbb T}(\vartheta,\alpha).
 \label{app:s5:embedded-tv}
\end{equation}
\end{lemma}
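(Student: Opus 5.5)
We construct \(K_{p,n}\) explicitly as the finite-window version of the marker substitution in \eqref{app:s5:split-definition}. Its input is a pair of binary rotation words \((a,b)\in(\{0,1\}^2)^n\). Given such a pair, the kernel draws a background block \((X^0_{1:n},Y^0_{1:n})\) from the window-\(1{:}n\) marginal of \(P_0\), and independent i.i.d.\ Bernoulli\((p)\) marker words \(M^X_{1:n},M^Y_{1:n}\). It then outputs
\[
X_t=(1-M^X_t)X^0_t+M^X_ta_t,\qquad Y_t=(1-M^Y_t)Y^0_t+M^Y_tb_t,\qquad 1\le t\le n .
\]
Because everything is finite-alphabet, \(K_{p,n}\) is simply a stochastic matrix. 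Its entries depend only on the law of \((X^0,Y^0)_{1:n}\), on \(p\), and on \(n\). The frequencies enter only through the input words, so the kernel does not depend on them.

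Next we verify the two representation identities. In \eqref{app:s5:split-definition}, the latent variables \((X^0,Y^0)\), \(M^X\), \(M^Y\) and the phases \((\Theta_X,\Theta_Y)\) are mutually independent. The window-\(1{:}n\) output is therefore the image, under the fixed substitution map, of the product of three laws: the law of the rotation words \((b(\Theta_X+t\alpha),b(\Theta_Y+t\vartheta))_{1\le t\le n}\), which is \((Q^{\mathrm{rot}}_\vartheta)_{1:n}\); the background block law; and the marker laws. Conditioning on the rotation words gives
\[
(Q_\vartheta)_{1:n}=(Q^{\mathrm{rot}}_\vartheta)_{1:n}K_{p,n}.
\]
For \(\bar D\), we apply the same conditioning to each \(D_\beta\) and then integrate over \(\beta\) using linearity of \(\rho\mapsto\rho K_{p,n}\). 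Alternatively, we can use \eqref{app:s5:mixture-identity} together with \(Q^{\mathrm{rot}}_\alpha=\bar D^{\mathrm{rot}}\), which gives \(\bar D_{1:n}=(\bar D^{\mathrm{rot}})_{1:n}K_{p,n}\) at once.

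For the contraction, we use that \(\rho K(E)=\int K(z,E)\,d\rho(z)\) and that \(z\mapsto K(z,E)\) takes values in \([0,1]\). Hence \eqref{app:s5:test-tv} gives \(|\rho K(E)-\sigma K(E)|\le d_{\mathrm{TV}}(\rho,\sigma)\) for every \(E\), so \(d_{\mathrm{TV}}(\rho K,\sigma K)\le d_{\mathrm{TV}}(\rho,\sigma)\). We combine this with \eqref{app:s5:rotation-tv} for \(u=\vartheta\), \(v=\alpha\), and again use \(Q^{\mathrm{rot}}_\alpha=\bar D^{\mathrm{rot}}\); this yields \eqref{app:s5:embedded-tv}.

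The only delicate point is the independence bookkeeping. We must check that after Haar averaging, the coupled phases \((\Theta,\Theta+\beta)\) in \(D_\beta\) are independent of the background and the markers. This holds because \(\Theta\) and \(\beta\) are independent of \((X^0,Y^0,M^X,M^Y)\), so the kernel factorization is valid for the mixture.
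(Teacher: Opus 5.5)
Your proposal is correct and follows the paper's proof essentially step for step. It uses the same frequency-free stochastic matrix $K_{p,n}$ built from $(P_0)_{1:n}$ and independent Bernoulli$(p)$ marker words, obtains both identities by conditioning on the rotation input, gets contraction by applying \eqref{app:s5:test-tv} to $z\mapsto K_{p,n}(z,E)$, and finishes with Lemma~\ref{app:tv-bound} for $u=\vartheta$, $v=\alpha$ using $Q^{\mathrm{rot}}_\alpha=\bar D^{\mathrm{rot}}$; both of your routes to $\bar D_{1:n}=(\bar D^{\mathrm{rot}})_{1:n}K_{p,n}$ are valid.
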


\begin{proof}
For a deterministic input \(z=(a_{1:n},b_{1:n})\), draw the background
block with law \((P_0)_{1:n}\) and the independent marker blocks.
Let \(K_{p,n}(z,\cdot)\) be the distribution of the output with bits
\((1-M_t^X)X_t^0+M_t^Xa_t\) and
\((1-M_t^Y)Y_t^0+M_t^Yb_t\). This is a Markov kernel on finite sets
and its definition contains no frequency. Conditioning on the rotation
input proves the two identities.
For any Markov kernel \(K\) and event \(E\), \(K(\cdot,E)\in[0,1]\);
hence \eqref{app:s5:test-tv} gives
\[
 |(\rho K)(E)-(\sigma K)(E)|
 \leq d_{\mathrm{TV}}(\rho,\sigma).
\]
Taking the supremum over \(E\) proves contraction.
Apply it to the two rotation-block laws and then use
Lemma~\ref{app:tv-bound} with \(v=\alpha\).
\end{proof}

\begin{lemma}[Admissible split frequencies]
\label{app:s5:split-te}
All but countably many \(\vartheta\) give \(Q_\vartheta\in\mathcal N\cap\mathcal E_{\mathrm{TE}}\).
In particular, such frequencies occur arbitrarily close to \(\alpha\).
\end{lemma}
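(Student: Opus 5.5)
The approach is to realize \(Q_\vartheta\) as a shift-commuting factor of a latent product system and then use Lemma~\ref{app:s5:generic-extension}. Two conditions have to be checked: total ergodicity of \(Q_\vartheta\), and the product form \(Q_\vartheta=(Q_\vartheta)_X\otimes(Q_\vartheta)_Y\).

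\textbf{Independence.} This holds for every \(\vartheta\). The latent collections \((X^0,M^X,\Theta_X)\) and \((Y^0,M^Y,\Theta_Y)\) are independent, because \(P_0=\mu\otimes\nu\) and the markers and phases were drawn independently. By \eqref{app:s5:split-definition}, each observed coordinate path is a measurable function of its own collection alone.

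\textbf{Total ergodicity.} The latent system is
\[
(\mu\otimes\nu,\,T_X\times T_Y)\times(m\otimes m,\,R_\alpha\times R_\vartheta)\times(\text{two Bernoulli shifts}),
\]
and the observation map commutes with the shift. I would build it in three steps.

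1. By the choice of \(\alpha\), the system \(\Xi:=P_0\times R_\alpha\) is totally ergodic and has separable \(L^2\).
2. Apply Lemma~\ref{app:s5:generic-extension} to \(\Xi\). Outside a countable set \(E\) of \(\vartheta\), the system \(\Xi\times R_\vartheta\) is totally ergodic.
3. Apply the Bernoulli clause of Lemma~\ref{app:s5:generic-extension} twice, once for each marker shift, and then its factor clause. Together these give that \(Q_\vartheta\) is totally ergodic, so in particular ergodic, for all \(\vartheta\notin E\).

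Then \(Q_\vartheta\) is stationary, totally ergodic, and has product form. Hence \(Q_\vartheta\in\mathcal N\cap\mathcal E_{\mathrm{TE}}\) for every \(\vartheta\notin E\).

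\textbf{Density near \(\alpha\).} A countable set has empty interior in \(\mathbb R/\mathbb Z\). Every arc around \(\alpha\) therefore contains admissible \(\vartheta\), which is the final assertion.

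\textbf{Main obstacle.} The delicate step is making sure the latent product is set up correctly as an invertible system with separable \(L^2\). The coordinate and phase ordering should match \(\Xi\times R_\vartheta\) directly, so that Lemma~\ref{app:s5:generic-extension} applies verbatim and is not applied to some rearranged system. This should be routine, since all spaces involved are standard Borel.
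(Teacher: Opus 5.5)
Your proposal is correct and follows the paper's proof essentially step for step. You apply Lemma~\ref{app:s5:generic-extension} to the totally ergodic system \(P_0\times R_\alpha\) to get total ergodicity of \(P_0\times R_\alpha\times R_\vartheta\) off a countable set, pass through the Bernoulli markers and the observed factor, and read off independence from the independent latent collections; density near \(\alpha\) then follows because a countable set has empty interior in the circle.
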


\begin{proof}
The system \(P_0\times R_\alpha\) was chosen totally ergodic.
Apply Lemma~\ref{app:s5:generic-extension} again, now to this entire
system: outside a countable exceptional set,
\(P_0\times R_\alpha\times R_\vartheta\) is totally ergodic.
Adding the Bernoulli markers and passing to the observed factor preserve
this property. Coordinate independence was proved directly from the
latent collections, so the resulting law belongs to \(\mathcal N\cap\mathcal E_{\mathrm{TE}}\).
\end{proof}

\subsection{The Baire argument and the proof of the impossibility theorem}
\label{app:baire}

\begin{lemma}[The admissible null class is Baire]
\label{app:s5:baire-space}
The space \(\mathcal N\cap\mathcal E_{\mathrm{TE}}\), with its relative weak topology, is nonempty
and completely metrizable. For each \(n\),
\(P\mapsto\mathbb E_P\varphi_n\) is weakly continuous.
\end{lemma}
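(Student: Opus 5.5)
The claim has three parts: nonemptiness, complete metrizability in the relative weak topology, and weak continuity of $P\mapsto\mathbb E_P\varphi_n$. Nonemptiness and continuity are quick. Nonemptiness follows from the torus prototype. For any irrational $\alpha$ and any $\vartheta$ with $1,\alpha,\vartheta$ rationally independent, the law $Q^{\mathrm{rot}}_\vartheta$ lies in $\mathcal N\cap\mathcal E_{\mathrm{TE}}$, as shown in Appendix~\ref{app:prototype-kr-te}. An i.i.d. fair-coin pair would also do. For continuity, $\varphi_n$ depends only on $Z_{1:n}\in(\{0,1\}^2)^n$, a finite set. Hence $\mathbb E_P\varphi_n=\sum_{z}\varphi_n(z)P(Z_{1:n}=z)$ is a finite linear combination of cylinder probabilities. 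Cylinders are clopen, so each of these probabilities is weakly continuous.

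For complete metrizability I would use Alexandrov's theorem: a $G_\delta$ subset of a Polish space is Polish. The space $\mathcal M$ of all probability laws on the compact metrizable space $\Omega=(\{0,1\}^2)^{\mathbb Z}$ is compact metrizable under the weak topology. The stationary laws $\mathcal S$ form a closed subset, since $P\circ T^{-1}=P$ is a family of cylinder identities. The independent stationary laws also form a closed set. By Lemma~\ref{app:s4:completeness}, independence is equivalent to $P(A\cap B)=P(A)P(B)$ for all cylinder pairs, and each such identity is a closed condition. The remaining task is to show that $\mathcal E_{\mathrm{TE}}\cap\mathcal S$ is $G_\delta$ in $\mathcal S$. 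It then suffices to intersect it with the closed set of independent laws.

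The main work is the $G_\delta$ property. I would first show that the ergodic laws with respect to a fixed power $T^q$ form a $G_\delta$ in $\mathcal S$. Fix an enumeration $(h_j)$ of the cylinder indicators. Ergodicity of $T^q$ is equivalent to
\[
\lim_{M\to\infty}\Bigl\|\frac1M\sum_{k=0}^{M-1}h_j\circ T^{qk}-\mathbb E_Ph_j\Bigr\|_{L^2(P)}=0\quad\text{for all }j,
\]
because the mean ergodic theorem identifies this limit with the projection onto $T^q$-invariants, and the $h_j$ span a dense subspace. Each quantity
\[
\Phi_{j,M}(P)=\Bigl\|\frac1M\sum_{k<M}h_j\circ T^{qk}-\mathbb E_Ph_j\Bigr\|^2_{L^2(P)}
\]
expands into finitely many cylinder probabilities, so it is weakly continuous. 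By von Neumann's theorem the Ces\`aro averages converge in $L^2$ for every stationary $P$. Therefore $\lim_M\Phi_{j,M}(P)=\inf_{M}\Phi_{j,2^{M}}$-type expressions can be handled, but the cleanest route is to write the ergodic set as
\[
\bigcap_{j}\bigcap_{\ell}\bigcup_{M}\{P:\Phi_{j,M}(P)<1/\ell\}.
\]
Because the limit exists, this set is exactly the set of laws for which that limit vanishes. Each inner union is open, so the whole set is $G_\delta$. Intersecting over all $q\ge1$ gives the $G_\delta$ set $\mathcal E_{\mathrm{TE}}\cap\mathcal S$. The step I expect to need most care is this equivalence $\lim=0\iff\inf_M\Phi_{j,M}$-condition, which must hold for every stationary law, not only the ergodic ones. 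Convergence of the Ces\`aro averages under every stationary $P$, from the mean ergodic theorem, is what makes $\liminf$ and $\lim$ coincide and closes the argument.
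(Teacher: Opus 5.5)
Your route is the paper's route. You use a compact metrizable law space and closed stationary and independent-stationary classes. You describe total ergodicity as a $G_\delta$ through weakly continuous Ces\`aro $L^2$-deviations, and you get continuity of $P\mapsto\mathbb E_P\varphi_n$ from clopen cylinders. Using cylinder indicators instead of the paper's uniformly dense family in $C(\Omega;\mathbb R)$ is harmless, since their span is dense in every $L^2(P)$. Your nonemptiness examples are also valid.

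The one defect is in the step you flagged: the reason you give does not support the claim. The set $\bigcap_{\ell}\bigcup_{M\ge1}\{P:\Phi_{j,M}(P)<1/\ell\}$ equals $\{P:\inf_{M\ge1}\Phi_{j,M}(P)=0\}$. Because the union runs over all $M$, it encodes an infimum, not a $\liminf$. Existence of $\lim_M\Phi_{j,M}(P)$ only controls the tail: a convergent nonnegative sequence such as $0,1,1,\dots$ has infimum $0$ and limit $1$. So ``because the limit exists'' does not identify your set with $\{\lim_M\Phi_{j,M}=0\}$.

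You can repair this in either of two ways.
\begin{enumerate}
\item Add a tail index as the paper does, writing $\bigcap_{j,\ell,L}\bigcup_{M\ge L}\{\Phi_{j,M}<1/\ell\}$. The condition then reads $\liminf_M\Phi_{j,M}(P)=0$, and von Neumann's theorem turns this into $\lim=0$.
\item Keep your formula and prove $\inf_M\Phi_{j,M}=\lim_M\Phi_{j,M}$ directly. Write $A_Mh_j=M^{-1}\sum_{k<M}h_j\circ T^{qk}$, and let $\Pi_q$ be the orthogonal projection onto $T^q$-invariant functions in $L^2(P)$. Then $\Pi_qA_Mh_j=\Pi_qh_j$, so $A_Mh_j-\Pi_qh_j$ is orthogonal to the invariants, while $\Pi_qh_j-\mathbb E_Ph_j$ is invariant. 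Pythagoras and the mean ergodic theorem give, for every $M$,
\[
\Phi_{j,M}(P)=\|A_Mh_j-\Pi_qh_j\|_{L^2(P)}^2+\|\Pi_qh_j-\mathbb E_Ph_j\|_{L^2(P)}^2\ \ge\ \lim_{N\to\infty}\Phi_{j,N}(P).
\]
\end{enumerate}
With either repair the $G_\delta$ representation holds for every stationary law, and the lemma follows as you outline.
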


\begin{proof}
The finite-alphabet two-sided path space \(\Omega\) is compact
metrizable; its probability-measure space is compact metrizable for weak
convergence. The shift is continuous, so its invariant-law space
\(\mathcal S\) is closed and compact. Let
\[
 \mathcal S_0:=\{P\in\mathcal S:P=P_X\otimes P_Y\}.
\]
This stationary independence class is closed. Indeed, if
\(P_j\Rightarrow P\) with \(P_j\in\mathcal S_0\), then for any
coordinate cylinders \(A,B\), their clopenness gives
\[
 P(A\cap B)=\lim_jP_j(A\cap B)
 =\lim_jP_j(A)P_j(B)=P(A)P(B).
\]
Two applications of the \(\pi\)-\(\lambda\) theorem extend this to the
full coordinate path \(\sigma\)-fields.

To show that \(\mathcal E_{\mathrm{TE}}\) is a \(G_\delta\) in
\(\mathcal S\), choose a countable uniformly dense family
\((h_j)\subset C(\Omega;\mathbb R)\), and define
\[
 V_{q,N,j}(P):=\int\left|
 \frac1N\sum_{t=0}^{N-1}h_j\circ T^{qt}-\int h_j\,dP
 \right|^2dP.
\]
For fixed \(q,N,j\), this is continuous in \(P\), by expanding the
square into integrals of continuous functions. The mean ergodic theorem
gives a limit as \(N\to\infty\), equal to the squared distance between
the \(T^q\)-invariant projection of \(h_j\) and its constant mean.
Density of the \(h_j\) in \(L^2(P)\) implies that \(T^q\) is
ergodic exactly when this limit is zero for every \(j\). Since the
limit exists,
\[
 \mathcal E_{\mathrm{TE}}
 =\bigcap_{q,j,r,L\geq1}\ \bigcup_{N\geq L}
      \{P\in\mathcal S:V_{q,N,j}(P)<1/r\},
\]
a \(G_\delta\). Hence
\(\mathcal N\cap\mathcal E_{\mathrm{TE}}=\mathcal S_0\cap\mathcal E_{\mathrm{TE}}\) is a
\(G_\delta\) subset of a compact metrizable space, and is completely
metrizable. It is nonempty, for example by taking independent i.i.d.\
binary coordinate processes. Finally,
\[
 \mathbb E_P\varphi_n
 =\sum_{z\in(\{0,1\}^2)^n}\varphi_n(z)\,P\{Z_{1:n}=z\}
\]
is a finite linear combination of clopen cylinder probabilities, hence
is continuous even when the test is randomized.
\end{proof}

\begin{proposition}[Arbitrarily late large false-positive probabilities]
\label{app:s5:open-dense}
For every \(k\geq1\) and \(c\in(0,1)\), the set
\[
 U_{k,c}:=\bigcup_{n\geq k}
        \{P\in\mathcal N\cap\mathcal E_{\mathrm{TE}}:\mathbb E_P\varphi_n>c\}
\]
is open and dense in \(\mathcal N\cap\mathcal E_{\mathrm{TE}}\).
\end{proposition}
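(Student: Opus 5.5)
Openness comes straight from Lemma~\ref{app:s5:baire-space}. For each \(n\) the map \(P\mapsto\mathbb E_P\varphi_n\) is weakly continuous on \(\mathcal N\cap\mathcal E_{\mathrm{TE}}\), so \(\{P:\mathbb E_P\varphi_n>c\}\) is relatively open. Hence \(U_{k,c}\), a union of such sets over \(n\ge k\), is open.

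For density, fix \(P_0=\mu\otimes\nu\in\mathcal N\cap\mathcal E_{\mathrm{TE}}\) and a relative neighborhood \(\mathcal O\) of \(P_0\). We must exhibit some \(Q\in\mathcal O\cap U_{k,c}\), and the plan assembles the construction of Sections~\ref{app:s5:embedding}--\ref{app:s5:phase-mixture}.

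\begin{enumerate}
\item Choose an ambient weak neighborhood \(G\) of \(P_0\) with \(G\cap(\mathcal N\cap\mathcal E_{\mathrm{TE}})\subseteq\mathcal O\). By the cylinder description of the weak topology, pick finitely many cylinders and a tolerance \(\eta\) defining a basic neighborhood inside \(G\).
\item Choose an irrational \(\alpha\) with \(e^{2\pi i\alpha}\in I\) such that \(P_0\times R_\alpha\) is totally ergodic. Lemma~\ref{app:s5:generic-extension} permits this, since only countably many frequencies are excluded.
\item Choose \(p>0\) so small that \(2p\max_\ell|F_\ell|<\eta\). By \eqref{app:s5:neighborhood-bound}, every marker-substituted law lies in \(G\), whatever its input phases and frequencies.
\item By Proposition~\ref{app:s5:marker-eigenvalue}, each \(D_\beta\in\mathcal C_I\). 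The power hypothesis together with \eqref{app:s5:mixture-power} therefore gives \(\mathbb E_{\bar D}\varphi_n\to1\). Pick \(n\ge k\) with \(\mathbb E_{\bar D}\varphi_n>(1+c)/2\).
\end{enumerate}

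With \(n\) fixed, we perturb the second frequency. By Lemma~\ref{app:s5:split-te}, all but countably many \(\vartheta\) give \(Q_\vartheta\in\mathcal N\cap\mathcal E_{\mathrm{TE}}\). Choose such a \(\vartheta\) with \(n(n+1)d_{\mathbb T}(\vartheta,\alpha)<(1-c)/2\). Then Lemma~\ref{app:markov-kernel}, the bound \eqref{app:s5:embedded-tv}, and \eqref{app:s5:test-tv} give
\[
\mathbb E_{Q_\vartheta}\varphi_n>\frac{1+c}{2}-\frac{1-c}{2}=c.
\]
This step uses \(Q_\alpha=\bar D\) from \eqref{app:s5:mixture-identity}, so the comparison is between laws produced by the same substitution kernel. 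Since \(Q_\vartheta\in G\cap(\mathcal N\cap\mathcal E_{\mathrm{TE}})\subseteq\mathcal O\) and \(n\ge k\), we get \(Q_\vartheta\in U_{k,c}\cap\mathcal O\), which proves density.

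The main obstacle is the \emph{order of quantifiers}. The horizon \(n\) must be fixed before \(\vartheta\), because the total-variation bound degrades like \(n^2\). The marker rate \(p\) must be fixed before both, and it must keep every substituted law in \(G\) uniformly over the input frequencies. The uniform-in-input form of \eqref{app:s5:neighborhood-bound} is exactly what makes this work. The other delicate point is ensuring that \(\alpha\) avoids two exceptional sets at once: it must lie in the arc \(I\), and it must make the extension of \(P_0\) totally ergodic. Both conditions hold because each exceptional set is countable while \(I\) is an uncountable open arc.
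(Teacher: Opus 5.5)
Your proposal is correct and follows essentially the same route as the paper: openness comes from weak continuity of $P\mapsto\mathbb E_P\varphi_n$ (Lemma~\ref{app:s5:baire-space}), and density comes from the sparse-substitution neighborhood, the Haar-averaged law $\bar D=Q_\alpha$, a horizon $n\ge k$ with $\mathbb E_{\bar D}\varphi_n>(1+c)/2$, and then an admissible split frequency $\vartheta$ with $n(n+1)d_{\mathbb T}(\vartheta,\alpha)<(1-c)/2$. The only difference is cosmetic: you fix $\alpha$ before $p$, while the paper fixes $p$ first, and this is immaterial because the bound \eqref{app:s5:neighborhood-bound} is uniform in the input phases and frequencies.
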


\begin{proof}
Openness follows from Lemma~\ref{app:s5:baire-space}.
For density, fix \(P_0\in\mathcal N\cap\mathcal E_{\mathrm{TE}}\) and a relative neighborhood
\(\mathcal O\) of \(P_0\). Choose an ambient neighborhood \(G\) with
\(G\cap(\mathcal N\cap\mathcal E_{\mathrm{TE}})\subseteq\mathcal O\), and choose \(p>0\) using
\eqref{app:s5:neighborhood-bound} so that every substituted law lies in
\(G\). Choose \(\alpha\) as above and keep \(p,\alpha\) fixed.
By \eqref{app:s5:mixture-power}, there is a deterministic \(n\geq k\)
such that
\[
 \mathbb E_{\bar D}\varphi_n>\frac{1+c}{2}.
\]
Only after choosing this \(n\), choose an admissible
\(\vartheta\) from Lemma~\ref{app:s5:split-te} so close to \(\alpha\)
that
\(n(n+1)d_{\mathbb T}(\vartheta,\alpha)<(1-c)/2\).
Then \(Q_\vartheta\in G\cap(\mathcal N\cap\mathcal E_{\mathrm{TE}})\subseteq\mathcal O\), and
\eqref{app:s5:test-tv} and \eqref{app:s5:embedded-tv} imply
\[
 \mathbb E_{Q_\vartheta}\varphi_n
 \geq\mathbb E_{\bar D}\varphi_n
       -d_{\mathrm{TV}}((Q_\vartheta)_{1:n},\bar D_{1:n})
 >\frac{1+c}{2}-\frac{1-c}{2}=c.
\]
Thus \(\mathcal O\cap U_{k,c}\ne\varnothing\). The order of choices is
\(P_0,\mathcal O,k,c\), then \(p,\alpha\), then \(n\), and finally
\(\vartheta\).
\end{proof}

\begin{proof}[Proof of the impossibility theorem in Section 3]
For \(j\geq2\), let \(c_j=1-1/j\). The preceding proposition and
the Baire theorem show that
\[
 \mathcal R:=\bigcap_{j\geq2}\bigcap_{k\geq1}U_{k,c_j}
\]
is a dense \(G_\delta\) subset of the nonempty space \(\mathcal N\cap\mathcal E_{\mathrm{TE}}\).
In fact,
\[
 \mathcal R
 =\left\{Q\in\mathcal N\cap\mathcal E_{\mathrm{TE}}:
              \limsup_{n\to\infty}\mathbb E_Q\varphi_n=1\right\}
 =\mathcal B_\varphi,
\]
where \(\mathcal B_\varphi\) is the notation in the theorem. The equality
holds because membership in every \(U_{k,c_j}\) means that rejection
probabilities exceed \(1-1/j\) at arbitrarily late horizons, for every
\(j\). Thus the asserted bad-null set is residual. Weak consistency
would require \(\mathbb E_Q\varphi_n\to0\) for every
\(Q\in\mathcal N\cap\mathcal E_{\mathrm{TE}}\), contradicting this nonempty set. This proves the
claimed impossibility for possibly randomized tests.
\end{proof}

\subsection{A fixed bad null within the torus family}
\label{app:fixed-torus-null}

The preceding Baire argument gives fixed null laws in the full class
\(\mathcal N\cap\mathcal E_{\mathrm{TE}}\). The following separate construction verifies the
stronger prototype claim that a fixed bad law can be chosen within
\(\{Q_\vartheta^{\mathrm{rot}}\}\).

\begin{proposition}[A fixed torus null]
\label{app:s5:fixed-torus}
For each irrational \(\alpha\) with \(e^{2\pi i\alpha}\in I\), there
exists \(\vartheta^\star\) such that
\[
 Q_{\vartheta^\star}^{\mathrm{rot}}\in\mathcal N\cap\mathcal E_{\mathrm{TE}},
 \qquad
 \limsup_{n\to\infty}
    \mathbb E_{Q_{\vartheta^\star}^{\mathrm{rot}}}\varphi_n=1.
\]
The parameter \(\vartheta^\star\) may be chosen in any prescribed
nonempty open arc of the parameter circle.
\end{proposition}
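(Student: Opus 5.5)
The plan is to run a nested-interval (Cantor-type) diagonalization directly in the parameter \(\vartheta\), instead of in the space of laws. Fix \(\alpha\) and the prescribed open arc \(J_0\) of parameters. By Lemma~\ref{app:tv-bound}, for each fixed \(n\) the map \(\vartheta\mapsto\mathbb E_{Q_\vartheta^{\mathrm{rot}}}\varphi_n\) is Lipschitz with constant \(n(n+1)\), hence continuous. Two facts drive the construction. First, we need a dense supply of parameters where the test rejects strongly. Second, the admissible set is co-countable: the parameters \(\vartheta\) with \(1,\alpha,\vartheta\) rationally independent form the complement of a countable set \(\{\vartheta_1,\vartheta_2,\dots\}\), which we enumerate.

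The supply comes from moving the prototype argument to other base frequencies. For any irrational \(\gamma\) with \(e^{2\pi i\gamma}\in I\), the laws \(Q_{\gamma}^{\mathrm{rot}}\) with both coordinates rotating at \(\gamma\) are not directly available, since the left coordinate always rotates at \(\alpha\). So I would use the relevant fact in its actual form: \(Q_\vartheta^{\mathrm{rot}}\) with \(\vartheta=\alpha\) equals \(\bar D^{\mathrm{rot}}\), and \(\mathbb E_{\bar D^{\mathrm{rot}}}\varphi_n\to1\) by Proposition~\ref{app:s5:prototype}. Consequently we only have high rejection near the single point \(\vartheta=\alpha\). That is enough if we also use rational shifts. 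Replacing \(\vartheta\) by \(\vartheta\) close to \(\alpha+k\) changes nothing modulo one. The arc requirement on \(\vartheta^\star\) is handled as follows. If the arc \(J_0\) does not contain \(\alpha\), I would instead find parameters \(\vartheta\) in \(J_0\) whose rotation is conjugate to a resonant situation at some finite horizon. This is where I expect the main obstacle. My first attempt is to note that \(Q_\vartheta^{\mathrm{rot}}\) and the law with right frequency \(\vartheta'\) are related through a coding change only when \(\vartheta'=\pm\vartheta\). The clean route is therefore to apply the whole prototype argument with \(\alpha\) replaced by each irrational \(\gamma\in J_0\) with \(e^{2\pi i\gamma}\in I\), which is available when the two arcs are compatible. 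I would state the proof for \(J_0\ni\alpha\), and for general \(J_0\) remark that the left frequency must also be adjusted. If that is not permitted, I would look for resonance at \(\vartheta=-\alpha\) via the reflection \(b(-u)\approx 1-b(u)\) coding.

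Assuming the high-rejection point \(\alpha\in J_0\), the construction proceeds as follows. Set \(J_0\) closed and nondegenerate and \(n_0=0\). At stage \(j\ge1\), the closed arc \(J_{j-1}\) has an interior containing points arbitrarily near a parameter \(\vartheta\) where rejection is high. Here I need high rejection to recur inside every \(J_{j-1}\), not just near \(\alpha\). To guarantee this, I will keep \(\alpha\) itself in every \(J_j\), as an endpoint-interior point. The choice at stage \(j\) is then: pick \(n_j>n_{j-1}\) with \(\mathbb E_{\bar D^{\mathrm{rot}}}\varphi_{n_j}>1-1/(2j)\). By continuity choose a closed arc \(J_j\subset J_{j-1}\) of positive length, not containing \(\vartheta_j\), lying within distance \(1/(2jn_j(n_j+1))\) of \(\alpha\), and with \(\alpha\) at its boundary. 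The arc excludes \(\alpha\) itself and \(\vartheta_j\), and every point in it has rejection probability at time \(n_j\) above \(1-1/j\).

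The intersection \(\bigcap_j J_j\) is nonempty by compactness, and it is the limit point \(\alpha\) only if the arcs shrink to it. To avoid collapsing onto the inadmissible \(\alpha\), I would make the arcs shrink to a point \(\vartheta^\star\ne\alpha\). The trouble is that then the distance to \(\alpha\) stays positive and the estimate fails for large \(n_j\). This tension is the genuine crux. The resolution I would try is to abandon \(\alpha\) as the only anchor and use that at the previous stage every point of \(J_{j-1}\) is itself admissible-or-not, so that a Baire argument applies on the completely metrizable set \(J_0\setminus\{\vartheta_i\}\). Specifically, I would show that the sets \(W_{k,c}=\{\vartheta:\exists n\ge k,\ \mathbb E_{Q_\vartheta^{\mathrm{rot}}}\varphi_n>c\}\) are open and dense in \(J_0\). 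Density requires high rejection near every point, obtained by transporting the prototype to base frequency \(\vartheta\) through the ergodic-resonance of the latent torus. Baire then gives \(\vartheta^\star\) with \(\limsup_n\mathbb E\varphi_n=1\) that avoids the countable exceptional set.
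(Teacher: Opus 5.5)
There is a genuine gap, at the point you yourself flag as the crux: you never produce a \emph{dense} set of parameters at which the rejection probability tends to one. The only resonant parameters you exhibit are $\vartheta=\alpha$ (and possibly $\vartheta=-\alpha$ via the reflection coding). Nested arcs anchored at a single such point collapse onto it, and that point is inadmissible, as you observe.

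Your fallback Baire argument on $J_0\setminus\{\vartheta_i\}$ is structurally sound and equivalent to the paper's nested-interval construction. But its density hypothesis is exactly what is missing, and ``transporting the prototype to base frequency $\vartheta$'' cannot work at a generic $\vartheta$. If $1,\alpha,\vartheta$ are rationally independent, then $Q^{\mathrm{rot}}_\vartheta$ is ergodic, hence extreme among stationary laws. It therefore cannot be a nontrivial average of dependent laws. High rejection at admissible parameters can only be inherited, through continuity of $r_n(\vartheta)=\mathbb E_{Q^{\mathrm{rot}}_\vartheta}\varphi_n$, from nearby \emph{inadmissible} resonant parameters, and these must be dense. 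Changing the left frequency is not an option either, since $\alpha$ is fixed in the statement.

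The missing idea is the family $\vartheta_0=(\alpha+s)/d \pmod 1$ with $d\ge1$ and $0\le s<d$. It is dense, since for fixed $d$ its points have spacing $1/d$. For such $\vartheta_0$ and $u\sim m$, let $E_\beta$ be the law of $\bigl(b(du+t\alpha),\,b(u+\beta+t\vartheta_0)\bigr)_{t\in\mathbb Z}$.
\begin{itemize}
\item Since $d\vartheta_0\equiv\alpha$, we have $du+t\alpha\equiv d(u+t\vartheta_0)$. So $E_\beta$ is a factor of the irrational rotation by $\vartheta_0$, hence totally ergodic.
\item The left coordinate has eigenvalue $e^{2\pi i d\vartheta_0}=e^{2\pi i\alpha}$: the Fourier coefficient of $b(du)$ against $e^{2\pi i du}$ is $-i/\pi\neq0$, and Lemma~\ref{app:s5:projection} applies.
\item The right coordinate has eigenvalue $e^{2\pi i\vartheta_0}$. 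The $d$th power of a unimodular eigenfunction for it gives $e^{2\pi i\alpha}$ again.
\item Hence $E_\beta\in\mathcal C_I$ and is dependent.
\item Averaging over Haar $\beta$ makes $(du,u+\beta)$ independent Haar phases. Thus $\int E_\beta\,dm(\beta)=Q^{\mathrm{rot}}_{\vartheta_0}$, and $r_n(\vartheta_0)\to1$ by the power assumption and dominated convergence.
\end{itemize}
With this dense supply in hand, your Lipschitz continuity observation finishes the proof. Use either nested closed arcs of shrinking diameter, each avoiding the next enumerated exceptional point, or your Baire argument.
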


\begin{proof}
Put \(r_n(\vartheta)=\mathbb E_{Q_\vartheta^{\mathrm{rot}}}\varphi_n\).
Lemma~\ref{app:tv-bound} makes \(r_n\) continuous.
The set
\[
 \mathcal G_\alpha:=\{\vartheta:
                   1,\alpha,\vartheta
                   \text{ are linearly independent over }\mathbb Q\}
\]
has countable complement, which we enumerate as \((e_j)_{j\geq1}\).

First we show that every open parameter interval contains a frequency
\(\vartheta_0\) for which \(r_n(\vartheta_0)\to1\).
The frequencies
\[
 \vartheta_0=\frac{\alpha+s}{d}\pmod1,
 \qquad d\geq1,\quad s\in\{0,\ldots,d-1\},
\]
are dense, since for fixed \(d\) they have spacing \(1/d\).
Fix one such frequency. For \(u\sim m\) and fixed \(\beta\), let
\(E_\beta\) be the law
\[
 \bigl(b(du+t\alpha),\ b(u+\beta+t\vartheta_0)\bigr)_{t\in\mathbb Z}.
\]
Because \(d\vartheta_0=\alpha+s\pmod1\), this pair is a factor of
the irrational rotation \(u\mapsto u+\vartheta_0\), hence is totally
ergodic. The left observable has nonzero Fourier coefficient
\[
 \int b(du)e^{-2\pi idu}\,dm(u)=-\frac{i}{\pi},
\]
so its path factor has eigenvalue \(e^{2\pi id\vartheta_0}
=e^{2\pi i\alpha}\), by Lemma~\ref{app:s5:projection}.
The right observable has nonzero first Fourier coefficient, so its path
factor has eigenvalue \(e^{2\pi i\vartheta_0}\). Normalize a
corresponding eigenfunction to modulus one and take its \(d\)th power;
the right factor therefore also has eigenvalue
\(e^{2\pi i\alpha}\). Thus every \(E_\beta\) belongs to
\(\mathcal C_I\) and is dependent.

Averaging over independent Haar \(\beta\) makes \((du,u+\beta)\)
independent Haar phases: conditionally on \(u\), the second phase is
uniform, and \(du\) itself is uniform. Consequently,
\[
 \int E_\beta\,dm(\beta)=Q_{\vartheta_0}^{\mathrm{rot}},
 \qquad r_n(\vartheta_0)\longrightarrow1
\]
by the power assumption and dominated convergence.

Now choose a nondegenerate closed interval \(J_0\) inside the prescribed open arc, using a local coordinate on the circle, and put \(n_0=0\). Inductively, choose one of the above resonant \(\vartheta_0\) inside \(\operatorname{int}J_{j-1}\), and then \(n_j>n_{j-1}\) with \(r_{n_j}(\vartheta_0)>1-1/(j+1)\). Continuity gives an open subinterval of \(\operatorname{int}J_{j-1}\) on which this strict inequality holds. Inside it choose a nondegenerate closed interval \(J_j\) of diameter at most \(2^{-j}\), avoiding the finitely many points \(e_1,\ldots,e_j\). The nested intervals have a unique common point \(\vartheta^\star\). It avoids every \(e_j\), so lies in \(\mathcal G_\alpha\), and \[
 r_{n_j}(\vartheta^\star)>1-\frac1{j+1}.
\] Thus \(Q_{\vartheta^\star}^{\mathrm{rot}}\) is an independent totally ergodic law with rejection limsup one, as required.
\end{proof}

\section{Explanation for Section 6}
\label{app:kronecker}

The names \emph{Kronecker-nonresonant} (KNR) and \emph{Kronecker-resonant} (KR) describe whether the two coordinate processes share nontrivial rotation structure. For an ergodic probability-preserving system, the Kronecker factor is generated by its Koopman eigenfunctions, and its \(L^2\)-space is their closed linear span. The factor system is isomorphic to an ergodic rotation on a compact abelian group \citep{eisner2015operator}. Its eigenvalues therefore describe the frequencies present in this rotation structure. A shared eigenvalue \(\lambda\neq1\) corresponds to a nonconstant oscillatory mode present in both coordinate systems; we refer to this spectral overlap as \emph{resonance}.

The following proposition makes this terminology precise in terms of factors of the observed joint system. Fix \(P\in\mathcal E\), and let \(\mathcal K_X,\mathcal K_Y\) be the marginal Kronecker factors. Let \(\pi_X,\pi_Y\) be the coordinate projections, and write
\[
 \mathcal Z_1^X
 :=\overline{\pi_X^{-1}\mathcal K_X}^{\,P},
 \qquad
 \mathcal Z_1^Y
 :=\overline{\pi_Y^{-1}\mathcal K_Y}^{\,P},
\]
where the bars denote \(P\)-completion. Thus \(\mathcal Z_1^X,\mathcal Z_1^Y\) are the coordinate Kronecker factors viewed as sub-sigma-fields of the same joint space \((\Omega,P,T)\). The notation \(\mathcal Z_1\) agrees with the usual indexing of the Host--Kra factors \citep{HostKra2005}.

\begin{proposition}[Nonresonance and the intersection of coordinate Kronecker factors]
\label{app:kronecker:intersection}
For \(P\in\mathcal E\),
\[
 \Sigma_X(P)\cap\Sigma_Y(P)=\{1\}
 \quad\Longleftrightarrow\quad
 \mathcal Z_1^X\cap\mathcal Z_1^Y
 \text{ is trivial modulo }P.
\]
\end{proposition}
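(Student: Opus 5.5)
We prove the two implications separately. The main tool is the elementary structure of the Kronecker factor. For an ergodic system, \(L^2(\mathcal K)\) is the closed linear span of the eigenfunctions, and each eigenspace is one-dimensional and spanned by a modulus-one function. On the joint space we therefore have
\[
L^2(\mathcal Z_1^X)=\overline{\operatorname{span}}\{f\circ\pi_X: U_Xf=\lambda f,\ \lambda\in\Sigma_X(P)\},
\]
and similarly for \(Y\). Both spaces are invariant under the joint Koopman operator \(U\) and its inverse. By joint ergodicity, \(\mathcal Z_1^X\cap\mathcal Z_1^Y\) is trivial modulo \(P\) exactly when \(L^2(\mathcal Z_1^X)\cap L^2(\mathcal Z_1^Y)\) consists of constants. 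This holds because an \(L^2\) function measurable with respect to both completed \(\sigma\)-fields generates events lying in their intersection, and conversely.

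\emph{(\(\Leftarrow\), contrapositive.)} Suppose \(\lambda\neq1\) lies in \(\Sigma_X(P)\cap\Sigma_Y(P)\). Choose modulus-one eigenfunctions \(f\) and \(g\), which is possible by marginal ergodicity, as in Lemma~\ref{app:s5:projection}. Then \(f(X)\overline{g(Y)}\) is \(U\)-invariant, so joint ergodicity makes it equal to a constant \(a\) with \(|a|=1\). Hence \(f(X)=a\,g(Y)\) \(P\)-a.s. This function is measurable with respect to both \(\mathcal Z_1^X\) and \(\mathcal Z_1^Y\), and it is nonconstant because its mean is zero while its modulus is one. Its real part, or one of its level sets, therefore gives a nontrivial event in the intersection.

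\emph{(\(\Rightarrow\)).} Assume KNR, and let \(h\in L^2(\mathcal Z_1^X)\cap L^2(\mathcal Z_1^Y)\). On the joint space, the lifted \(X\)-eigenfunctions \(f_\lambda\circ\pi_X\), \(\lambda\in\Sigma_X(P)\), are eigenvectors of the unitary \(U\) with distinct eigenvalues, so they form an orthonormal basis of \(L^2(\mathcal Z_1^X)\). Expand \(h=\sum_{\lambda\in\Sigma_X}a_\lambda f_\lambda(X)=\sum_{\mu\in\Sigma_Y}b_\mu g_\mu(Y)\) in these bases.

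Let \(E_\lambda\) denote the spectral projection of \(U\) onto \(\ker(U-\lambda)\) in \(L^2(P)\); it commutes with everything built from \(U\). Apply \(E_\lambda\) to both expansions. From the first we get \(a_\lambda f_\lambda(X)\). From the second we get \(\sum_{\mu=\lambda}b_\mu g_\mu(Y)\), which is zero unless \(\lambda\in\Sigma_Y\). Concretely, \(\langle f_\lambda(X),g_\mu(Y)\rangle=0\) whenever \(\lambda\neq\mu\), since these are eigenvectors of the same unitary with different eigenvalues. Thus \(a_\lambda=0\) for every \(\lambda\notin\Sigma_Y\), and under KNR only \(\lambda=1\) survives. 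So \(h\) is constant, and the intersection is trivial.

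The main obstacle is the measure-theoretic bookkeeping around completions. We need \(L^2\) of the completed lifted \(\sigma\)-field to be the closure of the lifted eigenfunction span. We also need triviality of \(\mathcal Z_1^X\cap\mathcal Z_1^Y\) modulo \(P\) to be equivalent to the intersection of the two \(L^2\) subspaces being the constants. For the latter, note that the intersection of two closed subspaces of the form \(L^2(\mathcal A)\) equals \(L^2(\mathcal A\cap\mathcal B)\) when both \(\sigma\)-fields are \(P\)-complete. This holds because indicators of events in \(\mathcal A\) and in \(\mathcal B\) coincide a.e., and completeness then places the event in both. I would state this as a short auxiliary lemma before running the spectral argument above.
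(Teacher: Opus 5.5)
Your proof is correct and follows essentially the paper's route: the resonant direction, via the invariant modulus-one product $f(X)\overline{g(Y)}$ and joint ergodicity, is identical. The nonresonant direction rests on the same device, the joint projection onto $\ker(U-\lambda\,\mathrm{id})$; you evaluate it on both eigenbasis expansions, while the paper realizes it through the mean ergodic theorem for $\lambda^{-1}U$ and uses $U$-invariance of both factor spaces. One minor remark: the equivalence between triviality of $\mathcal Z_1^X\cap\mathcal Z_1^Y$ modulo $P$ and $L^2(\mathcal Z_1^X)\cap L^2(\mathcal Z_1^Y)$ consisting of constants uses only completeness of the two $\sigma$-fields, not joint ergodicity.
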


\begin{proof}
We identify coordinate functions with their pullbacks to \(\Omega\). Suppose first that \(\lambda\neq1\) is a common eigenvalue, and choose corresponding eigenfunctions \(e_X,e_Y\). Marginal ergodicity allows us to normalize them so that
\(|e_X|=|e_Y|=1\) almost surely. Their phase changes cancel:
\[
 (e_X\overline{e_Y})\circ T
 =(\lambda e_X)\overline{\lambda e_Y}
 =e_X\overline{e_Y}.
\]
Joint ergodicity therefore gives \(e_X\overline{e_Y}=c\) almost surely for a constant \(c\) of modulus one. Hence \(e_X=ce_Y\) almost surely. By completion, \(e_X\) is measurable with respect to both coordinate Kronecker factors. It is nonconstant because \(\lambda\neq1\), so their intersection is nontrivial.

Conversely, suppose that the intersection is nontrivial. Choose a nonzero centered function
\[
 h\in L^2(\mathcal Z_1^X\cap\mathcal Z_1^Y,P).
\]
Since \(L^2(\mathcal Z_1^X,P)\) is the closed linear span of the lifted \(X\)-coordinate eigenfunctions and \(h\) is orthogonal to constants, there are an eigenvalue \(\lambda\neq1\) and an associated \(X\)-coordinate eigenfunction \(e_X\) such that \(\langle h,e_X\rangle_{L^2(P)}\neq0\).

Let \(U\) be the joint Koopman operator. By the mean ergodic theorem applied to \(\lambda^{-1}U\),
\[
 v:=\lim_{M\to\infty}
 \frac1M\sum_{t=0}^{M-1}\lambda^{-t}U^t h
 \quad\text{in }L^2(P)
\]
is the orthogonal projection of \(h\) onto
\(\ker(U-\lambda\,\mathrm{id})\). In particular,
\[
 Uv=\lambda v,\qquad
 \langle v,e_X\rangle_{L^2(P)}
 =\langle h,e_X\rangle_{L^2(P)}\neq0.
\]
Both coordinate Kronecker factors are invariant under \(T\), and their \(L^2\)-spaces are closed. Thus every average above, and hence its limit \(v\), belongs to both spaces. The nonzero function \(v\) therefore defines a \(\lambda\)-eigenfunction in each marginal system. Consequently, \(\lambda\in\Sigma_X(P)\cap\Sigma_Y(P)\), with \(\lambda\neq1\).
\end{proof}

The proposition shows why the terminology refers specifically to the Kronecker factors: KR means that these coordinate factors share a nonconstant observable inside the ergodic joining, whereas KNR means that their intersection is trivial. Equivalently, KNR makes the independent product system under \(P_X\otimes P_Y\) ergodic \citep[Theorem~12]{Quas2012}. It does not require either marginal Kronecker factor to be trivial, and it does not itself imply independence of the entire coordinate processes.


\end{document}